\let\save@ps@pprintTitle\ps@pprintTitle
\def\ps@pprintTitle{\save@ps@pprintTitle\gdef\@oddfoot{\footnotesize\itshape \null\hfill\today}}
\def\hlinewd#1{%
  \noalign{\ifnum0=`}\fi\hrule \@height #1%
  \futurelet\reserved@a\@xhline}
\else\usepackage{stmaryrd}\fi
\def\mcWidth#1{\csname TY@F#1\endcsname+\tabcolsep}
\def\cAlignHack{\rightskip\@flushglue\leftskip\@flushglue\parindent\z@\parfillskip\z@skip}
\def\rAlignHack{\rightskip\z@skip\leftskip\@flushglue \parindent\z@\parfillskip\z@skip}
\if@twocolumn\usepackage{dblfloatfix}\fi 
\def\eqalign#1{\null\vcenter{\def\\{\cr}\openup\jot\m@th
  \ialign{\strut$\displaystyle{##}$\hfil&$\displaystyle{{}##}$\hfil
      \crcr#1\crcr}}\,}
\let\lt=<
\let\gt=>
\def\processVert{\ifmmode|\else\textbar\fi}
\def\subparagraph{\@startsection{paragraph}{5}{2\parindent}{0ex plus 0.1ex minus 0.1ex}%
{0ex}{\normalfont\small\itshape}}%
\newcommand\role[1]{\unskip}
\newcommand\aucollab[1]{\unskip}
\def\checkGraphicsWidth{\ifdim\Gin@nat@width>\linewidth
    \tsGraphicsScaleX\linewidth\else\Gin@nat@width\fi}
\def\checkGraphicsHeight{\ifdim\Gin@nat@height>.9\textheight
    \tsGraphicsScaleY\textheight\else\Gin@nat@height\fi}
\def\fixFloatSize#1{}
\let\ts@includegraphics\includegraphics
\def\inlinegraphic[#1]#2{{\edef\@tempa{#1}\edef\baseline@shift{\ifx\@tempa\@empty0\else#1\fi}\edef\tempZ{\the\numexpr(\numexpr(\baseline@shift*\f@size/100))}\protect\raisebox{\tempZ pt}{\ts@includegraphics{#2}}}}
\def\URL#1#2{\@ifundefined{href}{#2}{\href{#1}{#2}}}
\def\UrlOrds{\do\*\do\-\do\~\do\'\do\"\do\-}%
\g@addto@macro{\UrlBreaks}{\UrlOrds}
\journal{Journal Name}
\newcommand{\ii}[0]{\mathrm{i}}
\newcommand{\RR}[0]{\mathbb{R}}
\newcommand{\bx}[0]{\mathbf{x}}
\newcommand{\bw}[0]{\mathbf{w}}
\newcommand{\bu}[0]{\mathbf{u}}
\newcommand{\bA}[0]{\mathbf{A}}
\newcommand{\bbf}[0]{\mathbf{f}}
\newcommand{\lib}[1]{\textcolor{blue}{\section{#1}}}
\theoremstyle{definition}
\newtheorem{definition}{Definition}[section]
\newcommand{\by}[0]{\mathbf{y}}
\newtheorem{lemma}{Lemma}
\newtheorem{theorem}{Theorem}
\newtheorem{remark}{Remark}
\begin{document}

\begin{frontmatter}

\title{Efficient Numerical Method for Models Driven by L\'evy Process via Hierarchical Matrices}
\author[label1]{Kailai Xu}
 \ead{kailaix@stanford.edu}
 \address[label1]{Institute for Computational and Mathematical Engineering, Stanford University, Stanford, CA, 94305}
%
 \author[label2]{Eric Darve}
 \ead{darve@stanford.edu}
 \address[label2]{Mechanical Engineering, Stanford University, Stanford, CA, 94305}

\begin{abstract}
     Modeling via fractional partial differential equations or a L\'evy process has been an active area of research and has many applications. However, the lack of efficient numerical computation methods for general nonlocal operators impedes people from adopting such modeling tools. We proposed an efficient solver for the convection-diffusion equation whose operator is the infinitesimal generator of a L\'evy process based on $\mathcal{H}$-matrix technique. The proposed Crank Nicolson scheme is unconditionally stable and has a theoretical $\mathcal{O}(h^2+\Delta t^2)$ convergence rate. The $\mathcal{H}$-matrix technique has theoretical $\mathcal{O}(N)$ space and computational complexity compared to $\mathcal{O}(N^2)$ and $\mathcal{O}(N^3)$ respectively for the direct method. Numerical experiments demonstrate the efficiency of the new algorithm. 
\end{abstract}

    \begin{keyword}
L\'evy Process \sep Hierarchical Matrices \sep Fractional Partial Differential Equation


\end{keyword}

\end{frontmatter}

\lib{Introduction}
Over the last years anomalous diffusion or nonlocal modeling have seen a tremendous increase in popularity in many fields. Of particular interest is the fractional partial differential equations~(FPDE) arising from many disciplines such as image processing~\cite{gatto2015numerical,unser2009multiresolution}, finance~\cite{scalas2000fractional}, stochastic dynamics~\cite{bogdan2003censored}, fractional kinetics and anomalous transport~\cite{zaslavsky2002chaos}, fractal conversation laws~\cite{alibaud2012continuous}, fluid dynamics~\cite{chen2010anomalous,chen2006speculative,epps2018turbulence}, and so on.  One extensively studied fractional operator is the fractional Laplacian~\cite{lischke2018fractional}
\begin{equation}
    -(-\Delta)^s u(\bx):=c_{d,s}\mathrm{p.v.}\int_{\RR^d} \frac{u(\bx+\by)-u(\bx)}{|\by|^{d+2s}}d\by\quad c_{d,s} := \frac{4^s\Gamma(d/2+s)}{\pi^{d/2}|\Gamma(-s) |}
\end{equation}
which is considered as a generalization of the Laplacian operator. Here $\mathrm{p.v.}$ denotes the principal value integration. In \Cref{sect:app} we list several applications of the fractional Laplacian operator in finance, quantum mechanics and turbulence flow. However, the numerical computation of the FPDE with such operators exhibits special difficulties~\cite{bonito2018numerical,huang2016finite}: (1) the kernel function $\frac{c_{d,s}}{|\by|^{d+2s}}$ has singularities which must be dealt with special care; (2) the kernel function is nonlocal, and therefore the corresponding coefficient matrix is typically dense. The second difficulty impedes people from using the new modeling tool due to its prohibitive computational requirement. There are some efforts to speed up the computation~\cite{meidner2017hp,kyprianou2017unbiased,acosta2018regularity,zhao2017adaptive}, mainly through analyzing its special structure or modifying the definition. 

From another point of view, the fractional partial differential equation with the Laplacian operator~(and many others) can be derived from the infinitesimal generator of the L\'evy process. In particular, the fractional Laplacian corresponds to a symmetric stable process~\cite{garbaczewski2018fractional}. Indeed, in 1D, the forward equation~(or Fokker Planck equation in physics) has the form~\cite{barndorff2012levy}
\begin{equation}\label{equ:levy}
    u_t = a u_{xx} + bu_x + cu + \mathcal{L}u\quad x\in \RR, t\in (0,1)
\end{equation}
where $a\geq 0$, $c\leq 0$, $b\in\RR$, and
\begin{equation}
    \mathcal{L}u = \int_\RR \left(u(x+y)-u(x)-u'(x)\mathbf{1}_{0<|y|<1}(y)y\right)\nu(y)dy
\end{equation}  
Here $\nu(y)$ will be a proper L\'evy measure. For more details on how \cref{equ:ut} naturally arises from L\'evy process, see \Cref{sect:levy}. For some concrete applications, see \Cref{sect:app}. The fractional Laplacian is a special case where $\nu(y) = \frac{c_{1,s}}{|y|^{1+2s}}$~\cite{kwasnicki2017ten}. The model \cref{equ:levy} incorporates a much richer structure and has a broader of applications. For example, in recent years, the modeling of financial markets by L\'evy processes has become an active area of research~\cite{MichaelC25:online}. The numerical difficulties are similar to that of FPDE. 


In this paper, we aim at solving \cref{equ:levy} efficiently based on the well-established $\mathcal{H}$-matrix technique~\cite{bebendorf2008hierarchical,borm2003introduction}. In principle, our algorithm can work for various $\nu(y)$ under mild assumptions, including singular or slow decaying L\'evy measure. We focus on the efficiency of the operator since the ability to efficiently store data and solve is the main bottleneck for today's applications. In particular, the algorithm will equivalently work for many FPDE models, on condition it can be written in the form of \cref{equ:levy}. 

The advantage of adopting the $\mathcal{H}$-matrix is its high efficiency. If direct method is used, which results in a dense coefficient matrix, the storage complexity will be $\mathcal{O}(N^2)$ while the computational complexity will be $\mathcal{O}(N^3)$~(LU factorization)~\cite{golub2012matrix}. However, theoretically, $\mathcal{H}$-matrix can achieve nearly optimal $\mathcal{O}(N)$ storage and computational complexity~\cite{borm2003introduction,bebendorf2008hierarchical}. Similar efforts for efficiently tackling nonlocal problems include application of FFT to circulant or Toeplitz-like stiffness matrix~\cite{hatzinikitas2009fractional,chen2016two,bailey1994fast,chaturapruek2013crime}; however, these methods are restricted to shift-invariant discretization, which usually requires uniform grids and constant coefficients in PDE. Another direction is the use of hierarchical matrices, which we will pursue in the paper. For example, \cite{zhao2017adaptive} adopted adaptive finite element method for FPDEs using hierarchical matrices in 1D; \cite{massei2018fast} analyzed the use of HOLDER arithmetic for solving the 1D case and leveraged the properties to design fast solvers for 2D problems; \cite{karkulikh} used a Galerkin approximation based on piecewise linear functions on a quasi-uniform mesh to the fractional Laplacian on a bounded domain and showed that the inverse of the associated stiffness matrix can be approximated by the block-wise low-rank matrices at an exponential rate in the block rank. Our $\mathcal{H}$-matrix algorithm is distinguished from the existing work in the following ways: the construction and LU-factorization of the $\mathcal{H}$-matrix is completely automatic. In the series expansion version, the users only need to specify the kernel functions and its low-rank expansion series, and then the algorithms will find an optimal $\mathcal{H}$-matrix structure and LU factorize it. In the Blackbox FMM version, the users do not even need to specify the low-rank expansion. The users can also provide the corresponding dense matrix and our algorithm will automatically figure out the corresponding reordering and $\mathcal{H}$-matrix structure.
	
 The new algorithm shows great speedup compared to the direct method for medium and large-scale problems~(for example, in 1D, the crossover for LU, which is the most expensive operation, is around $N=1100$). 

We mention that there are other approaches to solve FPDE. One of the main numerical methods is the Monte Carlo methods~\cite{metropolis1989monte}, which is based on the probabilistic interpretation of the model. If $X_t$ is a L\'evy process with the L\'evy measure $\nu(y)$ and appropriate diffusion and drift coefficients, under certain assumptions, the solution to \cref{equ:levy} can be written as~\cite{MichaelC25:online}
\begin{equation}
    u(x) = \mathbb{E}(u(X_t)|X_0=x)
\end{equation}
So a Monte Carlo method can be applied thereafter. Although Monte Carlo might be the only way possible to compute the solution in high dimensions numerically, it suffers from slow convergence and therefore is impractical for some cases~\cite{hammersley2013monte}. The grid-based method, such as the one we proposed in the paper, will enjoy fast convergence~(and we will prove that the convergence order is $\mathcal{O}(\Delta t^2+h^2)$). 

To end this section, we summarize our major contributions of the paper
\begin{itemize}
    \item Proposed and analyzed an unconditional stable Crank Nicolson scheme for the model problem \cref{equ:levy}. The theoretical error is $\mathcal{O}(\Delta t^2 + h^2)$. For the variable fractional index case where the computational domain is truncated, we show empirically that the error rate is reduced to $\mathcal{O}(\Delta t^2 + h)$.

    \item Proposed and implemented an efficient solver for \cref{equ:levy} based on  $\mathcal{H}$-matrix techniques. The memory and computational complexity is $\mathcal{O}(N)$ if the kernel satisfies some regularity properties~(see \Cref{sect:hmat} for details). 
    \item Proposed a method for computing nonlocal operators involving L\'evy measures that are singular and have a heavy tail~(decay slowly).
    \item Solved a variable index space-fractional Poisson problem on a L-shaped domain using the proposed algorithm.
\end{itemize}

\lib{Applications \label{sect:app}}

In this section, we list several possible applications of the numerical scheme and fast algorithms. These applications are taken from literature which can be formulated as an integrodifferential equation.

\subsection{Option Pricing}

One of the applications of the L\'evy process modeling is the option pricing, where the underlying asset price is assumed to follow a L\'evy process instead of the Brownian motion~\cite{tankov2009jump,kou2002jump}.

Let $S_t$ be the price of a financial asset which is modeled as a stochastic process under a martingale equivalence measure $\mathbb{Q}'$ and on a filtered probability space $(\Omega, \mathcal{F}, \mathcal{F}_t, \mathbb{Q})$.

One of the popular models is the exponential L\'evy model which assumes
\begin{equation}
    S_t = S_0 e^{rt+X_t}
\end{equation}
where $X_t$ is a L\'evy process. Assume $r$ is the interest rate. For a European call or put, the terminal payoff $H_T$ at time $T$ is associated with the underlying asset price $S_T$
\begin{equation}
    H_T = H(S_T)
\end{equation}
 The value of the option is defined as a discounted conditional expection of $H_T$ under the risk-adjusted martingale measure
\begin{equation}
    C_t = \mathbb{E}[e^{-r(T-t)}H(S_T)|\mathcal{F}_t] = \mathbb{E}[e^{-r(T-t)}H(S_T)|S_t=S]
\end{equation}

By introducing $\tau = T-t$, $x=\log\left( \frac{S}{S_0} \right)$, and define
\begin{equation}
    u(x,\tau) = \mathbb{E}[h(x+Y_\tau)]\quad h(x) = H(S_0 e^x)
\end{equation}
 for sufficiently smooth $u$, by applying the Ito's formula for L\'evy process we have the integro-differential equation
\begin{equation}
    \frac{\partial u}{\partial \tau} = \frac{\sigma^2}{2}u_{xx} - (\sigma^2/2-r+\alpha) u_x + \int_\RR (u(x+y)-u(x)-u'(x)\mathbf{1}_{0<|y|<1}(y)y)\nu(y)dy
\end{equation}
with initial condition
\begin{equation}
    u(0,x) = h(x)
\end{equation}

\subsection{Quantum Mechanics}

If the underlying stochastic process powering the random fluctuations is a Gaussian Brownian motion, we obtain the non relativistic Schr\"odinger's equation~\cite{laskin2010principles,hasan2018tunneling,garbaczewski1995schrodinger,laskin2000fractional}
\begin{equation}
    \ii \hbar \partial_t \psi(x,t) = -\frac{\hbar^2}{2m}\partial^2_{x} \psi(x,t)
\end{equation}

In recent years, there is a growing interest in the non-Gaussian stochastic process, and particularly the L\'evy process. One of the popular models is the fractional quantum mechanics, where the stable processes are used as the underlying stochastic process. The popularity of the stable process is justified by the properties of scaling and self-similarity displayed by the process. For any distribution with power-law decay $\frac{1}{|x|^{1+\alpha}}$, $0<\alpha<1$ the generalized central limit theorem guarantee that their sum scaled by $\frac{1}{n^{1/\alpha}}$ converge to the $\alpha$-stable distribution. If the variance is finite, i.e., $\alpha\geq 2$, then the central limit theorem holds, where their sum scaled by $\frac{1}{n^{1/2}}$, properly centered, and identically distributed, converge to the Gaussian distribution.  This leads to the fractional Schr\"odinger equation
\begin{equation}
    \ii \hbar \partial_t \psi(x,t) = D_\alpha (-\hbar^2 \Delta)^{\alpha/2}\psi(x,t)
\end{equation}
where $(-\hbar^2 \Delta)^{\alpha/2}$ is the fractional Laplacian which can be defined through
\begin{equation}
    (-\hbar^2 \Delta)^{\alpha/2}\psi(x,t) = \frac{1}{(2\pi \hbar)^3}\int |\xi|^\alpha \hat \psi(\xi,t) \exp(\ii (\xi,x)/\hbar)d\xi
\end{equation}

More generally, other L\'evy measures can be used to develop quantum mechanics. The more general Schr\"odinger equation reads
\begin{equation}
    \ii \hbar\partial_t \psi(x,t) = -\frac{\hbar^2}{2m}\partial_x^2 \psi(x,t) - \hbar  \int_\RR [\psi(x+y,t)-\psi(x,t)]\nu(y)dy
\end{equation}

Some examples of the L\'evy-Schr\"odinger equations are
\begin{itemize}
    \item Relativistic.
    \begin{equation}
        \ii \hbar\partial_t \psi(x,t) = \sqrt{m^2c^4-c^2\hbar^2\partial_x^2} \psi(x,t)
    \end{equation}
    \item Variance-Gamma laws
    \begin{equation}
        \ii \hbar\partial_t \psi(x,t) = -\frac{\lambda\hbar}{\tau}\int_\RR\frac{\psi(x+y,t)-\psi(x,t)}{|y|}e^{-|y|/\hbar}dy
    \end{equation}

\end{itemize}

\subsection{Turbulence Flow}

It is known that turbulence flow exhibits anomalous diffusion, i.e., the diffusion occurs over distance $\xi$ may scale more than one half, $\xi\sim\mathcal{O}(t^{1/2})$. There are many efforts to model turbulence and capture these anomalies~\cite{chen2010anomalous,chen2006speculative}. One of the recent research is the modeling of turbulence flow via the fractional Laplacian~\cite{epps2018turbulence}. 

If we assume that the equilibrium probability distribution of particle speeds to be L\'evy $\alpha$-stable distributions instead of the Maxwell-Boltzmann distribution, we will arrive at the Navier-Stokes equation with the fractional Laplacian operator as a means to represent the mean friction force arising in a turbulence flow
\begin{equation}
    \rho\frac{D\bar u}{Dt} = -\nabla p + \mu_\alpha \nabla^2 \bar u + \rho C_\alpha \int_{\RR^3} \frac{\bar u(x',t)-\bar u(x,t)}{|x-x'|^{\alpha+3}} dx'
\end{equation}

\lib{Crank Nicolson Scheme Based on $\mathcal{H}$-matrix}

 \subsection{Model Problem}
 We will consider the forward or backward equation driven by the L\'evy process, where the model problem in 1D can be stated as a convection-diffusion integrodifferential equation~\cite{MichaelC25:online}
\begin{equation}\label{equ:ut}
    u_t = a u_{xx} + bu_x + cu + \mathcal{L}u\quad x\in \RR, t\in (0,1)
\end{equation}
where $a\geq 0$, $c\leq 0$, $b\in\RR$, and
\begin{equation}
    \mathcal{L}u = \int_\RR (u(x+y)-u(x)-u'(x)\mathbf{1}_{0<|y|<1}(y)y)\nu(y)dy
\end{equation}

 \subsection{Numerical Scheme}

We consider the case where $\nu(y)<\infty$ and thus the term $u'(x)\mathbf{1}_{0<|y|<1}(y)y$ is not needed since
\begin{align}
	& \int_\RR (u(x+y)-u(x)-u'(x)\mathbf{1}_{0<|y|<1}(y)y)\nu(y)dy \\
	= &\int_\RR (u(x+y)-u(x))\nu(y)dy-u'(x)\int_\RR \mathbf{1}_{0<|y|<1}(y)ydy\\
	=& \int_\RR (u(x+y)-u(x))\nu(y)dy
\end{align}
due to symmetry of $\mathbf{1}_{0<|y|<1}(y)y$ around $y=0$. 

Also, we assume $\nu(y)$ is semi-heavy, i.e., there exists $\alpha_r,\alpha_l>0$, such that $\int_1^\infty e^{(1+\alpha_r)y}\nu(dy)<\infty$, and $\int_{-\infty}^{-1}|y|e^{\alpha_l|y|}\nu(dy)<\infty$.   The case for which $\nu(y)$ might grow to infinity at $y=0$ and decays algebraically will be discussed in \Cref{sect:extension}. To compute the integral term numerically, we need to restrict the computational domain to a bounded interval $\Omega$
\begin{equation}
    \mathcal{L}u \approx \int_{B_l}^{B_r} (u(x+y)-u(x)-u'(x)\mathbf{1}_{0<|y|<1}(y)y)\nu(y)dy
\end{equation}

In fact, it is proved in \cite{cont2005finite} that if $\nu(dy)$ is semi-heavy, the solution $\tilde u(x,t)$ obtained using the truncated integral will satisfy
\begin{equation}\label{equ:small}
    |u( x,t)-\tilde u(x,t)| =\mathcal{O}(e^{-\alpha_l|B_l|}+e^{-\alpha_r|B_r|})
\end{equation}
Therefore, the discretization scheme for $\mathcal{L}$ using trapezoidal rule on uniform grid will be
\begin{equation}
    (\mathcal{L} u)(jh) \approx   \sum_{j\in \mathcal{I}} u_{i+j}\nu_j w_j - u_i \lambda w_j\quad \lambda = \sum_{j\neq 0, j\in \mathcal{I}} \nu_j
\end{equation}
Here $\nu_j=\nu(jh)$, 
\begin{equation}
    \mathcal{I} = \{i: ih\in \Omega\}
\end{equation}
$w_j$ is the weight for the trapezoidal rule and 
\begin{equation}
    w_j = \begin{cases}
        h & j \mbox{ is not the endpoint of } \mathcal{I}\\
        \frac{h}{2} & j \mbox{ is the endpoint of } \mathcal{I}
    \end{cases}
\end{equation}
We define the discrete operator $\delta_L$
\begin{equation}\label{equ:dl}
    (\delta_L u)_j = \sum_{j=-\infty}^\infty (u_{i+j}-u_i)\nu_j h =  \sum_{j\in \mathcal{I}, j=-\infty}^\infty u_{i+j}\nu_j h - u_i \lambda h\quad \lambda = \sum_{j\neq 0, j\in \mathcal{I}} \nu_j
\end{equation}
Then the Crank-Nicolson discretization of \cref{equ:ut} on a uniform grid with spacing $h$ and timestep $\Delta t$ is 
\begin{equation}\label{equ:scheme}
    (I+\frac{1}{2}\Delta tA)u^{n+1} = (I-\frac{1}{2}\Delta tA)u^n
\end{equation}
where 
\begin{equation}
    A = -a\delta_x^2 - b\delta_{2x} - c - \delta_L
\end{equation}
here $\delta_x^2$ and $\delta_{2x}$ are the standard second difference and central first difference. 
Therefore, we have
\begin{equation}\label{equ:Aij}
  {A_{ij}} = 
  \begin{cases}
      {\frac{{2a}}{{{h^2}}} - c + \lambda w_{j-i}} & i=j\\
{ - \frac{a}{{{h^2}}} + \frac{b}{{2h}} - {\nu _{ - 1}w_{j-i}}} & j=i-1\\
{ - \frac{a}{{{h^2}}} - \frac{b}{{2h}} - {\nu _1}w_{j-i}} & j=i+1\\
{ - {\nu _{j - i}w_{j-i}}} & |j-i|\geq 2
  \end{cases}
\end{equation}
 
 \subsection{$\mathcal{H}$-matrix Construction} 
 
 For simplicity, assume $a=b=c=0$; according to \cref{equ:Aij}, these coefficients only contribute to the first off-diagonal parts of the coefficient matrix. We consider the matrix $A_\pm = I \pm \frac{1}{2}A$. Note since the operator $\delta_x^2$, $\delta_{2x}$ only contributes to the tridiagonal, any nonzero entry in $A_\pm$ in the off-diagonal more than one entry away from the diagonal must be $\mp\frac{1}{2}\nu_jh$ according to \cref{equ:dl}. 

We define the kernel associated with each L\'evy measure by
\begin{equation}
    K(x,y) = \nu(y-x)
\end{equation}
then we have $A_{ij}=\nu(x_{i}-x_{j})=K(x_i,x_j)$ for $|i-j|\geq 2$.

We illustrate here the application of the $\mathcal{H}$-matrix technique using the example of $A$ generated by the Gaussian kernel. For more details on the topic of the hierarchical matrices, see \Cref{sect:hmat}. Assume $a=c=0$ in \cref{equ:Aij}, then we can see that
 \begin{equation}
     A_{ij} = h^d k(x_i, x_j)
 \end{equation}
 for some kernel function $k(x,y)$. 
 
Consider the jump diffusion model with Gaussian jumps\footnote{It is also called Merton jump diffusion model in finance, see \cite{matsuda2004introduction,kou2002jump}}, i.e., the L\'evy density can be represented as
\begin{equation}
    \nu(x) = e^{-\varepsilon^2 x^2}
\end{equation}

We consider the kernel function $k(x,y)$ associated with the density
\begin{equation}
    k(x,y) = \nu(x-y) = e^{-\varepsilon^2 (x-y)^2}
\end{equation}
Assume that $x\in \mathcal{X}$, $y\in \mathcal{Y}$, and $\mathcal{X} \cap \mathcal{Y}=\emptyset$, and let $\bar x\in\mathcal{X}$. Denote $t_0=x-\bar x$, and $t=y-\bar x$, then by assumption we have $|t|>|t_0|$. From Taylor expansion we have
\begin{align}
    {e^{ - {\varepsilon ^2}{{(x - y)}^2}}} =& {e^{ - {\varepsilon ^2}{{(t - {t_0})}^2}}}\\
    =&  {e^{ - {\varepsilon ^2}{t^2}{{\left( {1 - \frac{{{t_0}}}{t}} \right)}^2}}} = {e^{ - {\varepsilon ^2}{t^2} - {\varepsilon ^2}t_0^2 + 2{\varepsilon ^2}{t_0}t}}\\
    =& {e^{ - {\varepsilon ^2}{t^2} - {\varepsilon ^2}t_0^2}}\left( {1 + 2{\varepsilon ^2}{t_0}t + \frac{{{{(2{\varepsilon ^2}{t_0}t)}^2}}}{2} + \frac{{{{(2{\varepsilon ^2}{t_0}t)}^3}}}{{3!}} +  \ldots } \right)
\end{align} 
Thus we have
\begin{equation}
    \alpha_n(t) = \frac{{{2^n}{\varepsilon ^{2n}}{e^{ - {\varepsilon ^2}t^2}}t^n}}{{n!}}\quad \beta_n(t) = {e^{ - {\varepsilon ^2}{t^2}}}{t^n}
\end{equation}
we will have
\begin{equation}
    {e^{ - {\varepsilon ^2}{{(x - y)}^2}}} =\sum_{n=0}^\infty \alpha_n(t_0)\beta_n(t)
\end{equation}

\begin{lemma}\label{lemma:glemma}
    Assume $\mathcal{X}$, $\mathcal{Y}$ are two disjoint set in $\Omega$ and $\mathrm{diam}(\Omega)=D$. Let $\delta>0$ be any positive constant, then if 
    \begin{equation}\label{equ:r}
        r > \max\left\{ \log_2\left( \frac{e^{2\varepsilon^2D^2}}{\delta} \right)-1, 12\varepsilon^2D^2-1  \right\}
    \end{equation}
     we have
    \begin{equation}\label{equ:alphabeta}
        |{e^{ - {\varepsilon ^2}{{(x - y)}^2}}}-\sum_{n=0}^r \alpha_n(x-\bar x)\beta_n(y-\bar x)|<\delta 
    \end{equation}
    for any $\bar x\in \mathcal{X}$. 
\end{lemma}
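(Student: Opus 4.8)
The plan is to read \cref{equ:alphabeta} as a bound on the tail of the series $\sum_n\alpha_n(x-\bar x)\beta_n(y-\bar x)$ and to estimate that tail head-on. First I would collapse the general term: multiplying the given expressions for $\alpha_n$ and $\beta_n$, with $t_0=x-\bar x$ and $t=y-\bar x$, gives
\begin{equation}
  \alpha_n(t_0)\beta_n(t)=\frac{(2\varepsilon^2 t_0 t)^n}{n!}\,e^{-\varepsilon^2(t_0^2+t^2)}.
\end{equation}
Since $x$, $\bar x$, $y$ all lie in $\Omega$ and $\mathrm{diam}(\Omega)=D$, we get $|t_0|\le D$ and $|t|\le D$, hence $|2\varepsilon^2 t_0t|\le q:=2\varepsilon^2 D^2$ and $e^{-\varepsilon^2(t_0^2+t^2)}\le 1$. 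Passing to absolute values (the sign of $t_0t$ plays no role), the left side of \cref{equ:alphabeta} equals $\bigl|\sum_{n\ge r+1}\alpha_n(t_0)\beta_n(t)\bigr|\le\sum_{n\ge r+1}q^n/n!$, so the whole claim reduces to showing $\sum_{n\ge r+1}q^n/n!<\delta$ under the two stated lower bounds on $r$.

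For the tail estimate I would use two crude bounds, one tailored to each term of the maximum in \cref{equ:r}. Write $m=r+1$. From $\binom nm\ge 1$, i.e.\ $m!\,(n-m)!\le n!$ for $n\ge m$, one gets $q^n/n!\le (q^m/m!)\,q^{n-m}/(n-m)!$, and summing over $n\ge m$ yields $\sum_{n\ge m}q^n/n!\le (q^m/m!)\,e^q$. Next, the second condition $r>12\varepsilon^2 D^2-1$ is exactly $m>6q$; combined with the elementary bound $m!\ge(m/e)^m$ (which follows from $e^m\ge m^m/m!$) this gives $q^m/m!\le(eq/m)^m\le(e/6)^m<2^{-m}$, the last step because $e/6<\tfrac12$ — this is precisely what pins down the constant $12$. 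Finally the first condition $r>\log_2(e^{2\varepsilon^2D^2}/\delta)-1$ reads $2^{m}>e^{q}/\delta$, i.e.\ $2^{-m}<\delta e^{-q}$. Chaining the three inequalities, $\sum_{n\ge r+1}q^n/n!\le(q^m/m!)e^q<2^{-m}e^q<\delta e^{-q}e^{q}=\delta$, which is the assertion.

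The hard part — such as it is — will not be any individual estimate but arranging the crude bounds so the constants line up with the hypothesis; there is essentially nothing deeper going on. The points to watch are: (i) bounding $|t_0|,|t|$ by $D$ from $\mathcal X,\mathcal Y\subseteq\Omega$, not from the disjointness used to set up the expansion; (ii) carrying the harmless factor $e^{q}$ that comes from $\sum_k q^k/k!\le e^q$, which is exactly the $e^{2\varepsilon^2D^2}$ that the hypothesis pre-loads into the logarithm; and (iii) converting $q^m/m!$ into a power of $\tfrac12$, where the Stirling-type inequality $m!\ge(m/e)^m$ together with $m>6q$ does the job. An equivalent route is a ratio argument: once $m>6q$, consecutive terms of $\sum q^n/n!$ decrease by a factor $<\tfrac16$, so the tail is at most $\tfrac65\,q^m/m!$ and one finishes as above; alternatively, the Chernoff bound for a Poisson tail gives $\sum_{n\ge m}q^n/n!\le(eq/m)^m$ in one line.
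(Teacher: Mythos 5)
Your proof is correct and follows essentially the same route as the paper's: bound the tail of the exponential series by its first term times $e^{2\varepsilon^2D^2}$, use a Stirling-type factorial bound together with $r+1>12\varepsilon^2D^2$ to force that first term below $2^{-(r+1)}$, and finish with the logarithmic condition on $r$. The only cosmetic difference is that you invoke $m!\ge(m/e)^m$ where the paper uses $n!>(n/3)^n$; both yield the needed factor below $\tfrac12$.
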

\begin{proof}
    See \cref{sect:proof}.
\end{proof}

\begin{remark}
    In practice, the estimate \cref{equ:r} is quite conservative and smaller $r$ can actually work very well. However, we need to point out that as the dimensionality increases, such method might suffer from the curse of dimensionality: if we use fix $r=5$ per dimension, the constructed low rank matrix has rank $5$ in 1D, $25$ in 2D, and $125$ in 3D.
\end{remark}

\begin{remark}
	The method proposed above, i.e., where we need to find a low-rank expansion of the kernel function, is by no means the only method to construct a $\mathcal{H}$-matrix. Other methods such as SVD decomposition~\cite{bebendorf2008hierarchical}, ACA~\cite{zhao2005adaptive}, Blackbox FMM~\cite{fong2009black}, hierarchical interpolative factorization~\cite{l2016hierarchical}, etc. In the numerical experiments, we implemented several methods and use appropriate methods for different problems. 
\end{remark}
By using the $\mathcal{H}$-matrix, the storage complexity is reduced to $\mathcal{O}(N)$ which is demonstrated in \cref{fig:construction}. The construction time is also reduced to $\mathcal{O}(N)$ compared to $\mathcal{O}(N^2)$ for full matrices.

In 2D, the Merton jump diffusion model read
\begin{equation}
    \nu(\bx) = \exp(-\varepsilon^2 \|\bx\|^2)
\end{equation}
with the kernel function
\begin{equation}
    k(\bx, \by) = \nu(\bx-\by) = \exp(-\varepsilon^2 \|\bx-\by\|^2)
\end{equation}

Let $\bx\in\mathcal{X}$, $\by\in\mathcal{Y}$ and $\mathcal{X}\cap \mathcal{Y} = \emptyset$, and assume that $\bar x\in \mathcal{X}$, 
\begin{equation}
    t_1 = \bx_1-\bar x_1\quad t_2 = \bx_2-\bar x_2\quad s_1 = \by_1-\bar y_1\quad s_2 = \by_2-\bar y_2\quad
\end{equation}
we have
\begin{equation}
    k(\bx,\by) = \sum\limits_{m,n = 0}^\infty  {\left[ {\frac{{{{(2{\varepsilon ^2})}^{m + n}}}}{{m!n!}}s_1^mt_1^n\exp \left( { - {\varepsilon ^2}(t_1^2 + s_1^2)} \right)} \right]} \left[ {s_2^mt_2^n\exp \left( { - {\varepsilon ^2}(t_2^2 + s_2^2)} \right)} \right]
\end{equation}

Let 
\begin{align}
    \alpha_{m,n} &= {\frac{{{{(2{\varepsilon ^2})}^{m + n}}}}{{m!n!}}s_1^mt_1^n\exp \left( { - {\varepsilon ^2}(t_1^2 + s_1^2)} \right)}\\
    \beta_{m,n} &= {s_2^mt_2^n\exp \left( { - {\varepsilon ^2}(t_2^2 + s_2^2)} \right)}
\end{align}
Similar to \cref{equ:alphabeta}, we can approximate the kernel using low rank summation
\begin{equation}
    k(\bx,\by) \approx \sum_{m,n=0}^{r} \alpha_{m,n}(s_1,t_1)\beta_{m,n}(s_2,t_2)
\end{equation}

Using the storage strategy in \Cref{equ:cons}, we can construct the $\mathcal{H}$-matrix directly. \Cref{fig:construction} shows the construction time as well as storage consumption. Notably, we compare the construction time of the $\mathcal{H}$-matrix with that of the dense matrix. We can see that the construction of $\mathcal{H}$-matrix is quite efficient, both in terms of storage consumption and time consumption: they both achieve an approximately linear asymptotic rate with respect to the problem size $N$. 
\begin{figure}[htpb]
\centering
\scalebox{0.9}{
\begin{tikzpicture}

\definecolor{color0}{rgb}{0.12156862745098,0.466666666666667,0.705882352941177}
\definecolor{color1}{rgb}{1,0.498039215686275,0.0549019607843137}
\definecolor{color2}{rgb}{0.172549019607843,0.627450980392157,0.172549019607843}
\definecolor{color3}{rgb}{0.83921568627451,0.152941176470588,0.156862745098039}

\begin{axis}[
legend cell align={left},
legend entries={{H-matrix},{Direct},{$O(N^2)$},{$O(N)$}},
legend style={at={(0.03,0.97)}, anchor=north west, draw=white!80.0!black},
tick align=outside,
tick pos=left,
title={Memory Used During Construction},
x grid style={white!69.01960784313725!black},
xlabel={Matrix Size},
xmin=724.077343935024, xmax=1482910.40037893,
xmode=log,
y grid style={white!69.01960784313725!black},
ylabel={Bytes},
ymin=2770112.72887551, ymax=337080041946.128,
ymode=log
]
\addlegendimage{no markers, color0}
\addlegendimage{no markers, color1}
\addlegendimage{no markers, color2}
\addlegendimage{no markers, color3}
\addplot [semithick, color0, mark=*, mark size=3, mark options={solid}]
table [row sep=\\]{%
1024	4716768 \\
2048	11744336 \\
4096	27932048 \\
8192	64503792 \\
16384	145966480 \\
32768	325468368 \\
65536	717564400 \\
131072	1567878416 \\
262144	3400688688 \\
524288	7330612272 \\
1048576	15719002736 \\
};
\addplot [semithick, color1, mark=*, mark size=3, mark options={solid}]
table [row sep=\\]{%
1024	8388704 \\
2048	33554528 \\
4096	134217824 \\
8192	536871008 \\
16384	2147483744 \\
32768	8589934688 \\
65536	34359738464 \\
};
\addplot [semithick, color2, dashed]
table [row sep=\\]{%
1024	48331026.4519679 \\
2048	193324105.807872 \\
4096	773296423.231486 \\
8192	3093185692.92594 \\
16384	12372742771.7038 \\
32768	49490971086.8151 \\
65536	197963884347.26 \\
};
\addplot [semithick, color3, dashed]
table [row sep=\\]{%
1024	4719826.80194999 \\
2048	9439653.60389999 \\
4096	18879307.2078 \\
8192	37758614.4155999 \\
16384	75517228.8311999 \\
32768	151034457.6624 \\
65536	302068915.324799 \\
131072	604137830.649599 \\
262144	1208275661.2992 \\
524288	2416551322.5984 \\
1048576	4833102645.19679 \\
};
\end{axis}

\end{tikzpicture}}~
\scalebox{0.9}{
\begin{tikzpicture}

\definecolor{color0}{rgb}{0.12156862745098,0.466666666666667,0.705882352941177}
\definecolor{color1}{rgb}{1,0.498039215686275,0.0549019607843137}
\definecolor{color2}{rgb}{0.172549019607843,0.627450980392157,0.172549019607843}
\definecolor{color3}{rgb}{0.83921568627451,0.152941176470588,0.156862745098039}

\begin{axis}[
legend cell align={left},
legend entries={{H-matrix},{Direct},{$O(N^2)$},{$O(N)$}},
legend style={draw=white!80.0!black},
tick align=outside,
tick pos=left,
title={Construction Time},
x grid style={white!69.01960784313725!black},
xlabel={Matrix Size},
xmin=724.077343935024, xmax=1482910.40037893,
xmode=log,
y grid style={white!69.01960784313725!black},
ylabel={Time (sec)},
ymin=0.00220187706027951, ymax=42434.4875567995,
ymode=log
]
\addlegendimage{no markers, color0}
\addlegendimage{no markers, color1}
\addlegendimage{no markers, color2}
\addlegendimage{no markers, color3}
\addplot [semithick, color0, mark=*, mark size=3, mark options={solid}]
table [row sep=\\]{%
1024	0.016117583 \\
2048	0.046379019 \\
4096	0.127069826 \\
8192	0.292432849 \\
16384	0.860845945 \\
32768	2.081860078 \\
65536	4.223938207 \\
131072	8.916075953 \\
262144	20.242718506 \\
524288	39.780334734 \\
1048576	81.861753704 \\
};
\addplot [semithick, color1, mark=*, mark size=3, mark options={solid}]
table [row sep=\\]{%
1024	0.028535442 \\
2048	0.132978801 \\
4096	1.452989489 \\
8192	3.543617585 \\
16384	35.091152046 \\
32768	159.240163905 \\
65536	642.000945215 \\
};
\addplot [semithick, color2, dashed]
table [row sep=\\]{%
1024	4.83310264519679 \\
2048	19.3324105807872 \\
4096	77.3296423231486 \\
8192	309.318569292594 \\
16384	1237.27427717038 \\
32768	4949.09710868151 \\
65536	19796.3884347261 \\
};
\addplot [semithick, color3, dashed]
table [row sep=\\]{%
1024	0.00471982680194999 \\
2048	0.00943965360389999 \\
4096	0.0188793072078 \\
8192	0.0377586144155999 \\
16384	0.0755172288311999 \\
32768	0.1510344576624 \\
65536	0.302068915324799 \\
131072	0.604137830649599 \\
262144	1.2082756612992 \\
524288	2.4165513225984 \\
1048576	4.83310264519679 \\
};
\end{axis}

\end{tikzpicture}}
\caption{The construction time and the storage consumption of $\mathcal{H}$-matrix. We compare the construction time of the $\mathcal{H}$-matrix with that of the dense matrix. We can see that the construction of $\mathcal{H}$-matrix is quite efficient, both in terms of storage consumption and time consumption: they both achieves an approximately linear asymptotic rate with respect to the problem size $N$.}
\label{fig:construction}
\end{figure}
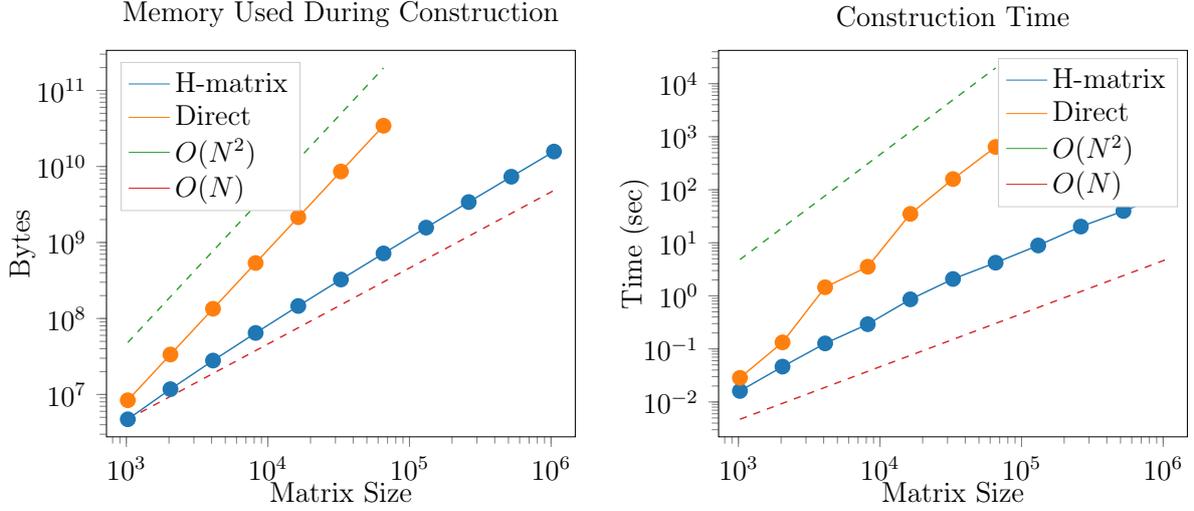

\subsection{Error Analysis}

\subsubsection{Stability}
We carry out the stability analysis using the Fourier transform pair~\cite{isaacson2012analysis}
\begin{align}
    u_j^n =& \frac{1}{2\pi h}\int_{-\pi}^\pi \hat u^n(x) \exp(\ii j x)dx\\
    \hat u_j^n =&  h \sum_{-\infty}^\infty u_j^n \exp(-\ii jx)dx
\end{align}

For simplicity, we assume that the spatial domain is not truncated, i.e., $\mathcal{I}=\mathbb{Z}$; another choice is to assume that $\nu(ih)=0$ for $i\not\in \mathcal{I}$. We have the following lemma
\begin{lemma}
    Let 
    \begin{equation}
        \eta_h(\theta) = \sum_{j=-\infty}^\infty (e^{\ii jh\theta}-1)\nu_jh
    \end{equation}
    be well defined for all $\theta\in\RR$, then we have
    \begin{equation}
        \delta_L e^{\ii \theta x} = \eta_h(\theta) e^{\ii \theta x}
    \end{equation}
    where $\nu_j = \nu(jh)$. In particular, if we split $\nu_j$ into odd part and even part
    \begin{equation}
    	\nu_j^e = \frac{\nu_j + \nu_{-j}}{2}\quad \nu_j^o = \frac{\nu_j - \nu_{-j}}{2}
    \end{equation}
    we have 
    \begin{align}
    	\eta_h(\theta) &=  \eta_h^e(\theta) + \ii\eta_h^o(\theta)\\
    	 \eta_h^e(\theta) &= -2\sum_{j=-\infty}^\infty \sin^2\left(\frac{j\theta h}{2} \right) \nu_j^eh\leq 0\\
    	 \eta_h^o(\theta) &= \sum_{j=-\infty}^\infty \sin(jh\theta) \nu_j^oh\geq 0
    \end{align}
\end{lemma}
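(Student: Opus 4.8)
The plan is to dispatch the three assertions in order, since each one feeds the next.

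\emph{Symbol identity.} First I would substitute the grid function $u_k = e^{\ii\theta x_k} = e^{\ii\theta kh}$ straight into the definition \cref{equ:dl} of $\delta_L$. Evaluated at the node $x_i = ih$ this reads $\sum_{j}(e^{\ii\theta(i+j)h} - e^{\ii\theta ih})\nu_j h$; pulling the common factor $e^{\ii\theta ih}$ out of the sum leaves $e^{\ii\theta ih}\sum_j(e^{\ii\theta jh}-1)\nu_j h = \eta_h(\theta)e^{\ii\theta x_i}$, which is the claim. The only thing that needs a word is the legitimacy of commuting $\delta_L$ with the infinite sum, and this is exactly the hypothesis that $\eta_h(\theta)$ be well defined, i.e.\ absolute convergence of $\sum_j(e^{\ii\theta jh}-1)\nu_j h$, which the standing finite-activity and semi-heaviness assumptions on $\nu$ supply for every $\theta\in\RR$.

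\emph{Even/odd split.} Next I would write $e^{\ii jh\theta}-1 = (\cos(jh\theta)-1) + \ii\sin(jh\theta)$ and insert $\nu_j = \nu_j^e + \nu_j^o$. The mechanism is a parity cancellation in the summation index $j$: $\cos(jh\theta)-1$ (equivalently $\sin^2(jh\theta/2)$) is even in $j$ while $\nu_j^o$ is odd, so $\sum_j(\cos(jh\theta)-1)\nu_j^o h = 0$, and dually $\sin(jh\theta)$ is odd while $\nu_j^e$ is even, so $\sum_j\sin(jh\theta)\nu_j^e h = 0$; both vanishings use the absolute convergence already secured. What is left is $\eta_h^e(\theta) = \sum_j(\cos(jh\theta)-1)\nu_j^e h$ for the real part and $\eta_h^o(\theta) = \sum_j\sin(jh\theta)\nu_j^o h$ for the imaginary part, and $\cos(jh\theta)-1 = -2\sin^2(jh\theta/2)$ puts the first in the stated form.

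\emph{Signs, and the main obstacle.} For $\eta_h^e(\theta)\le 0$ I would only need that a L\'evy measure is nonnegative: then $\nu_j = \nu(jh)\ge 0$, hence $\nu_j^e = (\nu_j+\nu_{-j})/2\ge 0$, and since $\sin^2\ge 0$ and $h>0$ every summand of $-2\sum_j\sin^2(jh\theta/2)\nu_j^e h$ is $\le 0$. I expect this to be the inequality that actually matters downstream --- it says the symbol of $A$ has nonnegative real part, which is all Crank--Nicolson needs for unconditional stability. The genuinely delicate point is the sign of $\eta_h^o$: pairing $j$ with $-j$ and using $\nu_{-j}^o = -\nu_j^o$ reduces it to $h\sum_{j\ge 1}(\nu_j-\nu_{-j})\sin(jh\theta)$, which is not sign-definite for an arbitrary asymmetric $\nu$ and arbitrary $\theta$. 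To obtain $\ge 0$ one seems to need either that $\nu$ be symmetric (so $\nu_j^o\equiv 0$ and the bound is vacuous) or an extra one-sidedness/monotonicity hypothesis on $\nu_j-\nu_{-j}$ together with a restriction of $\theta$ to the frequency band used in the von Neumann analysis. I would therefore either record that hypothesis explicitly or simply note that the stability proof to follow invokes only $\eta_h^e\le 0$.
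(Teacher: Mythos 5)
Your argument coincides with the paper's: the symbol identity comes from substituting $e^{\ii\theta jh}$ into the definition of $\delta_L$ and factoring out $e^{\ii\theta x_i}$ (the paper writes the same computation, somewhat loosely, with an integral in place of the sum), and the even/odd formulas follow from exactly the parity cancellations you describe, with $\cos(jh\theta)-1=-2\sin^2(jh\theta/2)$. Your reservation about the final inequality is also well founded: the paper's proof never establishes $\eta_h^o(\theta)\geq 0$ (its closing remark ``the same is true for $\eta_h^o$'' can only refer to the derivation of the formula), and since $\eta_h^o$ is an odd function of $\theta$ it cannot be nonnegative for all $\theta\in\RR$ unless $\nu_j^o\equiv 0$, i.e.\ unless $\nu$ is symmetric on the grid. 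As you observe, this does no harm downstream, because the von Neumann estimate compares moduli in which the term $\frac{b\Delta t}{2h}\sin\theta+\frac{\Delta t\,\eta_h^o(\theta)}{2}$ appears squared in both numerator and denominator, so only $\eta_h^e(\theta)\leq 0$ is actually used for unconditional stability.
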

\begin{proof}
    By definition, we have
    \[{\delta _L}{e^{\ii\theta x}} = \int_{ - \infty }^\infty  {\left( {{e^{\ii(x + y)\theta }} - {e^{\ii x\theta }}} \right)\nu (y)dy = {\eta _h}(\xi )} {e^{\ii\theta x}}\]
   In addition, direct computation yields
   \begin{align}
   	\eta _h^e(\theta ) =& \sum\limits_{j =  - \infty }^\infty  {({e^{iijh\theta }} - 1)\nu _j^eh} \\
   	=&   - 2\sum\limits_{j =  - \infty }^\infty  {{{\sin }^2}\left( {\frac{{j\theta h}}{2}} \right)\nu _j^eh}  = - 2\sum\limits_{j =  - \infty }^\infty  { {{\sin }^2}\left( {\frac{{j\theta h}}{2}} \right)\nu _j^{}h} 
   \end{align}
   the same is true for $\eta_h^o(\theta)$
\end{proof}

\begin{remark}
	In the case $\nu(y)$ is symmetric, $\nu_j^o=0$, and therefore we have
    \begin{equation}
        \eta_h(\theta) = -2\sum_{j=-\infty}^\infty \sin^2\left(\frac{j\theta h}{2} \right) \nu_jh\leq 0
    \end{equation}
\end{remark}

The Fourier transform of the numerical scheme gives
\begin{equation}\label{equ:von}
    \hat u_i^{n+1} = \frac{{1 - a\frac{{\Delta t}}{{{h^2}}}{{\sin }^2}\frac{\theta }{2} + \frac{{b\Delta t}}{{2h}}\ii\sin \theta  + \frac{{c\Delta t}}{2}+ \frac{{\Delta t\eta_h (\theta )}}{2}}}{{1 + a\frac{{\Delta t}}{{{h^2}}}{{\sin }^2}\frac{\theta }{2} - \frac{{b\Delta t}}{{2h}}\ii\sin \theta  - \frac{{c\Delta t}}{2} - \frac{{\Delta t\eta_h (\theta )}}{2}}} \hat u_i^n
\end{equation}

Note we have
\begin{multline}
{\left| {1 - a\frac{{\Delta t}}{{{h^2}}}{{\sin }^2}\frac{\theta }{2} + \frac{{b\Delta t}}{{2h}}\ii\sin \theta  + \frac{{c\Delta t}}{2} + \frac{{\Delta t\eta_h^e (\theta )}}{2}} \right|^2} = \\
{\left| {1 - a\frac{{\Delta t}}{{{h^2}}}{{\sin }^2}\frac{\theta }{2} + \frac{{c\Delta t}}{2} + \frac{{\Delta t\eta_h^e (\theta )}}{2}} \right|^2} + {\left| {\frac{{b\Delta t}}{{2h}}\sin \theta } +\frac{\Delta t \eta^o_h(\theta)}{2}\right|^2}
\end{multline}
\begin{multline}
\left| {1 + a\frac{{\Delta t}}{{{h^2}}}{{\sin }^2}\frac{\theta }{2} - \frac{{b\Delta t}}{{2h}}\ii\sin \theta  - \frac{{c\Delta t}}{2} - \frac{{\Delta t\eta_h^e (\theta )}}{2}} \right| = \\
{\left| {1 + a\frac{{\Delta t}}{{{h^2}}}{{\sin }^2}\frac{\theta }{2} - \frac{{c\Delta t}}{2} - \frac{{\Delta t\eta_h^e (\theta )}}{2}} \right|^2} + {\left| {\frac{{b\Delta t}}{{2h}}\sin \theta } +\frac{\Delta t \eta^o_h(\theta)}{2} \right|^2}
\end{multline}

Since we have $a\geq 0$, $c\leq 0$, $\eta_h(\theta)\leq  0$, we always have
\begin{equation}
    \left| {1 + a\frac{{\Delta t}}{{{h^2}}}{{\sin }^2}\frac{\theta }{2} - \frac{{c\Delta t}}{2} - \frac{{\Delta t\eta^e_h (\theta )}}{2}} \right| \ge \left| {1 - a\frac{{\Delta t}}{{{h^2}}}{{\sin }^2}\frac{\theta }{2} + \frac{{c\Delta t}}{2} + \frac{{\Delta t\eta^e_h (\theta )}}{2}} \right|
\end{equation}

Therefore, the model of the ratio in \cref{equ:von} is always no greater than 1. Thus all the wave modes $e^{\ii \theta x}$ will not grow in magnitude if we carry out the Crank-Nicolson scheme. To summarize, we have proved
\begin{lemma}[Stability]
Assume that $\mathcal{I}=\mathbb{Z}$. Then the Crank Nicolson scheme \cref{equ:scheme} is unconditionally stable. 
\end{lemma}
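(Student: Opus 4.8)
The plan is to carry out a von Neumann (discrete Fourier) stability analysis of the scheme \cref{equ:scheme}. First I would substitute a single Fourier mode $u_j^n = \hat u^n e^{\ii j h \theta}$ into the Crank--Nicolson update and use the fact that each of the constituent discrete operators acts diagonally on such modes: the standard symbols $\delta_x^2 \mapsto -\tfrac{4}{h^2}\sin^2\tfrac{\theta}{2}$ and $\delta_{2x} \mapsto \tfrac{\ii}{h}\sin\theta$, together with the identity $\delta_L e^{\ii\theta x} = \eta_h(\theta)\, e^{\ii\theta x}$ established in the preceding lemma. This produces the amplification factor as the rational expression \cref{equ:von}, i.e. the ratio of $1 - a\tfrac{\Delta t}{h^2}\sin^2\tfrac{\theta}{2} + \tfrac{b\Delta t}{2h}\ii\sin\theta + \tfrac{c\Delta t}{2} + \tfrac{\Delta t}{2}\eta_h(\theta)$ over the same expression with the signs of the $a$, $c$ and $\eta_h$ terms flipped.

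Next I would invoke the splitting $\eta_h(\theta) = \eta_h^e(\theta) + \ii\,\eta_h^o(\theta)$ from the earlier lemma, recording that $\eta_h^e$ and $\eta_h^o$ are real and that $\eta_h^e(\theta)\le 0$ for all $\theta$. Separating real and imaginary parts in numerator and denominator, the key observation is that the imaginary contributions — coming from the convection term $\tfrac{b\Delta t}{2h}\sin\theta$ and from the odd part $\tfrac{\Delta t}{2}\eta_h^o(\theta)$ — enter numerator and denominator with the same magnitude, so that the comparison of the two squared moduli reduces to a comparison of the real parts $1 \pm q(\theta)$, where $q(\theta) := a\tfrac{\Delta t}{h^2}\sin^2\tfrac{\theta}{2} - \tfrac{c\Delta t}{2} - \tfrac{\Delta t}{2}\eta_h^e(\theta)$.

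Then, using $a\ge 0$, $c\le 0$ and $\eta_h^e(\theta)\le 0$, the quantity $q(\theta)$ is nonnegative, hence $(1+q(\theta))^2 \ge (1-q(\theta))^2$; adding the common squared imaginary term preserves the inequality, giving $|\hat u^{n+1}(\theta)| \le |\hat u^{n}(\theta)|$ for every mode and every choice of $\Delta t$ and $h$. Summing over modes via Parseval's identity for the Fourier pair above yields $\|u^{n+1}\|_{\ell^2} \le \|u^{n}\|_{\ell^2}$, which is exactly unconditional stability.

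The main obstacle is not analytic depth but careful bookkeeping in the modulus comparison: one must check that $q(\theta)\ge 0$ holds for \emph{all} $\theta\in\RR$ — this is where the hypothesis $\mathcal{I}=\mathbb{Z}$ is used, so that $\eta_h$ is a well-defined function with $\eta_h^e\le 0$ — and that the shared imaginary part genuinely factors out of the comparison rather than coupling to the real parts. This last point is what makes the convection term $b u_x$ and a possibly non-symmetric L\'evy measure $\nu$ harmless for stability.
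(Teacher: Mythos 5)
Your proposal is correct and follows essentially the same route as the paper: a von Neumann analysis using the symbol identity $\delta_L e^{\ii\theta x}=\eta_h(\theta)e^{\ii\theta x}$, the even/odd splitting of $\eta_h$, the observation that the imaginary parts (the $b$ term plus $\tfrac{\Delta t}{2}\eta_h^o$) are common to numerator and denominator of \cref{equ:von}, and the sign conditions $a\ge 0$, $c\le 0$, $\eta_h^e\le 0$ to bound the amplification factor by $1$. The only difference is cosmetic: you make the final Parseval step to $\ell^2$ stability explicit, which the paper leaves implicit.
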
 

\begin{remark}
	For simplicity, we have assumed that the domain is not truncated, i.e. $\mathcal{I}=\mathbb{Z}$; in practice, we cannot have infinite number of unknowns $u^{n+1}_j$ and need to impose artificial boundary conditions. This truncation can have undesired impact on the accuracy in the numerical scheme and therefore reduce the convergence order, especially when the L\'evy measure has a heavy tail. See remarks in \Cref{sect:extension} for more details. 
\end{remark}

\subsubsection{Consistency}
In consideration of \cref{equ:small}, we assume that 
\begin{equation}\label{equ:assumption}
    \nu(y) = 0 \quad y>B_r \mbox{ or } y<B_l
\end{equation}

The consistency is a direct result of the Crank Nicolson scheme. Note that \cref{equ:dl} is the trapezoidal discretization of the nonlocal operator, we have
\begin{equation}
    \int_{B_l}^{B_r} (u(x_j+y)-u(x_j))\nu(y)dy = (\delta_L u)_j + \mathcal{O}(h^2)
\end{equation}
and therefore
\begin{equation}\label{equ:deltal}
  \begin{aligned}
    &\frac{{({\delta _L}u)_j^n + ({\delta _L}u)_j^{n + 1}}}{2} \\
    =& \int_{{B_l}}^{{B_r}} {\left( {\frac{{u({x_{i + j}} + y,{t_n}) + u({x_{i + j}} + y,{t_{n + 1}})}}{2} - \frac{{u({x_i} + y,{t_n}) + u({x_i} + y,{t_{n + 1}})}}{2}} \right)} \nu (y)dy + \mathcal{O}( {h^2})\\
     =& \int_{{B_l}}^{{B_r}} {\left( {u\left( {{x_{i + j}} + y,{t_{n + \frac{1}{2}}}} \right) - u\left( {{x_i} + y,{t_{n + \frac{1}{2}}}} \right)} \right)} \nu (y)dy + \mathcal{O}(\Delta {t^2} + {h^2})\\
      =& {\cal L}u\left( {{x_i},{t_{n + \frac{1}{2}}}} \right) + \mathcal{O}(\Delta {t^2} + {h^2})
\end{aligned}
\end{equation}
It is a standard result that~\cite{giles2005convergence}
\begin{multline}\label{equ:deltar}
    \frac{{(a\delta _x^2 + b{\delta _{2x}} + c)u(x_i,t_n) + (a\delta _x^2 + b{\delta _{2x}} + c)u(x_i,t_{n+1})}}{2} \\= a{u_{xx}}\left( {{x_i},{t_{n + \frac{1}{2}}}} \right) + b{u_x}\left( {{x_i},{t_{n + \frac{1}{2}}}} \right) + cu\left( {{x_i},{t_{n + \frac{1}{2}}}} \right)+\mathcal{O}(h^2+\Delta t^2)
\end{multline}
and that
\[\frac{{u({x_i},{t_{n + 1}}) - u({x_i},{t_n})}}{{\Delta t}} = \mathcal{O}(\Delta t^2)\]
therefore combining \cref{equ:deltal,equ:deltar} we have
\begin{lemma}[Consistency]\label{lemma:consistency}
    Assume \cref{equ:assumption} holds. Then the truncation error for the numerical scheme \cref{equ:scheme}  
    \begin{multline}
        T^n_i := \frac{{u({x_i},{t_{n + 1}}) - u({x_i},{t_n})}}{{\Delta t}}\\
         - \frac{{(a\delta _x^2 + b{\delta _{2x}} + c + {\delta _L})u({x_i},{t_n}) + (a\delta _x^2 + b{\delta _{2x}} + c + {\delta _L})u({x_i},{t_{n + 1}})}}{2}
    \end{multline}
    satisfies
    \begin{equation}
        T^n_i = \mathcal{O}(\Delta t^2 + h^2)
    \end{equation}
    
\end{lemma}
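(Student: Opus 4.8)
The plan is to read off the claim as an additive combination of the three half-step truncation estimates that have just been assembled, together with the fact that, under \cref{equ:assumption}, the exact solution $u$ satisfies the integrodifferential equation \cref{equ:ut} pointwise. The role of assumption \cref{equ:assumption} is precisely to kill the modelling error \cref{equ:small}: since $\nu$ is genuinely supported in $[B_l,B_r]$, the operator $\mathcal{L}$ coincides with the finite-interval integral that $\delta_L$ discretizes, and moreover the doubly-infinite sum in \cref{equ:dl} is in fact a finite (composite trapezoidal) sum, so there is no discrepancy between $\mathcal{I}=\mathbb{Z}$ and the truncated index set.

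Concretely, I would expand everything about the half time level $t_{n+\frac12}$ and record: (i) the Crank--Nicolson time difference, $\frac{u(x_i,t_{n+1})-u(x_i,t_n)}{\Delta t}=u_t(x_i,t_{n+\frac12})+\mathcal{O}(\Delta t^2)$, a midpoint Taylor expansion using boundedness of $u_{ttt}$; (ii) the averaged local part, which is \cref{equ:deltar}, giving $\frac12[(a\delta_x^2+b\delta_{2x}+c)u(x_i,t_n)+(a\delta_x^2+b\delta_{2x}+c)u(x_i,t_{n+1})]=au_{xx}+bu_x+cu$ at $(x_i,t_{n+\frac12})$ up to $\mathcal{O}(h^2+\Delta t^2)$, combining the classical $\mathcal{O}(h^2)$ consistency of $\delta_x^2,\delta_{2x}$ with a second-order-in-time average; (iii) the averaged nonlocal part, which is \cref{equ:deltal}, giving $\frac12[(\delta_L u)_j^n+(\delta_L u)_j^{n+1}]=\mathcal{L}u(x_i,t_{n+\frac12})+\mathcal{O}(\Delta t^2+h^2)$. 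Subtracting (ii)$+$(iii) from (i), the terms $u_t$ and $au_{xx}+bu_x+cu+\mathcal{L}u$ at $(x_i,t_{n+\frac12})$ cancel by \cref{equ:ut}, and the error constants merely add, so $T^n_i=\mathcal{O}(\Delta t^2+h^2)$ falls out with nothing delicate remaining.

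The only substantive ingredient — and the one place regularity hypotheses genuinely enter — is estimate (iii), i.e. the assertion in \cref{equ:deltal} that $\delta_L$ is an $\mathcal{O}(h^2)$-accurate quadrature. The composite trapezoidal rule encoded in \cref{equ:dl}, with the half-weights $w_j=h/2$ at the endpoints of $\mathcal{I}$, is $\mathcal{O}(h^2)$ provided the integrand $y\mapsto(u(x_i+y,t)-u(x_i,t))\nu(y)$ is $C^2$ on $[B_l,B_r]$; this needs $u(\cdot,t)$ to be $C^2$ in space and $\nu\in C^2$ on its support, while the fact that $u(x_i+y,t)-u(x_i,t)$ vanishes at $y=0$ means the origin causes no trouble (we are in the bounded-$\nu$ regime, the singular/heavy-tail case being postponed to \Cref{sect:extension}). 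The $\mathcal{O}(\Delta t^2)$ in (iii) then comes from replacing $\tfrac12(u(\cdot,t_n)+u(\cdot,t_{n+1}))$ by $u(\cdot,t_{n+\frac12})$ under the integral and bounding the remainder by $\|u_{tt}\|_\infty\int_{B_l}^{B_r}|\nu(y)|\,dy<\infty$. Thus the argument rests on a standing assumption that the exact solution $u$ has bounded derivatives up to the orders invoked ($u_{xxxx}$, $u_{ttt}$, and the mixed second derivatives entering the trapezoidal remainder), which I would state explicitly; granting that, \Cref{lemma:consistency} is immediate from \cref{equ:deltal,equ:deltar} and (i).
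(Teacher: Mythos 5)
Your proposal follows essentially the same route as the paper: expand about $t_{n+\frac12}$, use the trapezoidal $\mathcal{O}(h^2)$ estimate for $\delta_L$ (the paper's \cref{equ:deltal}), the standard $\mathcal{O}(h^2+\Delta t^2)$ estimate for the averaged local differences (\cref{equ:deltar}), the midpoint estimate for the time quotient, and then cancel the leading terms via \cref{equ:ut}. Your explicit statement of the regularity needed (smoothness of $u$ and of $\nu$ on its support for the trapezoidal remainder) only makes precise what the paper leaves implicit, so the argument is correct and matches the paper's proof.
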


\subsubsection{Convergence}

Finally, we are in a position to prove the convergence of the numerical scheme \cref{equ:scheme}. 

\begin{theorem}
    Assume that $\nu(y)\in C(\RR)$ and the condition in \cref{lemma:consistency} is satisfied. Let $u_i^n$ be the numerical solution at $x_i$ and time $t_n$, and $u(x, t)$ be the exact solution. Then the numerical scheme \cref{equ:scheme} is unconditionally stable and 
    \begin{equation}
        |u(x_i,t_n)-u_i^n| = \mathcal{O}(\Delta t^2 + h^2) \quad \Delta t \rightarrow 0, h\rightarrow 0
    \end{equation}
\end{theorem}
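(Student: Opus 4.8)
The plan is to deduce the statement from the Lax--Richtmyer equivalence principle: the two required ingredients, consistency at order $\mathcal{O}(\Delta t^2+h^2)$ (\Cref{lemma:consistency}) and unconditional stability (the stability result of the previous subsection), are already in hand, so what remains is to package them into an error bound via a discrete Duhamel/telescoping argument. I would first set up the error equation. Write the one-step propagator of \cref{equ:scheme} as $S=A_+^{-1}A_-$ with $A_\pm=I\pm\tfrac12\Delta t\,A$, and set $e^n_i:=u(x_i,t_n)-u^n_i$. Substituting the exact solution into the scheme and using the definition of the truncation error $T^n$ from \Cref{lemma:consistency} gives
\begin{equation}
  A_+\,u(\cdot,t_{n+1}) = A_-\,u(\cdot,t_n) + \Delta t\,T^n ,
\end{equation}
and subtracting \cref{equ:scheme} yields $A_+ e^{n+1}=A_- e^n+\Delta t\,T^n$, i.e.\ $e^{n+1}=S\,e^n+\Delta t\,A_+^{-1}T^n$ with $e^0=0$. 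Unrolling the recursion,
\begin{equation}
  e^n=\Delta t\sum_{k=0}^{n-1} S^{\,n-1-k}A_+^{-1}T^k .
\end{equation}

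Next I would bound the two operators in the $\ell^2(\mathbb{Z})$ norm by Fourier analysis, reusing the symbols already computed in the stability section. By Parseval, $\|S\|_{\ell^2}=\sup_\theta|g(\theta)|$, where $g(\theta)$ is exactly the amplification ratio appearing in \cref{equ:von}, shown there to satisfy $|g(\theta)|\le 1$; similarly $\|A_+^{-1}\|_{\ell^2}=\sup_\theta|d(\theta)|^{-1}$, where $d(\theta)$ is the denominator of \cref{equ:von}, whose modulus the stability argument bounds below by $1$ (all of its real contributions carry the same sign as the leading term $1$ because $a\ge 0$, $c\le 0$, $\eta_h^e(\theta)\le 0$). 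Hence $\|S^{\,n-1-k}A_+^{-1}\|_{\ell^2}\le 1$ for every $k$, so
\begin{equation}
  \|e^n\|_{\ell^2}\le \Delta t\sum_{k=0}^{n-1}\|T^k\|_{\ell^2}\le n\,\Delta t\max_{0\le k<n}\|T^k\|_{\ell^2}\le t_n\cdot C(\Delta t^2+h^2),
\end{equation}
the last inequality being \Cref{lemma:consistency} (here the hypothesis $\nu\in C(\RR)$ together with \cref{equ:assumption} is what makes the consistency estimate applicable uniformly on the grid). Since $\sup_i|e^n_i|\le \|e^n\|_{\ell^2(\mathbb{Z})}$, this gives the claimed pointwise bound, and unconditional stability is inherited directly from the stability lemma.

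The main obstacle is not conceptual but functional-analytic bookkeeping on the unbounded spatial grid: to make the Parseval identities rigorous one must work in $\ell^2(\mathbb{Z})$ (or a weighted variant), which forces a standing assumption that the exact solution — and hence each truncation error $T^k$ — decays fast enough to be square-summable. This is consistent with the semi-heavy-tail / domain-truncation setup described around \cref{equ:small} and in \Cref{sect:extension}, but it should be stated explicitly rather than left implicit. A second point deserving a line of care is the lower bound $|d(\theta)|\ge 1$, giving $\|A_+^{-1}\|_{\ell^2}\le 1$: it is immediate from the same sign considerations used to prove stability, yet it is exactly what prevents the accumulated truncation error from being amplified over the $\mathcal{O}(1/\Delta t)$ time steps, so it is worth isolating. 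Everything genuinely new relative to textbook Crank--Nicolson analysis — namely that the nonlocal term $\delta_L$ does not destroy the symbol bound — has already been absorbed into the stability lemma, so the convergence proof is essentially its corollary.
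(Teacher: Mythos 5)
Your proposal is correct and is essentially the paper's own argument made explicit: the paper proves the theorem in one line by invoking ``stability and consistency imply convergence'' (Lax--Richtmyer), and your Duhamel/telescoping expansion with the symbol bounds $|g(\theta)|\le 1$ and $|d(\theta)|\ge 1$ is exactly the standard unpacking of that citation. The one caveat, which your proof shares with the paper's, is the norm bookkeeping at the end: the Fourier/Parseval argument naturally gives convergence in the $h$-weighted discrete $L^2$ norm, and passing from there (or from an unweighted $\ell^2$ bound on $T^k$, which is not uniformly $\mathcal{O}(\Delta t^2+h^2)$ over $\mathcal{O}(1/h)$ grid points) to the stated pointwise bound costs a factor $h^{-1/2}$ unless an additional argument is supplied.
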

\begin{proof}
    The theorem is a direct result that the stability and consistency imply convergence~\cite{giles2005convergence}. 
\end{proof}

\lib{Singular and/or Slow Decaying L\'evy Measure: the Fractional Laplacian}\label{sect:extension}

We now consider the general case where $\nu(y)$ is singular at $y=0$ or has a heavy tail instead of the assumption $\nu(y)<\infty$ and $\nu(y)$ is semi-heavy in the previous sections. We will only state the algorithm in 1D, but point out that it can be directly generalized to higher dimensions and demonstrate its validity in the numerical examples. 

One such example is the fractional Laplacian where the L\'evy measure is 
\begin{equation}
    \nu(y) = \frac{c_{1,s}}{|y|^{1+2s}} \qquad c_{1,s} = \frac{2^{2s}\Gamma\left( \frac{1+2s}{2} \right)}{\pi^{1/2}\left| \Gamma\left(-s\right) \right|}
\end{equation}
where $s\in (0,1)$. Note in this case, $-\int_{\RR}(u(x+y)-u(x))\nu(y)dy$ must be understood in the principal value integration. The corresponding stochastic process associated with the fractional Laplacian is the $\alpha$-stable process. 

Consider the general singular integral operator
\begin{equation}\label{equ:Ix}
    I(x) = \int_{\RR}(u(x+y)-u(x)-\rho(y)u'(x)y)\nu(y)dy
\end{equation}
where $\rho(y)u'(x)$ is a drift term to remove small activity from the jumps. $\rho(y)$ is a radial symmetric window function, satisfying
\begin{equation}\label{equ:rho_condition}
\begin{cases}
    1-\rho(y)\sim \mathcal{O}(y^4)& y\rightarrow 0\\
    \rho(y)=0 & |y|\geq r
\end{cases}
\end{equation}
where $r>0$ is a positive number. 

As a reminder, we require $\nu(y)$ to satisfy the following conditions
\begin{equation}\label{equ:ny}
    \int_{-r}^ry^2\nu(y)dy <\infty, \quad \int_{|y|\geq r} \nu(y)dy<\infty
\end{equation} 
where $r>0$ is a constant. 

The choice of $\rho(y)$ doesn't matter. In fact, if $\tilde\rho(x)$ is another window function that satisfies \cref{equ:rho_condition}, we have
\begin{align}
    &\int_{\RR}(u(x+y)-u(x)-\tilde\rho(y)u'(x)y)\nu(y)dy \\
    =& \int_{\RR}(u(x+y)-u(x)-\rho(y)u'(x)y)\nu(y)dy + u'(x)\int_{\RR}(\rho(y)-\tilde\rho(y)y)\nu(y)dy
\end{align}
we can add the second term to the drift term in the model. 

The first condition in \cref{equ:rho_condition} is designed to take into consideration of the heavy tail case, where $\nu(y)$ can decay like $\mathcal{O}(1/|y|^{1+2s})$ for some $s\in (0,1)$. For example, in the special case     $\nu(y) = c_{1,s}\frac{1}{|y|^{1+2s}}$, $\int_{\RR}(u(x+y)-u(x))\nu(y)dy$ is not well defined but only in the principle value integration, and we have
\begin{equation}
    \mathrm{p.v.}\int_{\RR}(u(x+y)-u(x))\nu(y)dy = \int_{\RR}(u(x+y)-u(x)-\rho(y)u'(x)y)\nu(y)dy
\end{equation}
for any valid window function $\rho(y)$ thanks to the cancellation of the drift term due to symmetry. 

Although $\int_{\RR}(u(x+y)-u(x)-\rho(y)u'(x)y)\nu(y)dy$ is well-defined in this case, the integrand will behave like 
\begin{equation}
    (u(x+y)-u(x)-\rho(y)u'(x)y)\nu(y) = \mathcal{O}\left(\frac{1}{|y|^{2s-1}}\right)
\end{equation}
in the case $s\rightarrow 0+$, we will have numerical difficulty if a direct numerical integration is applied, especially for $s<\frac{1}{2}$. In the following, we will propose a numerical discretization for \cref{equ:Ix} targeting at the most numerical challenging case described above 
\begin{equation}\label{equ:ny2}
    \nu(y) = \frac{n_0(y)}{|y|^{1+2s}}
\end{equation} 
where $n_0(y)$ is a bounded continuous function. 

We make two assumptions on $u(x)$
\begin{itemize}
    \item $u\in C(\RR)$ 
    \item Local smoothness. $u\in C^4([x-\delta, x+\delta])$ for some $\delta>0$, i.e., $u$ has fourth order derivative near the location where we want to evaluate $I(x)$.
    \item Far field asymptotic limit. Assume $L_W>r$. The far field contribution
    \begin{equation}\label{equ:fxy}
        f_x^{L_W}(y) = \int_{|y|>L_W}u(x+y)\nu(y)dy
    \end{equation}
    is well defined. In the case $\frac{u(x+y)}{u(y)}\rightarrow f(y)$, this term can be approximated by $\int_{|y|>L_W} f(y)dy$
\end{itemize}


The strategy is the singularity subtraction, which is one of the standard method in treating singular integrals in BEM~\cite{minden2018simple,jarvenpaa2006singularity,wilde1998direct,hanninen2006singularity,anselone1981singularity}. We subtract a local diffusion term from \cref{equ:Ix}
\begin{multline}\label{equ:evalIx1d}
    I(x) = \int_{\RR}(u(x+y)-u(x)-\rho(y)u'(x)y- \frac{1}{2}\rho(y)u''(x)y^2 )\nu(y)dy \\
    + \frac{1}{2}u''(x)\int_\RR \rho(y)\nu(y)y^2 dy
\end{multline}
We can immediately split the first integral into two parts
\begin{equation}
    I_1(x) = \int_{|y|\leq L_W}(u(x+y)-u(x)-\rho(y)u'(x)y- \frac{1}{2}\rho(y)u''(x)y^2 )\nu(y)dy
\end{equation}
and
\begin{align}
    I_2(x) =& \int_{|y|> L_W}(u(x+y)-u(x)-\rho(y)u'(x)y- \frac{1}{2}\rho(y)u''(x)y^2 )\nu(y)dy\\
     = &\int_{|x|> L_W}(u(x+y)-u(x) )\nu(y)dy = f_x^{L_W} - u(x)\int_{|y|> L_W}\nu(y)dy
\end{align}

By Taylor expansion, it is easy to see
\begin{equation}
    u(x+y)-u(x)-\rho(y)u'(x)y- \frac{1}{2}\rho(y)u''(x)y^2 =\mathcal{O}(|y|^3)
\end{equation}
and therefore the integrand of $I_1(x)$ will behave like $\mathcal{O}(|y|^{2-2s})$ near the origin. Since $2-2s\geq 0$, the integrand becomes continuous near the origin. Thus $I_1(x)$ is well defined.   

As $y\rightarrow \infty$, the term terms in $I_2(x)$ are both well defined according to the assumptions \cref{equ:fxy,equ:ny}. 

The second term in \cref{equ:evalIx1d} 
\begin{equation}
    I_3(x) = \frac{1}{2}u''(x)\int_{|y|\leq r}\nu(y)y^2 dy
\end{equation}
is a local diffusion term and the coefficient is well defined according to \cref{equ:ny}.

We now focus on the numerical discretization of $I_1(x)$, $I_2(x)$ and $I_3(x)$. We divide the mask window into $2N$ uniform subintervals and consider the grid $\{ih:i\in\mathbb{Z} \}$, where $h = L_W/N$. We denote $u_i = u(ih)$.

Since the integrand in $I_1(x)$ is continuous, we can use a simple trapezoidal quadrature rule to approximate the integral. Assume the quadrature weights are $w_j$ given by $w_{-N}=w_N = \frac{h}{2}$ and $w_j=h$, $j=-N+1, -N+2, \ldots, N-2, N-1$.
\begin{equation}\label{equ:I1approx}
  \begin{aligned}
    I_1(x_i) &\approx {\sum\limits_{j =  - N}^N}' {{u_{i + j}}\nu(jh){w_j}}  - {u_i}{\sum\limits_{j =  - N}^N}' {\nu(jh){w_j}} \\
    & - \frac{{{u_{i + 1}} - {u_{i - 1}}}}{h}{\sum\limits_{j =  - N}^N}' {\rho (jh)\nu(jh)jh}  - \frac{{{u_{i + 1}} + {u_{i - 1}} - 2{u_i}}}{{2{h^2}}}{\sum\limits_{j =  - N}^N}' {\rho (jh){{(jh)}^2}\nu(jh)} 
\end{aligned}
\end{equation}
where ${\sum\limits_{j =  - N}^N}'$ denotes the summation excluding $j=0$. 

For $I_2(x)$, $f_x^{L_W}$ is either provided as an input or computed using a numerical quadrature and so is $\int_{|x|>L_W} \nu(y)dy$. We will see how these terms are obtained in the examples below. 
\begin{equation}\label{equ:I2approx}
    I_2(x_i)\approx f_{x_i}^{L_W} - u(x_i)\int_{|y|> L_W}\nu(y)dy
\end{equation}
For $I_3$, a central difference scheme is applied to the second order derivative term. 
\begin{equation}\label{equ:I3approx}
    I_3(x_i)\approx \frac{{{u_{i + 1}} + {u_{i - 1}} - 2{u_i}}}{{2{h^2}}}\int_{|y| \leqslant r} n (y){y^2}dy
\end{equation}
and the integral can either be computed analytically or numerically.

In practice, we want to compute $I(x)$ for $x\in [-L,L]$, according to \cref{equ:I1approx}, we need to know the values of $u(x)$ on $[-L-L_W,L+L_W]$ and its corresponding far-field interactions. \Cref{fig:fig31} visualizes the relationship. To compute $I(x_i)$, we need to compute the near field interaction and local interaction using values of $u(x)$ from the green area. The values of $u(x)$ are provided in the green and red area for computing $I(x)$, $x\in [-L, L]$.

\begin{figure}[H] 
\centering
\includegraphics[width=0.8\textwidth,keepaspectratio]{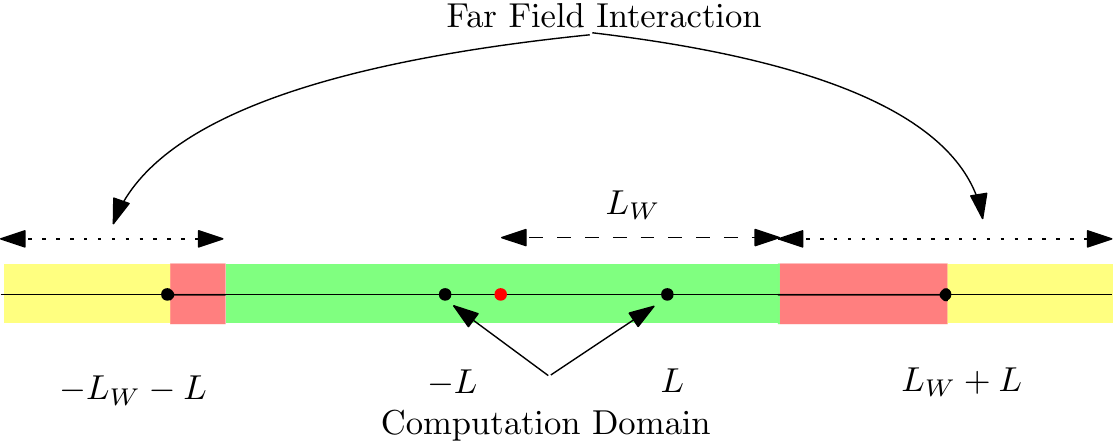}
\caption{To compute $I(x_i)$, we need to compute the near field interaction and local interaction using values of $u(x)$ from the green area. The values of $u(x)$ are provided in the green and red area for computing $I(x)$, $x\in [-L, L]$.}
\label{fig:fig31}
\end{figure}

In sum, we have the formula
\begin{equation}
    (\delta_L u)_i = I_1(x_i) + I_2(x_i) + I_3(x_i) = \bw_i^T \bu + \mathbf{f}_i
\end{equation}
for some vector $\bw_i\in \RR^{|\mathcal{I}|}$, $\mathbf{f}_i\in \RR^{|\mathcal{I}|}$ and 
\begin{equation}
    \bu = (u_i)_{i\in \mathcal{I}}
\end{equation}

Although we have used a different formula for the evaluation of the integral, we should soon realize that in the far-away off-diagonal parts, the entries are still $\nu_j h$~(except on the boundary), which the $\mathcal{H}$ matrix construction routine can still work. 

\begin{remark}
	In this section, we presents an approach to evaluate the singular integral operator  \cref{equ:Ix} where $\nu(y)$ can have singularity at $y=0$ and a heavy tail. A particular example is the fractional Laplacian. However, in practice, it is not easy to obtain $f_x^{L_W}$, especially for higher dimensions. One way to circumvent this difficulty is to enforce $u(\bx)=0$ outside a bounded domain. For example, $u(x)=0$, $\forall x\in [-L_W,L_W]^2$ in this case and thus $f_x^{L_W}=0$. 
	
	We must be cautious about the simple truncation. It was shown~\cite{ros2014dirichlet} that if $u$ is a solution of $(-\Delta)^s u = g$ in $\Omega$, $u\equiv 0$ in $\RR^d\backslash \Omega$ for some $s\in (0,1)$ and $g\in L^\infty(\Omega)$, then $u$ is $C^s(\RR^d)$ and $u/\delta^s|_\Omega$ is $C^\alpha$ up to the boundary $\partial\Omega$ for some $\alpha\in (0,1)$, where $\delta(x)=\mathrm{dist}(x,\partial\Omega)$. This fact indicates that we will usually not expect ``optimal'' convergence of typical numerical schemes if we go for this simplicity. 
\end{remark}

\lib{Numerical Examples}

In this section, we carry out various numerical experiments with a focus on efficiency. The correctness is checked with either numerical results from the direct method or analytical solution. The algorithms are implemented using \texttt{julia-1.0.2} and run on a Ubuntu server with Intel(R) Xeon(R) CPU E7-8890 v3 @ 2.50GHz. 

Highly efficient $\mathcal{H}$-matrix is tricky to implement and depends on the choice of appropriate parameters based on specific kernels. We do not focus on tuning for the optimal parameters but focus on a general and straightforward implementation which can be easily adapted for other kernels. However, we mention that we can indeed improve the efficiency by devoting more effort for individual kernels, such as optimal parameter tuning, adapted rank strategy, and so on~\cite{coulier2017inverse,pouransari2017fast}. Our program only has two parameters $N_{\mathrm{block}}$ and $N_{\min}$ which is described in \Cref{equ:cons}, and in the numerical examples, we show that for large matrices, the efficiency is not sensitive to these parameters. We demonstrate that the general program can work very well compare to the baseline approach.

\subsection{Efficiency of $\mathcal{H}$-Matrix: 1D Case}

In this section, we show the efficiency of the arithmetic operations using the $\mathcal{H}$-matrix in 1D. In the experiment, the minimum block size is 64. The matrix sizes tested are $2^{10}, 2^{11}, \ldots, 2^{20}$. The maximum block size for $2^n\times 2^n$ matrices is $2^{n-2}\times 2^{n-2}$. The rank for off-diagonal approximation is $r=10$, which is quite accurate for the Gaussian kernel we considered. The authors observed that for $2^{17}\times 2^{17}$ matrices, the dense LU will throw \texttt{OutOfMemory} error and therefore for numerical experiments we stopped at $2^{17}\times 2^{17}$ for dense LU. Remarkably, we show that with the $\mathcal{H}$-matrix technique, we are able to LU factorize a one million by one million dense matrix with only 125 seconds without any explicit parallelism effort in \texttt{julia}. 

Consider the model problem 
\begin{equation}\label{equ:model1d}
	\begin{cases}
		u_t = \int_{\RR} (u(x+y)-u(x))e^{-5y^2} dy & (x,t)\in \RR\times (0,1]\\
		u(x,0) = e^{-50|x|^2} &   x\in \RR
	\end{cases}
\end{equation}

We divide the interval $[-1,1]$ into $2^n$ equal length intervals, $h=\frac{1}{2^{n-1}}$. For admissibility condition,  we use $\eta=1$. For low-rank blocks, the rank is fixed to be 10. In fact, the rank can be chosen adaptively; however, we observe that the fixed rank strategy is practical for our cases. 

A typical hierarchical matrix in 1D will have the skeleton shown in \cref{fig:1D}. Here we use a different color for each block. The green block denotes low-rank matrices while the yellow block denotes full matrices. The matrix is arranged into a hierarchical structure, from which $\mathcal{H}$-matrix got its name. 

\begin{figure}[H] 
\centering
\includegraphics[width=0.7\textwidth,keepaspectratio]{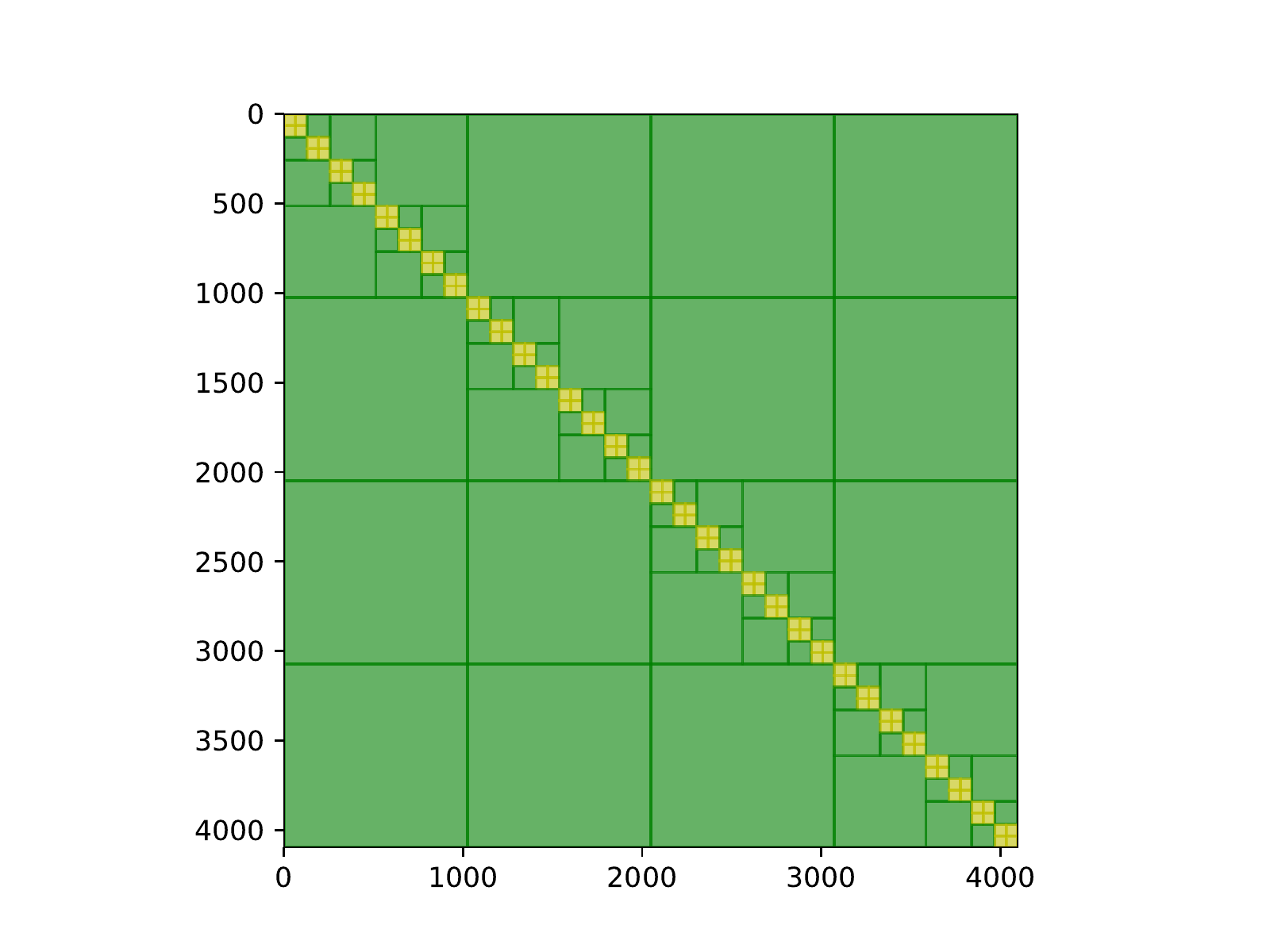}
\caption{1D $\mathcal{H}$-matrix. The green block denotes low-rank matrices while the yellow block denotes full matrices.}
\label{fig:1D}
\end{figure}

The key for maintaining optimal rates while the problem size $N$ becomes large is to control the total number of dense blocks. In principle, the number of dense blocks should grow linearly with problem size, which can be demonstrated by looking at the compression ratio or the number of total blocks~(full dense blocks as well as low-rank blocks).

\paragraph{Matrix Vector Multipliction}

The upper right plot in \cref{fig:g1d} shows the complexity of the matrix-vector multiplication for full matrices and $\mathcal{H}$-matrix. Compared to the dense matrix-vector multiplication, the $\mathcal{H}$-matrix structure lends us great speedup. This enables us to device highly efficient iterative solvers, such as preconditioned conjugate gradient method, which may require many matrix-vector productions during the iterations.

\paragraph{LU Decomposition}

We have already shown that the storage and construction complexity is $\mathcal{O}(N)$ in \cref{fig:construction}. In the lower left plot in \cref{fig:g1d} we also show that the LU decomposition is also much more efficient using the $\mathcal{H}$-matrices. We see that the $\mathcal{H}$-LU has better asymptotic complexity than the dense LU, which has complexity $\mathcal{O}(N^3)$. Note the $\mathcal{H}$-LU decomposition is carried out using high accuracy and can serve as a direct solver for linear systems. We need to point out that although the $\mathcal{H}$-LU tends to beat dense LU in terms of time consumption for large-scale problems, the constant in the asymptotic rate $\mathcal{O}(N)$ is still large, which is well-known in the literature.

\paragraph{Solve}

One crucial step for a successful implicit scheme is to solve the equation $A\bx=\by$. We can, of course, use matrix-free solvers such as PCG. However, in the case that $A$ is ill-conditioned, we may require a good preconditioner. Finding such a preconditioner is not an easy task, especially for the dense matrices, which is not covered by literature as comprehensively as that of sparse counterparts. $\mathcal{H}$-LU lends us a generic way to construct preconditioners or direct solvers. In both cases, we need to factorize $A$ as mentioned, and then solve $A\bx=\by$.

\paragraph{Solution to the Model Problem} 

We apply the $\mathcal{H}$-matrix technique developed in the paper to solve the model problem \cref{equ:model1d}. We first form a $\mathcal{H}$-matrix $H$, as an approximation to the stiffness matrix and LU factorize $H$ to obtain a factorized form $H_1$. $H_1^{-1}$ is then used as a preconditioner for solving the linear system with coefficients matrix $H$. For determine the accuracy of the numerical scheme, we solve the same model problem using an accurate numerical scheme and obtain a reference solution. First, we fix $N_T=100$ and $N=2^{10}$ and apply the Crank Nicolson scheme without $\mathcal{H}$-matrix approximation; later we solve the problem using $\mathcal{H}$-matrix approximation for $N_T=10$, $20$, $\ldots$, $50$, $N = 2^{10}$ and compare the solution at $t=1$ with the reference solution. Next, we fix $N_T=100$, $N=2^{15}$ and obtain a reference solution; we redo the computation with $\mathcal{H}$-matrix approximation with $N=2^8$, $2^9$, $\ldots$, $2^{13}$, $N_T=100$, and compare the solution at $t=1$ with the reference solution~(in this case we need to restrict the reference solution onto a coarser grid for comparison).

\Cref{fig:g1d_dtdx} shows the convergence plots as we increase $N_T$ and $N$. We see a second order convergence in time, which is consistent with our analysis. However, we only see a first order convergence. It is due to the  artificial truncation we have performed for tractable computation. For more details of the reduced convergence issue for nonlocal operators on the bounded domain, see remarks in \cref{sect:extension}.

\begin{figure}[htpb]
\centering
\scalebox{0.8}{
\begin{tikzpicture}

\definecolor{color0}{rgb}{0.12156862745098,0.466666666666667,0.705882352941177}
\definecolor{color1}{rgb}{1,0.498039215686275,0.0549019607843137}

\begin{axis}[
legend cell align={left},
legend entries={{$\mathcal{O}(N_T^{-2})$}},
legend style={draw=white!80.0!black},
tick align=outside,
tick pos=left,
x grid style={white!69.01960784313725!black},
xlabel={$N_T$},
xmin=9.22680834590588, xmax=54.1899193367184,
xmode=log,
y grid style={white!69.01960784313725!black},
ylabel={$||u-u_{ref}||_\infty$},
ymin=1.64796244443072e-05, ymax=0.00120869735091341,
ymode=log
]
\addlegendimage{no markers, color1}
\addplot [semithick, color0, mark=asterisk*, mark size=3, mark options={solid}, forget plot]
table [row sep=\\]{%
10	0.0006624088617109 \\
20	0.000160331204672987 \\
30	6.75272749546818e-05 \\
40	3.50586666398311e-05 \\
50	2.0032579160445e-05 \\
};
\addplot [semithick, color1, dashed]
table [row sep=\\]{%
10	0.000994324208098597 \\
11	0.000821755543883138 \\
12	0.000690502922290693 \\
13	0.000588357519584969 \\
14	0.00050730826943806 \\
15	0.000441921870266043 \\
16	0.000388407893788515 \\
17	0.000344056819411279 \\
18	0.000306890187684752 \\
19	0.000275436068725374 \\
20	0.00024858105202465 \\
21	0.000225470341972471 \\
22	0.000205438885970785 \\
23	0.000187962988298412 \\
24	0.000172625730572673 \\
25	0.000159091873295776 \\
26	0.000147089379896242 \\
27	0.000136395638971001 \\
28	0.000126827067359515 \\
29	0.000118231178133008 \\
30	0.000110480467566511 \\
31	0.000103467659531592 \\
32	9.71019734471287e-05 \\
33	9.1306171542571e-05 \\
34	8.60142048528198e-05 \\
35	8.11693231100896e-05 \\
36	7.6722546921188e-05 \\
37	7.26314249889407e-05 \\
38	6.88590171813434e-05 \\
39	6.53730577316632e-05 \\
40	6.21452630061624e-05 \\
41	5.91507559844495e-05 \\
42	5.63675854931178e-05 \\
43	5.37763227743969e-05 \\
44	5.13597214926962e-05 \\
45	4.91024300295604e-05 \\
46	4.6990747074603e-05 \\
47	4.50124132231144e-05 \\
48	4.31564326431683e-05 \\
49	4.14129199541274e-05 \\
50	3.97729683239439e-05 \\
};
\end{axis}

\end{tikzpicture}}~
\scalebox{0.8}{
\begin{tikzpicture}

\definecolor{color0}{rgb}{0.12156862745098,0.466666666666667,0.705882352941177}
\definecolor{color1}{rgb}{1,0.498039215686275,0.0549019607843137}

\begin{axis}[
legend cell align={left},
legend entries={{$\mathcal{O}(h)$}},
legend style={draw=white!80.0!black},
tick align=outside,
tick pos=left,
x grid style={white!69.01960784313725!black},
xlabel={$\frac{1}{h}$},
xmin=215.269482304951, xmax=9741.98468610228,
xmode=log,
y grid style={white!69.01960784313725!black},
ylabel={$||u-u_{ref}||_\infty$},
ymin=2.19897327155076e-07, ymax=0.0013450742891909,
ymode=log
]
\addlegendimage{no markers, color1}
\addplot [semithick, color0, mark=asterisk*, mark size=3, mark options={solid}, forget plot]
table [row sep=\\]{%
256	0.00090496428235376 \\
512	1.91334421458674e-05 \\
1024	9.71633609885914e-06 \\
2048	4.77446625544563e-06 \\
4096	2.24527093684208e-06 \\
8192	9.65974865369326e-07 \\
};
\addplot [semithick, color1, dashed]
table [row sep=\\]{%
256	1.04588699213203e-05 \\
512	5.22943496066016e-06 \\
1024	2.61471748033008e-06 \\
2048	1.30735874016504e-06 \\
4096	6.5367937008252e-07 \\
8192	3.2683968504126e-07 \\
};
\end{axis}

\end{tikzpicture}}
\caption{1D case \cref{equ:model1d}. \textit{Left}: Convergence as we increase $N_T$. We see a second order convergence in time, which is consistent with our analysis. \textit{Right}: Convergence as we increase $N$. Here we only see a first order convergence. It is due to the  artificial truncation we have performed for tractable computation.}
\label{fig:g1d_dtdx}
\end{figure}
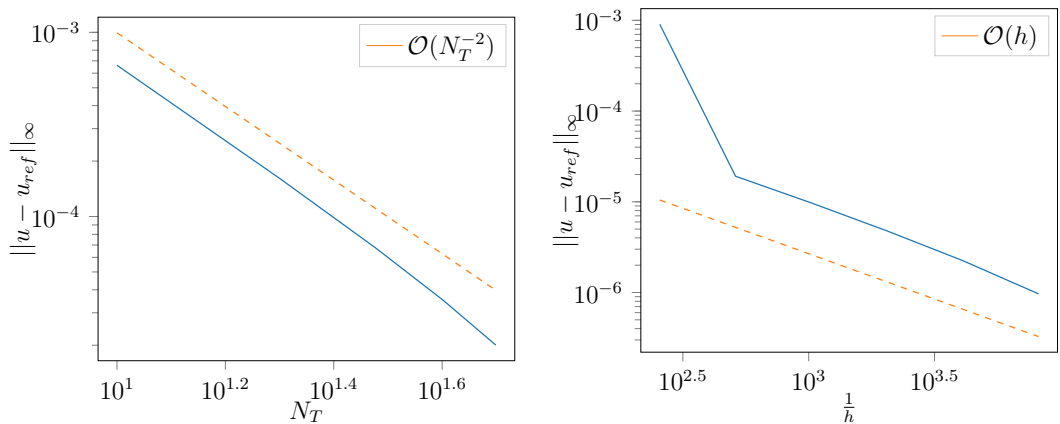

In this numerical experiment, we generate a random vector and record the solving time for both factorized $\mathcal{H}$-matrix and LU factorized dense matrix. The last plot in \cref{fig:g1d} compares the solving time for both the dense matrix and the $\mathcal{H}$-matrix. We see that the $\mathcal{H}$-matrix solving is both faster and has better asymptotic rate than the dense one.

\begin{figure}[htpb]
\centering
\scalebox{0.8}{
\begin{tikzpicture}

\definecolor{color0}{rgb}{0.12156862745098,0.466666666666667,0.705882352941177}
\definecolor{color1}{rgb}{1,0.498039215686275,0.0549019607843137}
\definecolor{color2}{rgb}{0.172549019607843,0.627450980392157,0.172549019607843}
\definecolor{color3}{rgb}{0.83921568627451,0.152941176470588,0.156862745098039}

\begin{axis}[
legend cell align={left},
legend entries={{H-matrix},{Direct},{$O(N^2)$},{$O(N)$}},
legend style={draw=white!80.0!black},
tick align=outside,
tick pos=left,
title={Construction Time},
x grid style={white!69.01960784313725!black},
xlabel={Matrix Size},
xmin=724.077343935024, xmax=1482910.40037893,
xmode=log,
y grid style={white!69.01960784313725!black},
ylabel={Time (sec)},
ymin=0.00220187706027951, ymax=42434.4875567995,
ymode=log
]
\addlegendimage{no markers, color0}
\addlegendimage{no markers, color1}
\addlegendimage{no markers, color2}
\addlegendimage{no markers, color3}
\addplot [semithick, color0, mark=*, mark size=3, mark options={solid}]
table [row sep=\\]{%
1024	0.016117583 \\
2048	0.046379019 \\
4096	0.127069826 \\
8192	0.292432849 \\
16384	0.860845945 \\
32768	2.081860078 \\
65536	4.223938207 \\
131072	8.916075953 \\
262144	20.242718506 \\
524288	39.780334734 \\
1048576	81.861753704 \\
};
\addplot [semithick, color1, mark=*, mark size=3, mark options={solid}]
table [row sep=\\]{%
1024	0.028535442 \\
2048	0.132978801 \\
4096	1.452989489 \\
8192	3.543617585 \\
16384	35.091152046 \\
32768	159.240163905 \\
65536	642.000945215 \\
};
\addplot [semithick, color2, dashed]
table [row sep=\\]{%
1024	4.83310264519679 \\
2048	19.3324105807872 \\
4096	77.3296423231486 \\
8192	309.318569292594 \\
16384	1237.27427717038 \\
32768	4949.09710868151 \\
65536	19796.3884347261 \\
};
\addplot [semithick, color3, dashed]
table [row sep=\\]{%
1024	0.00471982680194999 \\
2048	0.00943965360389999 \\
4096	0.0188793072078 \\
8192	0.0377586144155999 \\
16384	0.0755172288311999 \\
32768	0.1510344576624 \\
65536	0.302068915324799 \\
131072	0.604137830649599 \\
262144	1.2082756612992 \\
524288	2.4165513225984 \\
1048576	4.83310264519679 \\
};
\end{axis}

\end{tikzpicture}}~
\scalebox{0.8}{
\begin{tikzpicture}

\definecolor{color0}{rgb}{0.12156862745098,0.466666666666667,0.705882352941177}
\definecolor{color1}{rgb}{1,0.498039215686275,0.0549019607843137}
\definecolor{color2}{rgb}{0.172549019607843,0.627450980392157,0.172549019607843}
\definecolor{color3}{rgb}{0.83921568627451,0.152941176470588,0.156862745098039}

\begin{axis}[
legend cell align={left},
legend entries={{H-matrix},{Direct},{$O(N^2)$},{$O(N)$}},
legend style={at={(0.03,0.97)}, anchor=north west, draw=white!80.0!black},
tick align=outside,
tick pos=left,
title={Matrix Vector Multiplication},
x grid style={white!69.01960784313725!black},
xlabel={Matrix Size},
xmin=724.077343935024, xmax=1482910.40037893,
xmode=log,
y grid style={white!69.01960784313725!black},
ylabel={Time (sec)},
ymin=6.99959271769686e-05, ymax=8.21602756609917,
ymode=log
]
\addlegendimage{no markers, color0}
\addlegendimage{no markers, color1}
\addlegendimage{no markers, color2}
\addlegendimage{no markers, color3}
\addplot [semithick, color0, mark=*, mark size=3, mark options={solid}]
table [row sep=\\]{%
1024	0.0004577314 \\
2048	0.0010067572 \\
4096	0.0021811736 \\
8192	0.0071558649 \\
16384	0.0199023826 \\
32768	0.043914754 \\
65536	0.0885521418 \\
131072	0.1830574324 \\
262144	0.378938941 \\
524288	0.8072160345 \\
1048576	1.7325115671 \\
};
\addplot [semithick, color1, mark=*, mark size=3, mark options={solid}]
table [row sep=\\]{%
1024	0.0001189895 \\
2048	0.0004108002 \\
4096	0.002661161 \\
8192	0.0134586426 \\
16384	0.0438625761 \\
32768	0.1996369868 \\
65536	0.8092492205 \\
};
\addplot [semithick, color2, dashed]
table [row sep=\\]{%
1024	0.000483310264519679 \\
2048	0.00193324105807872 \\
4096	0.00773296423231486 \\
8192	0.0309318569292594 \\
16384	0.123727427717038 \\
32768	0.494909710868151 \\
65536	1.97963884347261 \\
};
\addplot [semithick, color3, dashed]
table [row sep=\\]{%
1024	0.00471982680194999 \\
2048	0.00943965360389999 \\
4096	0.0188793072078 \\
8192	0.0377586144155999 \\
16384	0.0755172288311999 \\
32768	0.1510344576624 \\
65536	0.302068915324799 \\
131072	0.604137830649599 \\
262144	1.2082756612992 \\
524288	2.4165513225984 \\
1048576	4.83310264519679 \\
};
\end{axis}

\end{tikzpicture}}
\scalebox{0.8}{
\begin{tikzpicture}

\definecolor{color0}{rgb}{0.12156862745098,0.466666666666667,0.705882352941177}
\definecolor{color1}{rgb}{1,0.498039215686275,0.0549019607843137}
\definecolor{color2}{rgb}{0.172549019607843,0.627450980392157,0.172549019607843}
\definecolor{color3}{rgb}{0.83921568627451,0.152941176470588,0.156862745098039}

\begin{axis}[
legend cell align={left},
legend entries={{H-matrix},{Direct},{$O(N^3)$},{$O(N)$}},
legend style={at={(0.03,0.97)}, anchor=north west, draw=white!80.0!black},
tick align=outside,
tick pos=left,
title={LU},
x grid style={white!69.01960784313725!black},
xlabel={Matrix Size},
xmin=724.077343935024, xmax=1482910.40037893,
xmode=log,
y grid style={white!69.01960784313725!black},
ylabel={Time (sec)},
ymin=0.00224889443808227, ymax=27228.4480947519,
ymode=log
]
\addlegendimage{no markers, color0}
\addlegendimage{no markers, color1}
\addlegendimage{no markers, color2}
\addlegendimage{no markers, color3}
\addplot [semithick, color0, mark=*, mark size=3, mark options={solid}]
table [row sep=\\]{%
1024	0.033150665 \\
2048	0.089554581 \\
4096	0.215134275 \\
8192	0.474807033 \\
16384	1.057615582 \\
32768	2.172390986 \\
65536	4.970435166 \\
131072	10.88718557 \\
262144	23.683602644 \\
524288	59.203750851 \\
1048576	125.118831592 \\
};
\addplot [semithick, color1, mark=*, mark size=3, mark options={solid}]
table [row sep=\\]{%
1024	0.012532439 \\
2048	0.068024574 \\
4096	0.424812388 \\
8192	1.974486374 \\
16384	14.869242377 \\
32768	108.248607665 \\
65536	835.887055881 \\
};
\addplot [semithick, color2, dashed]
table [row sep=\\]{%
1024	0.0494909710868151 \\
2048	0.395927768694522 \\
4096	3.16742214955616 \\
8192	25.3393771964493 \\
16384	202.715017571595 \\
32768	1621.72014057276 \\
65536	12973.761124582 \\
};
\addplot [semithick, color3, dashed]
table [row sep=\\]{%
1024	0.00471982680194999 \\
2048	0.00943965360389999 \\
4096	0.0188793072078 \\
8192	0.0377586144155999 \\
16384	0.0755172288311999 \\
32768	0.1510344576624 \\
65536	0.302068915324799 \\
131072	0.604137830649599 \\
262144	1.2082756612992 \\
524288	2.4165513225984 \\
1048576	4.83310264519679 \\
};
\end{axis}

\end{tikzpicture}}~
\scalebox{0.8}{
\begin{tikzpicture}

\definecolor{color0}{rgb}{0.12156862745098,0.466666666666667,0.705882352941177}
\definecolor{color1}{rgb}{1,0.498039215686275,0.0549019607843137}
\definecolor{color2}{rgb}{0.172549019607843,0.627450980392157,0.172549019607843}
\definecolor{color3}{rgb}{0.83921568627451,0.152941176470588,0.156862745098039}

\begin{axis}[
legend cell align={left},
legend entries={{H-matrix},{Direct},{$O(N^2)$},{$O(N)$}},
legend style={at={(0.03,0.97)}, anchor=north west, draw=white!80.0!black},
tick align=outside,
tick pos=left,
title={Solve},
x grid style={white!69.01960784313725!black},
xlabel={Matrix Size},
xmin=724.077343935024, xmax=1482910.40037893,
xmode=log,
y grid style={white!69.01960784313725!black},
ylabel={Time (sec)},
ymin=0.000252795812344401, ymax=7.7286721108175,
ymode=log
]
\addlegendimage{no markers, color0}
\addlegendimage{no markers, color1}
\addlegendimage{no markers, color2}
\addlegendimage{no markers, color3}
\addplot [semithick, color0, mark=*, mark size=3, mark options={solid}]
table [row sep=\\]{%
1024	0.0006127878 \\
2048	0.0013368844 \\
4096	0.0032873561 \\
8192	0.0069357575 \\
16384	0.0206869708 \\
32768	0.0408734937 \\
65536	0.0881179885 \\
131072	0.177369463 \\
262144	0.3553275244 \\
524288	0.7210637536 \\
1048576	1.491324965 \\
};
\addplot [semithick, color1, mark=*, mark size=3, mark options={solid}]
table [row sep=\\]{%
1024	0.0004042488 \\
2048	0.0014980889 \\
4096	0.0147092541 \\
8192	0.0599093233 \\
16384	0.2397168851 \\
32768	1.0325378595 \\
65536	3.984981354 \\
};
\addplot [semithick, color2, dashed]
table [row sep=\\]{%
1024	0.000483310264519679 \\
2048	0.00193324105807872 \\
4096	0.00773296423231486 \\
8192	0.0309318569292594 \\
16384	0.123727427717038 \\
32768	0.494909710868151 \\
65536	1.97963884347261 \\
};
\addplot [semithick, color3, dashed]
table [row sep=\\]{%
1024	0.00471982680194999 \\
2048	0.00943965360389999 \\
4096	0.0188793072078 \\
8192	0.0377586144155999 \\
16384	0.0755172288311999 \\
32768	0.1510344576624 \\
65536	0.302068915324799 \\
131072	0.604137830649599 \\
262144	1.2082756612992 \\
524288	2.4165513225984 \\
1048576	4.83310264519679 \\
};
\end{axis}

\end{tikzpicture}}
\caption{1D case. \textit{Upper left}: Same as the second plot in \cref{fig:construction}. \textit{Upper right}: Matrix vector multiplication is also much more efficient using the $\mathcal{H}$-matrices than using the dense matrix. It has the asymptotic complexity rate approximately $\mathcal{O}(N)$, compared to $\mathcal{O}(N^2)$ for dense matrices. \textit{Lower left}: LU decomposition of $\mathcal{H}$-LU and the dense LU. The $\mathcal{H}$-LU has linear asymptotic complexity, where the dense LU has complexity $\mathcal{O}(N^3)$.  \textit{Lower right}: Solving time for both the dense matrix and the $\mathcal{H}$-matrix. The $\mathcal{H}$-matrix solving is both faster and has an asymptotic rate that is approximately linear.}
\label{fig:g1d}
\end{figure}
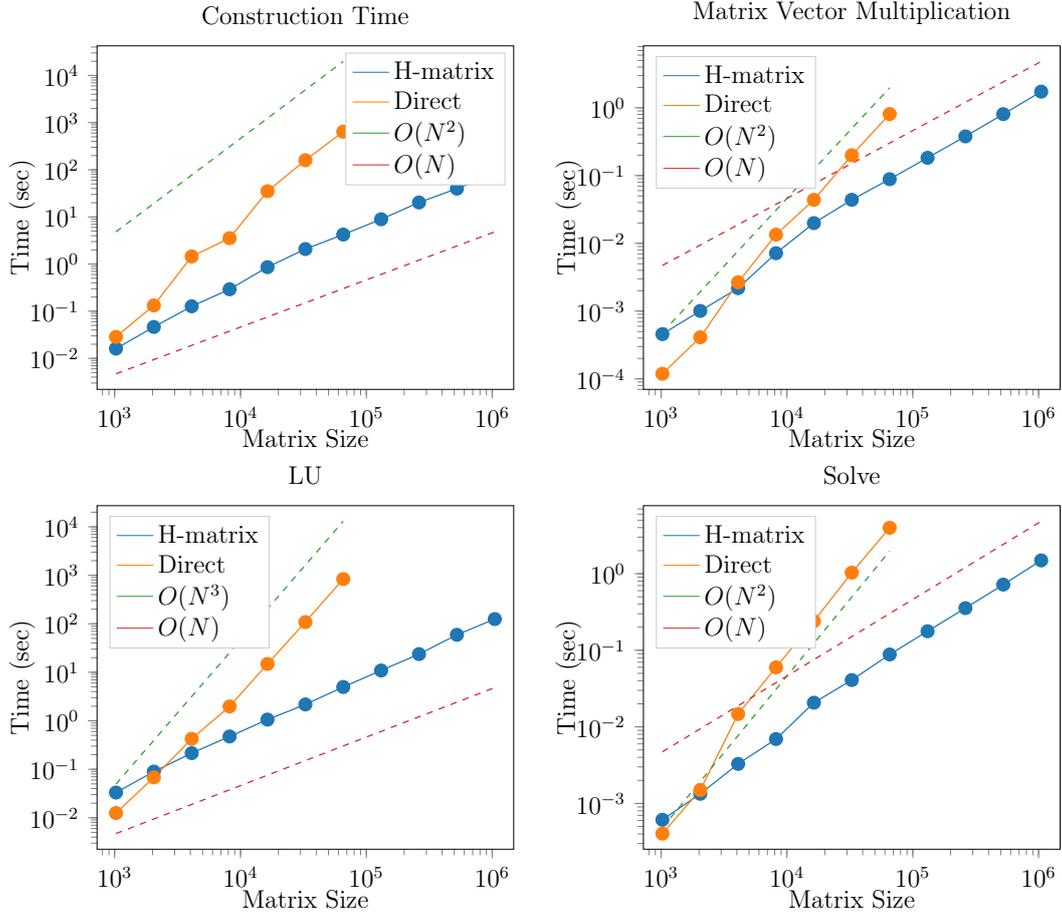

\subsection{Efficiency of $\mathcal{H}$-Matrix: 2D Case}

We mention that the $\mathcal{H}$-matrix technique also works well in 2D. We consider the model problem
\begin{equation}\label{equ:model2d}
	\begin{cases}
		u_t = \int_{\RR^2} (u(\bx+\by)-u(\bx))e^{-5\by^2} d\by & (\bx,t)\in \RR^2\times (0,1]\\
		u(\bx,0) = e^{-50|\bx|^2} &   \bx\in \RR^2
	\end{cases}
\end{equation}

We truncate the computational domain to $[-1,1]$ by imposing the homogeneous Dirichlet boundary condition $u(\bx)=0$, $\forall\bx\in ([-1,1]^2)^c$. In this case, we divide $[-1,1]^2$ into $2^n\times 2^n$ equal size squares and let $h=\frac{1}{2^{n-1}}$. In the construction of the $\mathcal{H}$-matrix, we use a fixed rank strategy and let $r=10$.


We perform the same comparison as that in the last section. \Cref{fig:g2d} shows that the $\mathcal{H}$-matrix technique has better asymptotic rate than that of the dense matrices concerning time consumption. $\mathcal{H}$-matrix will have a great advantage over the dense matrices over the dense matrices for large-scale problems. 
\begin{figure}[htpb]
\centering
\scalebox{0.8}{
\begin{tikzpicture}

\definecolor{color0}{rgb}{0.12156862745098,0.466666666666667,0.705882352941177}
\definecolor{color1}{rgb}{1,0.498039215686275,0.0549019607843137}
\definecolor{color2}{rgb}{0.172549019607843,0.627450980392157,0.172549019607843}
\definecolor{color3}{rgb}{0.83921568627451,0.152941176470588,0.156862745098039}

\begin{axis}[
legend cell align={left},
legend entries={{H-matrix},{Direct},{$O(N^2)$},{$O(N)$}},
legend style={at={(0.03,0.97)}, anchor=north west, draw=white!80.0!black},
tick align=outside,
tick pos=left,
title={Construction Time},
x grid style={white!69.01960784313725!black},
xlabel={Matrix Size},
xmin=168.89701257893, xmax=1589344.0144452,
xmode=log,
y grid style={white!69.01960784313725!black},
ylabel={Time (sec)},
ymin=0.00170187859347138, ymax=150702.501506238,
ymode=log
]
\addlegendimage{no markers, color0}
\addlegendimage{no markers, color1}
\addlegendimage{no markers, color2}
\addlegendimage{no markers, color3}
\addplot [semithick, color0, mark=*, mark size=3, mark options={solid}]
table [row sep=\\]{%
256	0.011541026 \\
1024	0.279334639 \\
4096	1.709158927 \\
16384	9.160934275 \\
65536	48.77634445 \\
262144	272.110659505 \\
1048576	1390.282231436 \\
};
\addplot [semithick, color1, mark=*, mark size=3, mark options={solid}]
table [row sep=\\]{%
1024	0.505909355 \\
4096	9.482454493 \\
16384	82.612928767 \\
65536	1413.516021609 \\
};
\addplot [semithick, color2, dashed]
table [row sep=\\]{%
1024	16.0149105928211 \\
4096	256.238569485137 \\
16384	4099.8171117622 \\
65536	65597.0737881952 \\
};
\addplot [semithick, color3, dashed]
table [row sep=\\]{%
256	0.00390989028145047 \\
1024	0.0156395611258019 \\
4096	0.0625582445032074 \\
16384	0.25023297801283 \\
65536	1.00093191205132 \\
262144	4.00372764820528 \\
1048576	16.0149105928211 \\
};
\end{axis}

\end{tikzpicture}}~
\scalebox{0.8}{
\begin{tikzpicture}

\definecolor{color0}{rgb}{0.12156862745098,0.466666666666667,0.705882352941177}
\definecolor{color1}{rgb}{1,0.498039215686275,0.0549019607843137}
\definecolor{color2}{rgb}{0.172549019607843,0.627450980392157,0.172549019607843}
\definecolor{color3}{rgb}{0.83921568627451,0.152941176470588,0.156862745098039}

\begin{axis}[
legend cell align={left},
legend entries={{H-matrix},{Direct},{$O(N^2)$},{$O(N)$}},
legend style={at={(0.03,0.97)}, anchor=north west, draw=white!80.0!black},
tick align=outside,
tick pos=left,
title={Matrix Vector Multiplication},
x grid style={white!69.01960784313725!black},
xlabel={Matrix Size},
xmin=168.89701257893, xmax=1589344.0144452,
xmode=log,
y grid style={white!69.01960784313725!black},
ylabel={Time (sec)},
ymin=3.63621989286145e-05, ymax=29.7358557499936,
ymode=log
]
\addlegendimage{no markers, color0}
\addlegendimage{no markers, color1}
\addlegendimage{no markers, color2}
\addlegendimage{no markers, color3}
\addplot [semithick, color0, mark=*, mark size=3, mark options={solid}]
table [row sep=\\]{%
256	6.75159e-05 \\
1024	0.0007449968 \\
4096	0.0048214474 \\
16384	0.0404143523 \\
65536	0.2325246504 \\
262144	1.327147896 \\
1048576	6.657581154 \\
};
\addplot [semithick, color1, mark=*, mark size=3, mark options={solid}]
table [row sep=\\]{%
1024	0.0001598307 \\
4096	0.1495839336 \\
16384	0.0454957939 \\
65536	0.8845565423 \\
};
\addplot [semithick, color2, dashed]
table [row sep=\\]{%
1024	0.00160149105928211 \\
4096	0.0256238569485137 \\
16384	0.40998171117622 \\
65536	6.55970737881952 \\
};
\addplot [semithick, color3, dashed]
table [row sep=\\]{%
256	0.00390989028145047 \\
1024	0.0156395611258019 \\
4096	0.0625582445032074 \\
16384	0.25023297801283 \\
65536	1.00093191205132 \\
262144	4.00372764820528 \\
1048576	16.0149105928211 \\
};
\end{axis}

\end{tikzpicture}}
\scalebox{0.8}{
\begin{tikzpicture}

\definecolor{color0}{rgb}{0.12156862745098,0.466666666666667,0.705882352941177}
\definecolor{color1}{rgb}{1,0.498039215686275,0.0549019607843137}
\definecolor{color2}{rgb}{0.172549019607843,0.627450980392157,0.172549019607843}
\definecolor{color3}{rgb}{0.83921568627451,0.152941176470588,0.156862745098039}

\begin{axis}[
legend cell align={left},
legend entries={{H-matrix},{Direct},{$O(N^3)$},{$O(N)$}},
legend style={at={(0.03,0.97)}, anchor=north west, draw=white!80.0!black},
tick align=outside,
tick pos=left,
title={LU},
x grid style={white!69.01960784313725!black},
xlabel={Matrix Size},
xmin=168.89701257893, xmax=1589344.0144452,
xmode=log,
y grid style={white!69.01960784313725!black},
ylabel={Time (sec)},
ymin=0.0020855703201998, ymax=1564.81877132579,
ymode=log
]
\addlegendimage{no markers, color0}
\addlegendimage{no markers, color1}
\addlegendimage{no markers, color2}
\addlegendimage{no markers, color3}
\addplot [semithick, color0, mark=*, mark size=3, mark options={solid}]
table [row sep=\\]{%
256	0.00385728 \\
1024	0.232011202 \\
4096	1.085102725 \\
16384	4.410872722 \\
65536	22.965052433 \\
262144	95.64710674 \\
1048576	452.012639607 \\
};
\addplot [semithick, color1, mark=*, mark size=3, mark options={solid}]
table [row sep=\\]{%
1024	0.045030348 \\
4096	0.676154922 \\
16384	13.478212619 \\
65536	846.072773034 \\
};
\addplot [semithick, color2, dashed]
table [row sep=\\]{%
1024	0.0160149105928211 \\
4096	0.256238569485137 \\
16384	4.0998171117622 \\
65536	65.5970737881952 \\
};
\addplot [semithick, color3, dashed]
table [row sep=\\]{%
256	0.00390989028145047 \\
1024	0.0156395611258019 \\
4096	0.0625582445032074 \\
16384	0.25023297801283 \\
65536	1.00093191205132 \\
262144	4.00372764820528 \\
1048576	16.0149105928211 \\
};
\end{axis}

\end{tikzpicture}}~
\scalebox{0.8}{
\begin{tikzpicture}

\definecolor{color0}{rgb}{0.12156862745098,0.466666666666667,0.705882352941177}
\definecolor{color1}{rgb}{1,0.498039215686275,0.0549019607843137}
\definecolor{color2}{rgb}{0.172549019607843,0.627450980392157,0.172549019607843}
\definecolor{color3}{rgb}{0.83921568627451,0.152941176470588,0.156862745098039}

\begin{axis}[
legend cell align={left},
legend entries={{H-matrix},{Direct},{$O(N^2)$},{$O(N)$}},
legend style={at={(0.03,0.97)}, anchor=north west, draw=white!80.0!black},
tick align=outside,
tick pos=left,
title={Solve},
x grid style={white!69.01960784313725!black},
xlabel={Matrix Size},
xmin=168.89701257893, xmax=1589344.0144452,
xmode=log,
y grid style={white!69.01960784313725!black},
ylabel={Time (sec)},
ymin=0.000229767637647198, ymax=119.309419715404,
ymode=log
]
\addlegendimage{no markers, color0}
\addlegendimage{no markers, color1}
\addlegendimage{no markers, color2}
\addlegendimage{no markers, color3}
\addplot [semithick, color0, mark=*, mark size=3, mark options={solid}]
table [row sep=\\]{%
256	0.0016703639 \\
1024	0.00119636 \\
4096	0.0067007124 \\
16384	0.0265675788 \\
65536	0.1035973081 \\
262144	0.3101372585 \\
1048576	1.2069433433 \\
};
\addplot [semithick, color1, mark=*, mark size=3, mark options={solid}]
table [row sep=\\]{%
1024	0.0004179065 \\
4096	0.0173444921 \\
16384	0.2419952142 \\
65536	4.0719940769 \\
};
\addplot [semithick, color2, dashed]
table [row sep=\\]{%
1024	0.0160149105928211 \\
4096	0.256238569485137 \\
16384	4.0998171117622 \\
65536	65.5970737881952 \\
};
\addplot [semithick, color3, dashed]
table [row sep=\\]{%
256	0.00390989028145047 \\
1024	0.0156395611258019 \\
4096	0.0625582445032074 \\
16384	0.25023297801283 \\
65536	1.00093191205132 \\
262144	4.00372764820528 \\
1048576	16.0149105928211 \\
};
\end{axis}

\end{tikzpicture}}
\caption{2D case. Comparison of construction time, matrix-vector multiplication time, LU decomposition, and solving for both dense matrices as well as $\mathcal{H}$-matrices. The $\mathcal{H}$-matrix technique has linear asymptotic rate. The green dashed line shows the theoretical complexity asymptotic rate for dense matrices while the red line represents the theoretical complexity asymptotic rate for $\mathcal{H}$-matrix.}
\label{fig:g2d}
\end{figure}
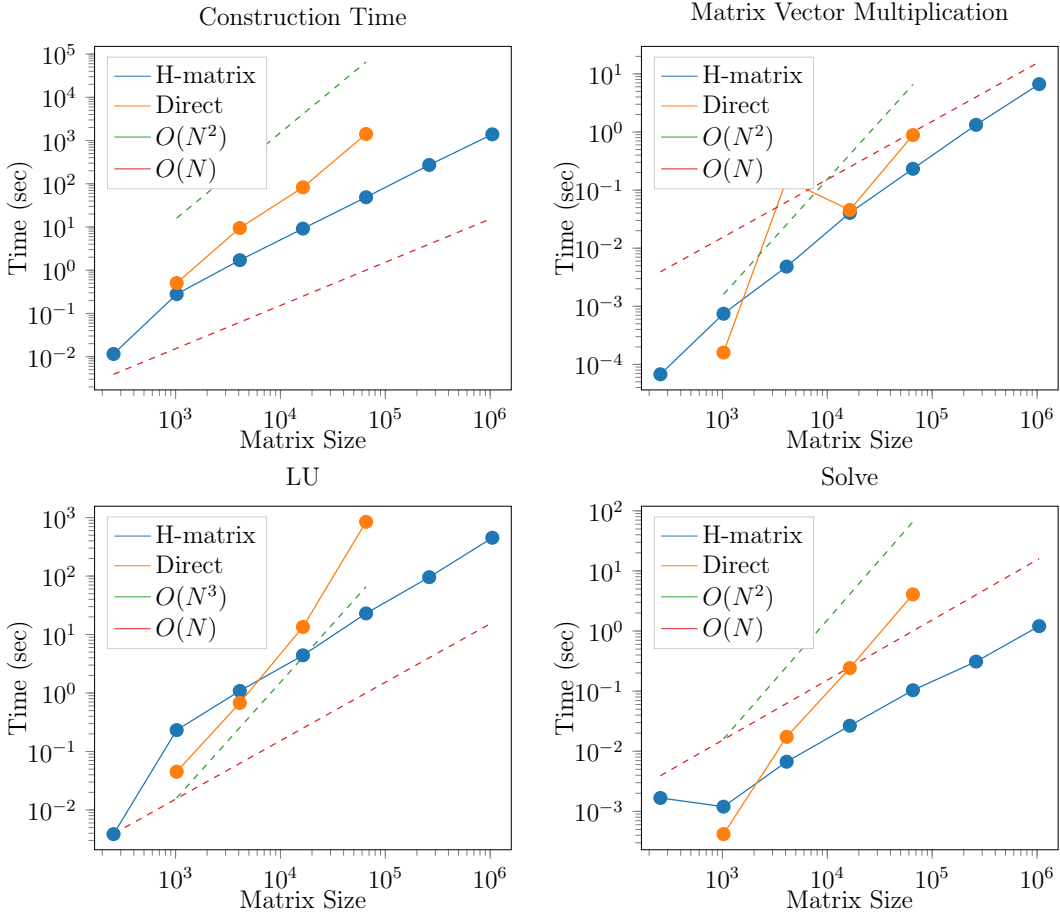

We finally investigate the accuracy of the overall numerical scheme. \Cref{fig:g2d_dxdt} shows the convergence plots for the 2D model \cref{equ:model2d}. Similar to the 1D case, we see second order convergence in time and first order convergence in space.

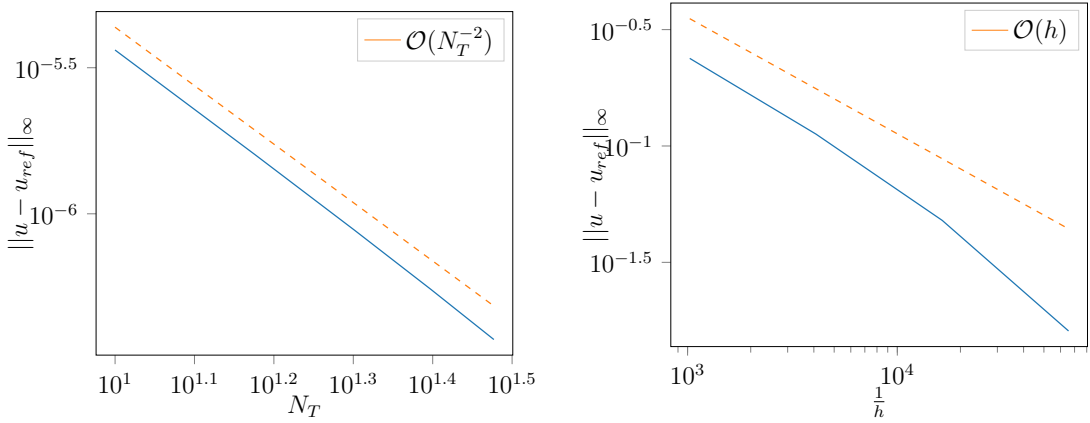
\begin{figure}[htpb]
\centering
\scalebox{0.8}{
\begin{tikzpicture}

\definecolor{color0}{rgb}{0.12156862745098,0.466666666666667,0.705882352941177}
\definecolor{color1}{rgb}{1,0.498039215686275,0.0549019607843137}

\begin{axis}[
legend cell align={left},
legend entries={{$\mathcal{O}(N_T^{-2})$}},
legend style={draw=white!80.0!black},
tick align=outside,
tick pos=left,
x grid style={white!69.01960784313725!black},
xlabel={$N_T$},
xmin=9.46550822640159, xmax=31.6940192564861,
xmode=log,
y grid style={white!69.01960784313725!black},
ylabel={$||u-u_{ref}||_\infty$},
ymin=3.28324272508207e-07, ymax=4.92216172063949e-06,
ymode=log
]
\addlegendimage{no markers, color1}
\addplot [semithick, color0, mark=asterisk*, mark size=3, mark options={solid}, forget plot]
table [row sep=\\]{%
10	3.63641641440426e-06 \\
15	1.59556247035508e-06 \\
20	8.81397490049007e-07 \\
25	5.50864627330006e-07 \\
30	3.71322143824515e-07 \\
};
\addplot [semithick, color1, dashed]
table [row sep=\\]{%
10	4.35219173694216e-06 \\
15	1.93430743864096e-06 \\
20	1.08804793423554e-06 \\
25	6.96350677910747e-07 \\
30	4.83576859660241e-07 \\
};
\end{axis}

\end{tikzpicture}}~
\scalebox{0.8}{
\begin{tikzpicture}

\definecolor{color0}{rgb}{0.12156862745098,0.466666666666667,0.705882352941177}
\definecolor{color1}{rgb}{1,0.498039215686275,0.0549019607843137}

\begin{axis}[
legend cell align={left},
legend entries={{$\mathcal{O}(h)$}},
legend style={draw=white!80.0!black},
tick align=outside,
tick pos=left,
x grid style={white!69.01960784313725!black},
xlabel={$\frac{1}{h}$},
xmin=831.746453868785, xmax=80684.2802729723,
xmode=log,
y grid style={white!69.01960784313725!black},
ylabel={$||u-u_{ref}||_\infty$},
ymin=0.0137424356450106, ymax=0.411334444864694,
ymode=log
]
\addlegendimage{no markers, color1}
\addplot [semithick, color0, mark=asterisk*, mark size=3, mark options={solid}, forget plot]
table [row sep=\\]{%
1024	0.23793984337199 \\
4096	0.11245101302681 \\
16384	0.0478750325889632 \\
65536	0.0160383834381034 \\
};
\addplot [semithick, color1, dashed]
table [row sep=\\]{%
1024	0.352450554567716 \\
4096	0.176225277283858 \\
16384	0.088112638641929 \\
65536	0.0440563193209645 \\
};
\end{axis}

\end{tikzpicture}}
\caption{2D case \cref{equ:model2d}. Similar to the 1D case, we see second order convergence in time and first order convergence in space.}
\label{fig:g2d_dxdt}
\end{figure}

\subsection{Singular and Slow Decaying L\'evy Measure}

Finally, we consider the case where $\nu(y)$ grows to infinity at $y=0$ and has a heavy tail. The case is quite challenging and extensively studied by the community nowadays. For simplicity, we will consider the specific case where $\nu(\bx) = \frac{C_{d, s}}{|\bx|^{d+2s}}$, i.e., the fractional Laplacian~\cite{lischke2018fractional}. Fortunately, we can compute the analytical nonlocal derivative or gradient for some functions. 

For the first example, we consider $u(x)=\exp(-x^2)$ in 1D. Then we have~\cite{huang2016finite}
\begin{equation}\label{equ:u0}
    (-\Delta)^s u(0) = 2^{2s}\Gamma\left( \frac{1+2s}{2} \right)/\sqrt{\pi}
\end{equation}

For this example, since $u(x)$ decay to zero exponentially, we can assume that the far-field interaction $f^{L_W}_x\equiv 0$ for sufficiently large $L_W$. The numerical value is computed using \cref{equ:I1approx,equ:I2approx,equ:I3approx} and compared with the exact value \cref{equ:u0}. The parameters are: $L_W=5.0$, $r=0.2$. The convergence plot is shown in \cref{fig:fig5}. We can see that the error converges like or better than $\mathcal{O}(h^2)$. 

\begin{figure}[H] 
\centering
\includegraphics[width=0.7\textwidth,keepaspectratio]{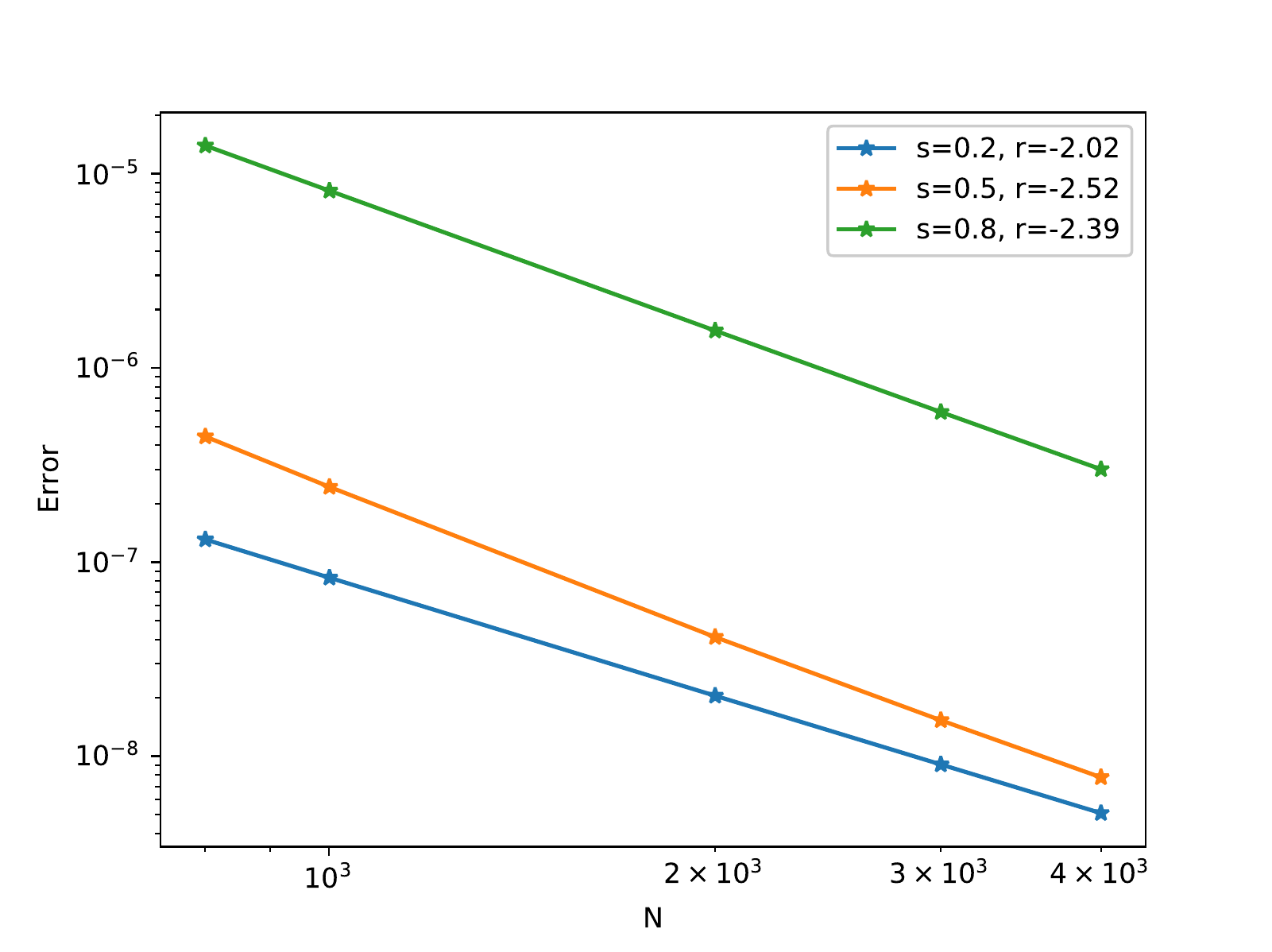}
\caption{Numerical error for approximating \cref{equ:u0}. The error converges like or better than $\mathcal{O}(h^2)$. Here $r$ is the convergence rate.}
\label{fig:fig5}
\end{figure}

We also test the scheme on a challenging problem: the fractional Poisson problem. The PDE
\begin{equation}
    \begin{cases}
        (-\Delta)^s u(x) = 1 & x\in [-1,1]\\
        u(x) = 0 & x\not\in [-1,1]
    \end{cases}
\end{equation}
has a unique solution
\begin{equation}
    u(x) = \frac{2^{2s}\Gamma(1+s)\Gamma\left( \frac{1+2s}{2} \right) }{\Gamma(1/2)} (1-x^2)^s
\end{equation}
Note that $u(x)$ is not smooth across the boundary. In fact, it only belongs to $C^{0,s}([-1,1])$, the $s$-order H\"older space. Numerical algorithms usually exhibit reduced convergence.  We use $L=1.0$ and $L_W=2.0$ so that the support of $u(x)$ is included in the near-field or local interaction. Thus we have $f_x^{L_W}=0$. Since the current implementation only supports forward computation of the nonlocal operator, i.e., given function values, the nonlocal derivative or gradient is computed, we resort to a conjugate gradient approach for recovering $u(x)$ in $[-L,L]$. 

\Cref{fig:fig6} presents the finite difference result obtained from our discretization. We can see that the convergence order is $1.0$ or less, much worse than the Poisson problem where $\mathcal{O}(h^2)$ convergence rate is typical. We need to emphasize this is a universal problem faced by many fractional Laplacian models if a simple truncation method is used. 

\begin{figure}[H] 
\centering
\includegraphics[width=0.45\textwidth,keepaspectratio]{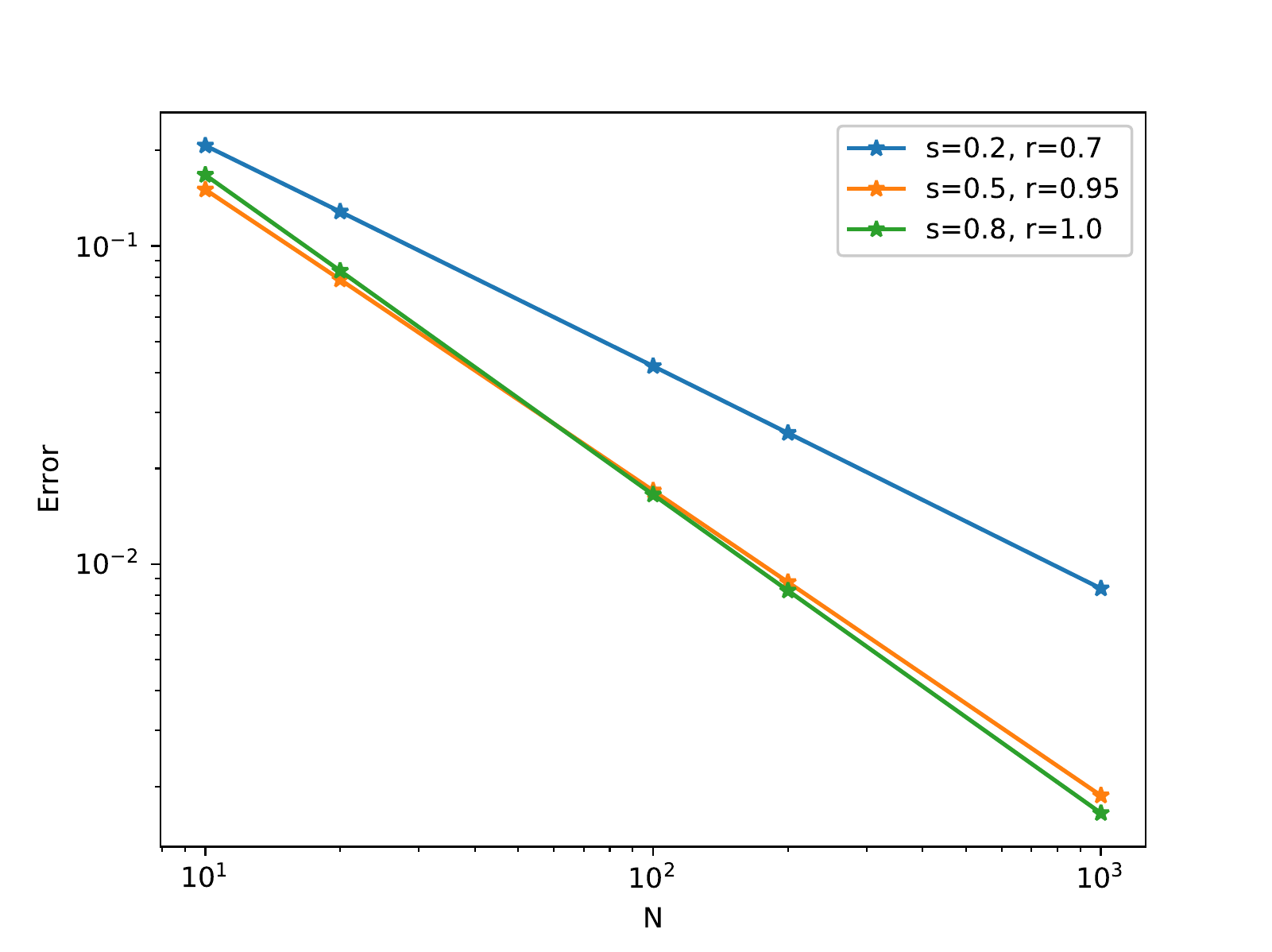}
\includegraphics[width=0.45\textwidth,keepaspectratio]{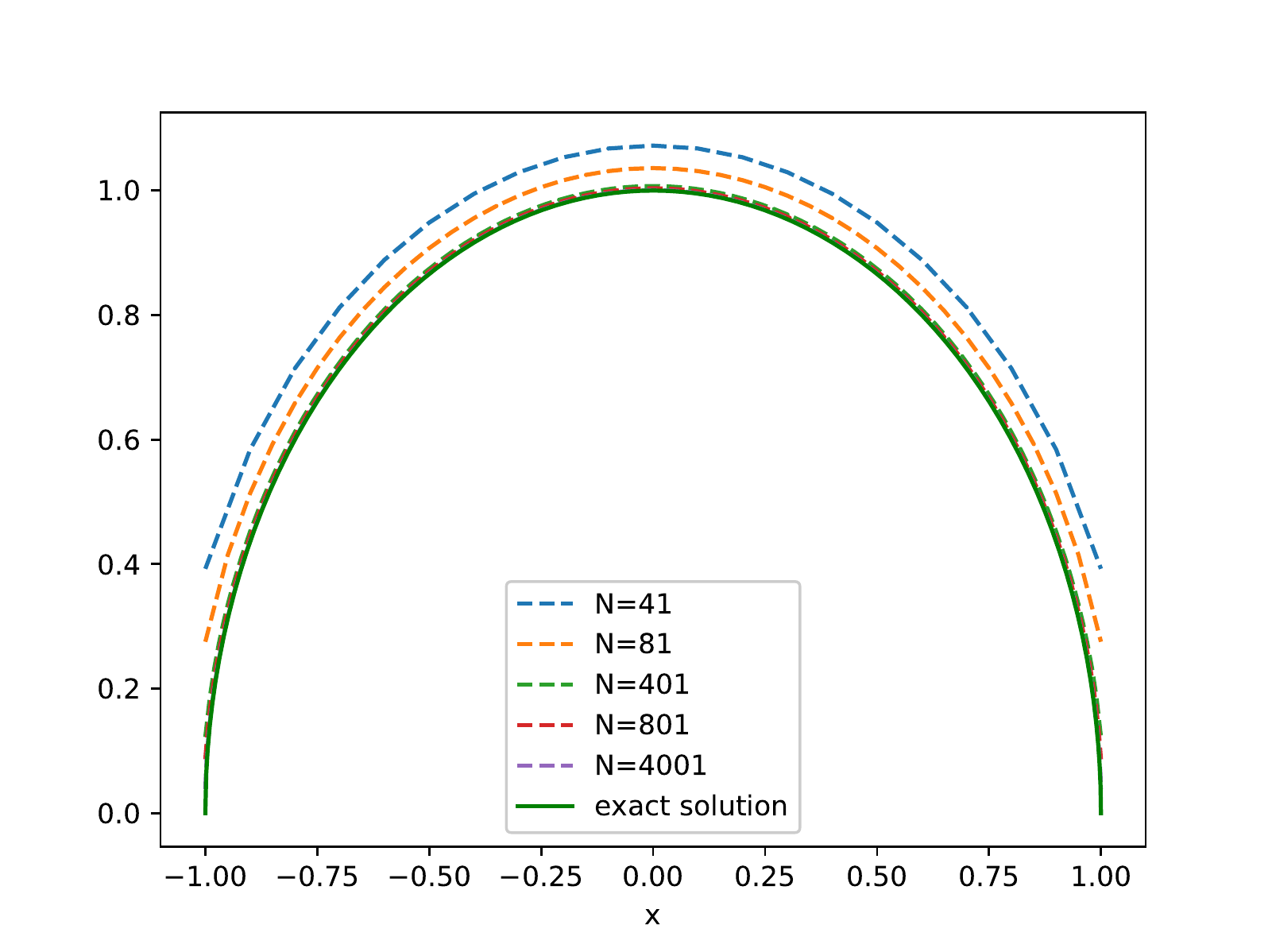}
\caption{The finite difference result obtained from our discretization. The convergence order is $1.0$ or less, much worse than the Poisson problem where $\mathcal{O}(h^2)$ convergence rate is typical}
\label{fig:fig6}
\end{figure}

Finally, we also consider the computation of $(-\Delta)^s u(\bx)$ in 2D, where~\cite{huang2016finite}
\begin{equation}
    u(\bx) = \frac{1}{2^{2s}\Gamma(1+s)^2} (1-|\bx|^2)^s_+
\end{equation}

The analytically result is known for $|\bx|\leq 1$, which is
\begin{equation}\label{equ:u1}
    (-\Delta)^s u(\bx) = 1\quad |\bx|\leq 1
\end{equation}
The numerical result is shown in \cref{fig:fig8}. Near the boundary, due to the non-smoothness of $u(\bx)$, the algorithm has a hard time computing the nonlocal gradient, and therefore we see the oscillatory behavior. However, the computation for the region near the center is good, which does not suffer much from the far-away contribution from nonsmooth boundaries. In the center, the error is only $4\times 10^{-2}$. We used $L_W=2.0$ and $L=1.0$ in this case.  

\begin{figure}[H] 
\centering
\includegraphics[width=0.8\textwidth,keepaspectratio]{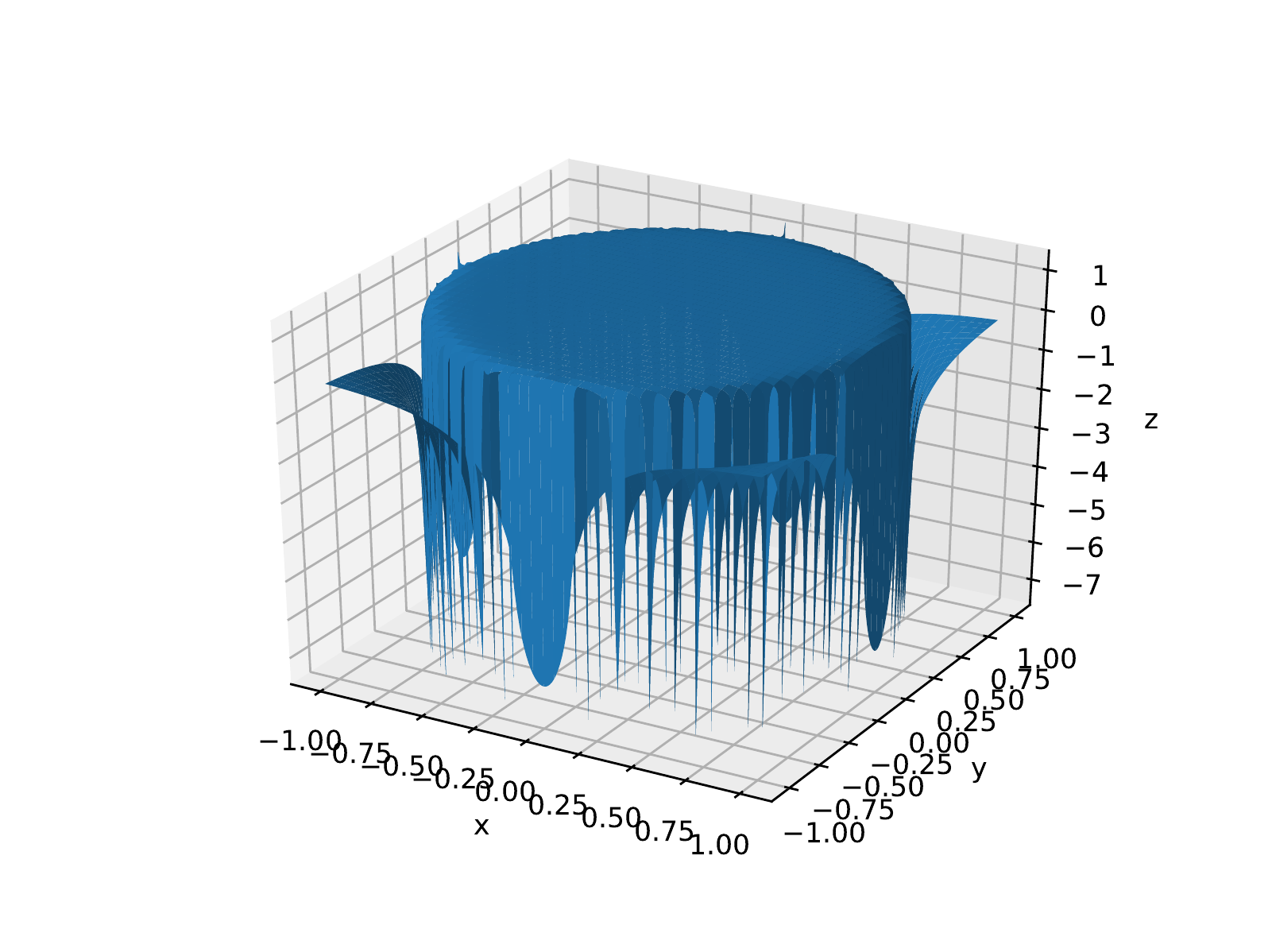}
\caption{Numerical evaluation of \cref{equ:u1}. Near the boundary, due to the nonsmoothness of $u(\bx)$, the algorithm is having a hard time computing the nonlocal gradient and therefore we see the oscillatory behavior. However, the computation for the region near the center is  good, which does not suffer much from the far-away contribution from nonsmooth boundaries. In the center the error is only $4\%$.}
\label{fig:fig8}
\end{figure}

These numerical examples demonstrate that the numerical scheme also works for $\nu(y)$ which has heavy tails. 

\subsection{Application: Variable-Order Fractional Poisson Equation}

In this section, we consider a variable-order space-fractional Poisson equation on a L-shaped domain. 
\begin{equation}\label{equ:2dmodel}
\begin{cases}
	-(-\Delta)^{s(\bx)}u(\bx) = f(\bx)& \bx\in \Omega\\
	u(\bx) = 0& \bx\in \Omega^c
\end{cases}
\end{equation}
Here $\Omega=[-1,1]^2\backslash [0,1]^2$ and
\begin{align}
	s(\bx) =& 0.9-0.8d(\bx)\\
	f(\bx) = & e^{-10\|\bx-\bx_0\|^2} + e^{-10\|\bx-\bx_1\|^2} + e^{-10\|\bx-\bx_2\|^2} 
\end{align}
where $d(x)$ is the distance between $\bx$ and $\partial\Omega$ and 
\begin{equation}
	\bx_0 = \begin{bmatrix}
		-0.5\\
		0.5
	\end{bmatrix}\quad\bx_1 = \begin{bmatrix}
		0.5\\
		-0.5
	\end{bmatrix}\quad
\bx_2 = \begin{bmatrix}
		-0.5\\
		-0.5
	\end{bmatrix}
\end{equation}
Note that $s(x)\in (0,1)$. \Cref{fig:fs} shows the plot of $f(x)$ and $s(x)$. 

\begin{figure}[htpb]
\centering
\includegraphics[width=0.5\textwidth,keepaspectratio]{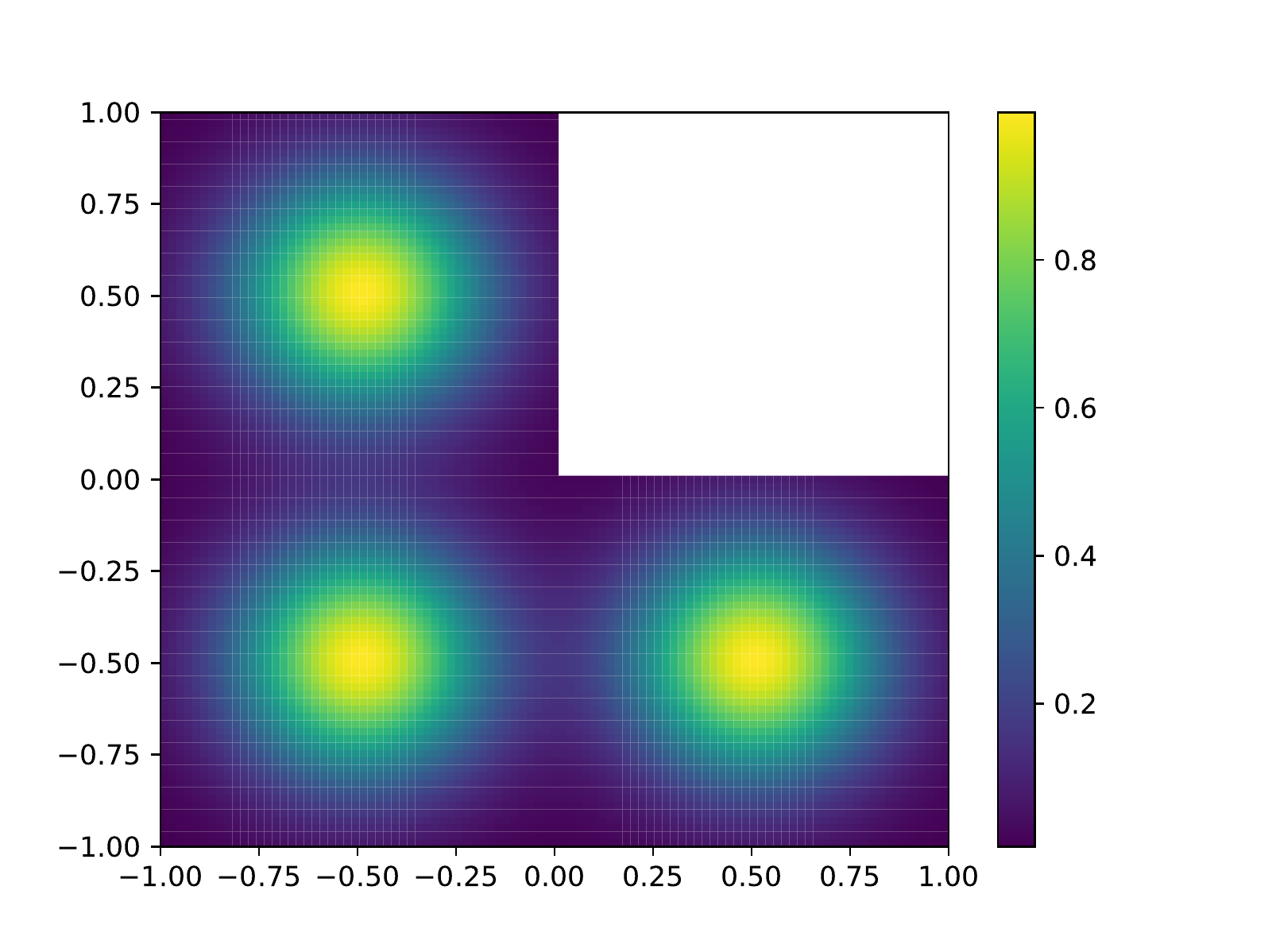}~
\includegraphics[width=0.5\textwidth,keepaspectratio]{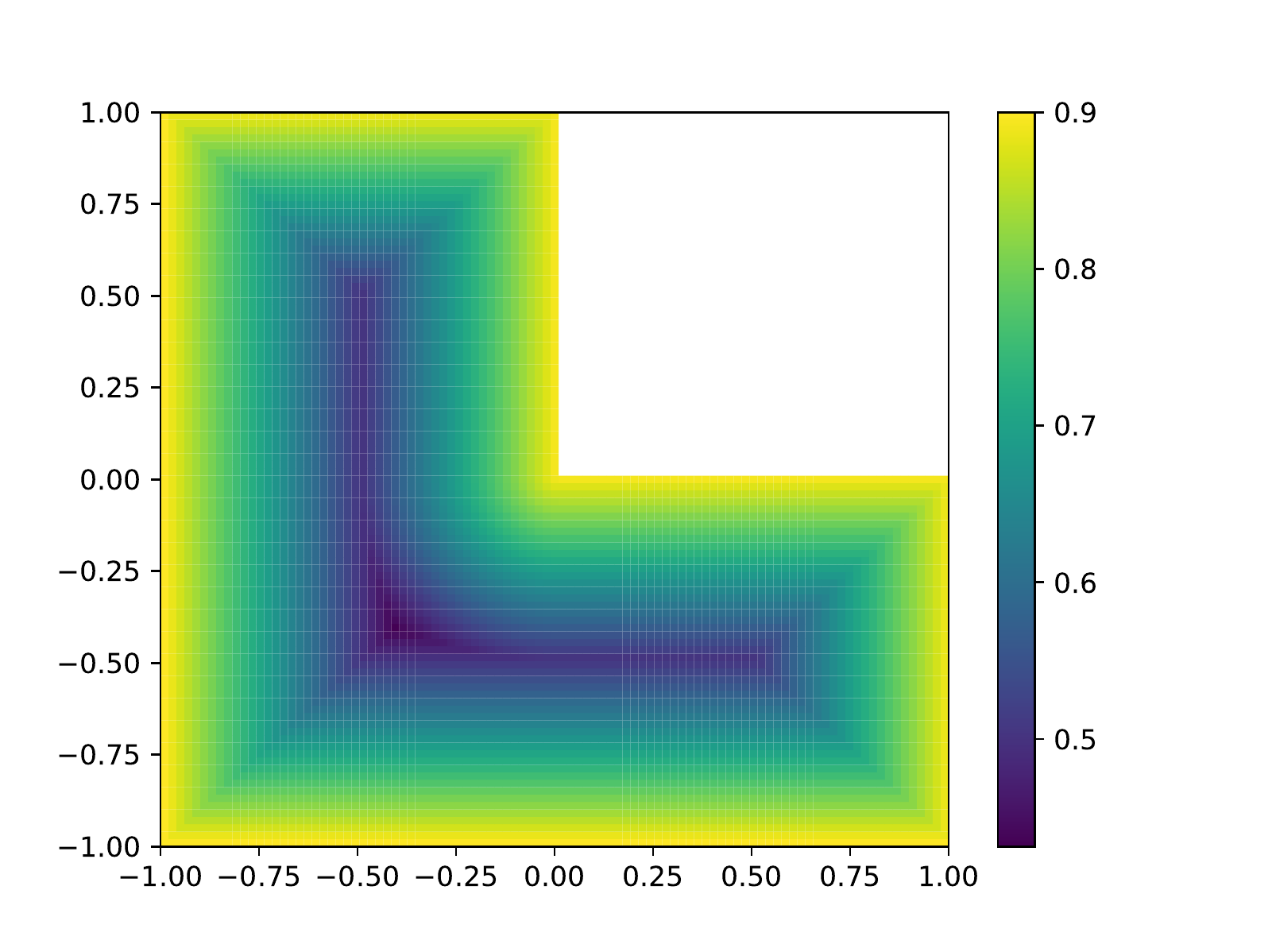}
\caption{Left: The source function $f(x)$ used in the model \cref{equ:2dmodel}; right: the variable fractional index $s(x)$ used in the same model.}
\label{fig:fs}
\end{figure}

We apply the numerical discretization proposed in \cref{sect:extension} with uniform grids and obtained a linear system as follows
\begin{equation}\label{equ:Axb}
	\bA \bu = \bbf
\end{equation}
Due to the non-locality of the fractional Laplacian operator, the stiffness matrix $\bA$ is a dense matrix. The dense LU method becomes infeasible as the problem size increases. An iterative solver becomes desirable in this situation. However, as the problem size becomes larger, the condition number becomes worse and iterative solvers without proper preconditioning converge very slowly for large scale problems. We proposed the $\mathcal{H}$-LU preconditioner and demonstrated its effectiveness for this problem. Our algorithm is able to find the preconditioner ``automatically'' given only the dense matrix $\bA$ and discretization point $\bx$. The users only have two parameters to tune: $\varepsilon_1$, which essentially determines the truncation threshold for low rank matrix representation, accuracy for $\mathcal{H}$-matrix construction; and $\varepsilon_2$, which determines the compression accuracy for low-rank matrix addition, accuracy for $\mathcal{H}$-LU. We use $\varepsilon_1=10^{-4}$ and $\varepsilon_2=10^{-10}$ for the following numerical experiments.

The algorithm will first reorder the system and divide the discretization points into groups so that the rows/columns corresponding points in the same group will be adjacent in the reordered algebraic system. The reordering is done recursively by K-means with two clusters. \Cref{fig:D40} shows groups of points after reordering by the K-means algorithm. Each color represents an individual group.

\begin{figure}[htpb]
\centering
\scalebox{1.0}{\input{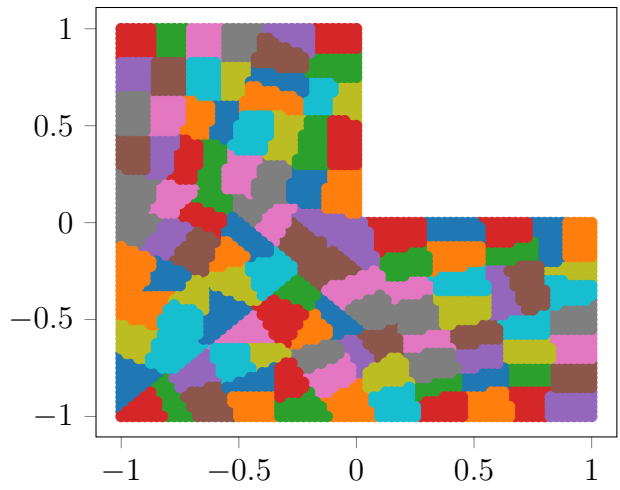}}
\caption{Groups of points after reordering by the K-means algorithm. Each color represents an individual group.}
\label{fig:D40}
\end{figure}

The choice of $\varepsilon_1$ is very important since it controls the tradeoff between accuracy and construction~(and LU) cost for the $\mathcal{H}$-matrix representation. \Cref{fig:M40} shows the constructed $\mathcal{H}$-matrix $H$ for a $4961\times 4961$ matrix and $\varepsilon_1=10^{-4}$. 

\begin{figure}[htpb]
\centering
\includegraphics[width=0.8\textwidth,keepaspectratio]{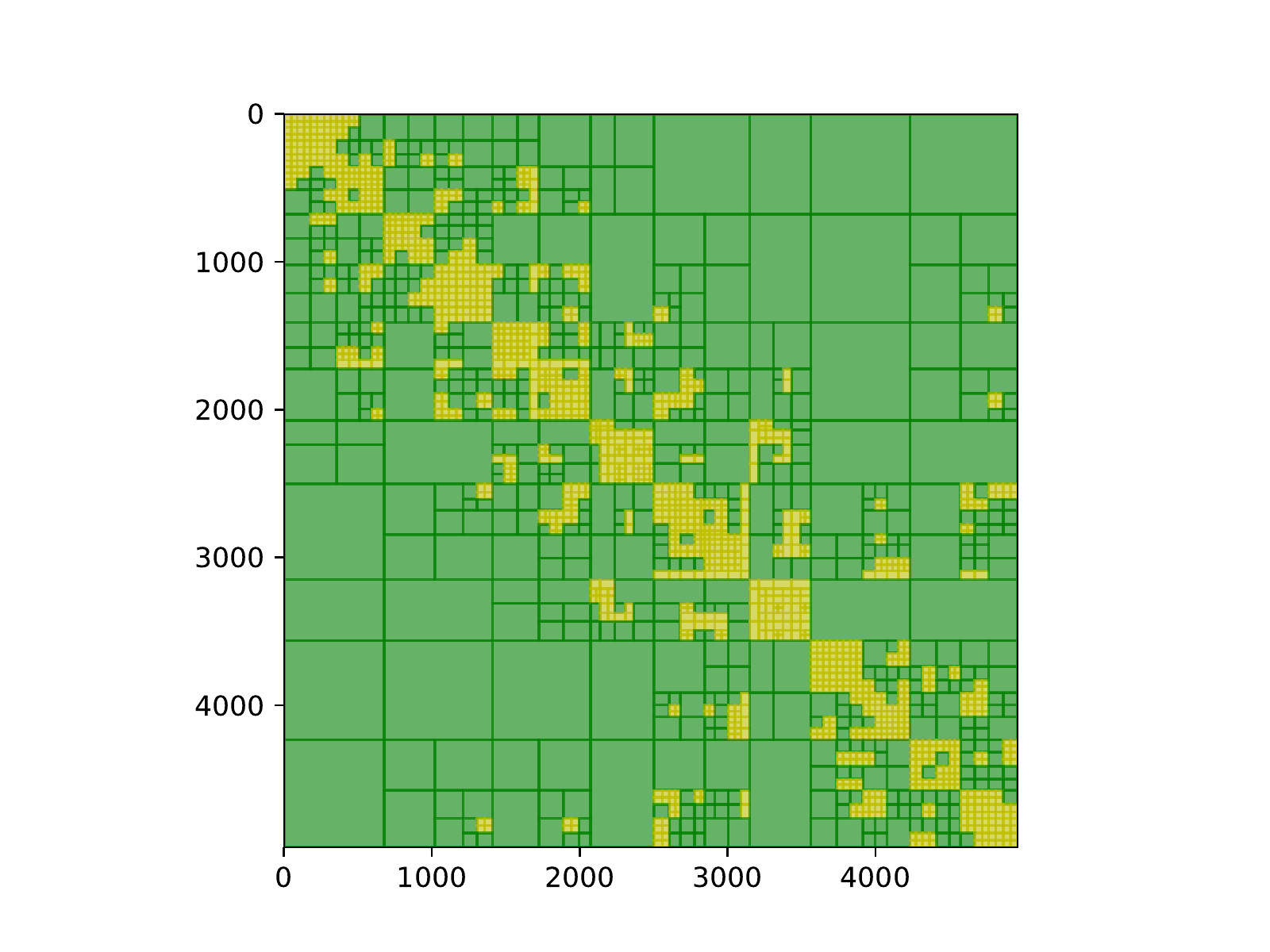}
\caption{$\mathcal{H}$-matrix for a $4961\times 4961$ matrix and $\varepsilon_1=10^{-4}$.}
\label{fig:M40}
\end{figure}

We consider solving \cref{equ:Axb} with preconditioner $H^{-1}$~(after LU factorization) and without. In \Cref{fig:iter40}, the left plot shows the convergence for these two scenarios. In both cases, we measure the error of the solution $\bx^k$ at $k$-th iteration by the relative error formula
\begin{equation}
	e_k = \frac{\|\bA \bx^k - \bbf\|_2}{\|\bbf\|_2}
\end{equation}
 On the right, we also compare the wall time for $\bA^{-1}\bbf$ and $H^{-1}\bbf$. For fairness, $\bA$ is first factorized. The comparison shows that the $\mathcal{H}$-LU preconditioner is also much more efficient than LU preconditioner, especially for large-scale problems and the cases where we need to solve for many different $\bbf$'s.

 \begin{figure}[htpb]
\centering
\scalebox{0.8}{\input{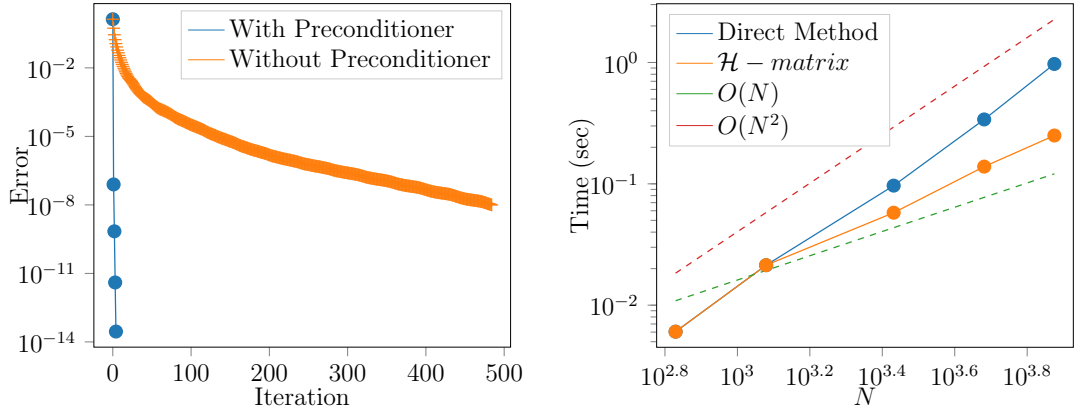}}~
\scalebox{0.8}{
\begin{tikzpicture}

\definecolor{color0}{rgb}{0.12156862745098,0.466666666666667,0.705882352941177}
\definecolor{color1}{rgb}{1,0.498039215686275,0.0549019607843137}
\definecolor{color2}{rgb}{0.172549019607843,0.627450980392157,0.172549019607843}
\definecolor{color3}{rgb}{0.83921568627451,0.152941176470588,0.156862745098039}

\begin{axis}[
legend cell align={left},
legend entries={{Direct Method},{$\mathcal{H}-matrix$},{$O(N)$},{$O(N^2)$}},
legend style={at={(0.03,0.97)}, anchor=north west, draw=white!80.0!black},
tick align=outside,
tick pos=left,
xlabel={$N$},
ylabel={Time (sec)},
x grid style={white!69.01960784313725!black},
xmin=598.433501631519, xmax=8459.58654754124,
xmode=log,
y grid style={white!69.01960784313725!black},
ymin=0.00449806245914075, ymax=3.04829194209777,
ymode=log
]
\addlegendimage{no markers, color0}
\addlegendimage{no markers, color1}
\addlegendimage{no markers, color2}
\addlegendimage{no markers, color3}
\addplot [semithick, color0, mark=*, mark size=3, mark options={solid}]
table [row sep=\\]{%
675	0.00604935 \\
1200	0.0213585 \\
2700	0.0966806 \\
4800	0.339401 \\
7500	0.972231 \\
};
\addplot [semithick, color1, mark=*, mark size=3, mark options={solid}]
table [row sep=\\]{%
675	0.00604935 \\
1200	0.0213585 \\
2700	0.0578209 \\
4800	0.138662 \\
7500	0.250155 \\
};
\addplot [semithick, color2, dashed]
table [row sep=\\]{%
675	0.0108796409922398 \\
1200	0.0193415839862041 \\
2700	0.0435185639689592 \\
4800	0.0773663359448165 \\
7500	0.120884899913776 \\
};
\addplot [semithick, color3, dashed]
table [row sep=\\]{%
675	0.0183593941744047 \\
1200	0.0580247519586123 \\
2700	0.293750306790475 \\
4800	0.928396031337798 \\
7500	2.2665918733833 \\
};
\end{axis}

\end{tikzpicture}}
\caption{Left: the convergence for these two scenarios; right: the wall time for $\bA^{-1}\bbf$ and $H^{-1}\bbf$. For fairness, $\bA$ is first factorized.}
\label{fig:iter40}
\end{figure}

Finally, we show the solution error of approximating $\bA$ by $H$. The error is computed using 
\begin{equation}\label{equ:relative_error}
	e = \frac{\|H^{-1}\bbf - \bA^{-1} \bbf\|_2}{\|\bA^{-1} \bbf\|_2}
\end{equation}
\Cref{fig:sol100} shows the solution $H^{-1}\bbf$ with $100$ points per dimension and relative error \cref{equ:relative_error} agains different problem sizes. We can see that the relative error remains stable and does not increase much as problem size increases, which demonstrates the validity of the $\mathcal{H}$-matrix approximation. 

 \begin{figure}[htpb]
\centering
\includegraphics[width=0.48\textwidth]{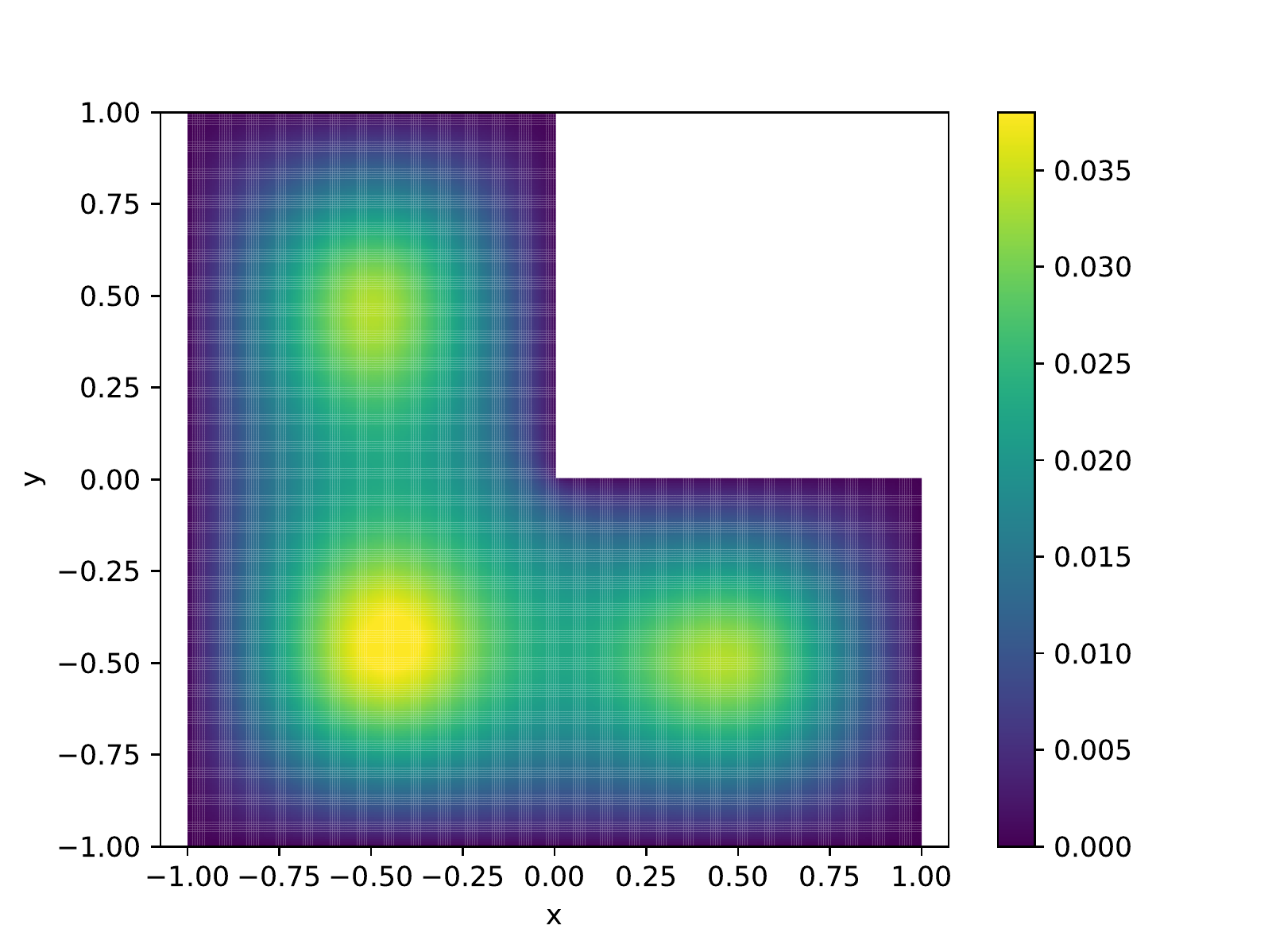}
\scalebox{0.8}{
\begin{tikzpicture}

\definecolor{color0}{rgb}{0.12156862745098,0.466666666666667,0.705882352941177}

\begin{axis}[
tick align=outside,
tick pos=left,
x grid style={white!69.01960784313725!black},
xlabel={$N$},
xmin=333.75, xmax=7841.25,
y grid style={white!69.01960784313725!black},
ylabel={Relative Error},
ymin=0, ymax=0.005
]
\addplot [semithick, color0, mark=*, mark size=3, mark options={solid}, forget plot]
table [row sep=\\]{%
675	0.00135189 \\
1200	0.00167465 \\
2700	0.00180417 \\
4800	0.00133042 \\
7500	0.0016748 \\
};
\end{axis}

\end{tikzpicture}}
\caption{Left: the solution $H^{-1}\bbf$ with $100$ points per dimension; right: relative error \cref{equ:relative_error} for different problem sizes}
\label{fig:sol100}
\end{figure}

\paragraph{Parallel Assembling}

Since the stiffness matrix can be computed independently and therefore embarrassingly parallelizable. We take advantage of the built-in distributed computing features of \texttt{julia} and assemble the stiffness matrix in parallel\footnote{We used the functions \texttt{remotecall} and \texttt{fetch} for master-worker communication.}. First, the mesh is split into 30 patches~(using K-means or randomly); then each worker is in charge of computing the coefficients for the corresponding rows~(there is a one-to-one correspondence between points on the grids and rows in the matrix). The results are sent to the master machine and assembled into a large dense coefficient matrix. \Cref{fig:parallel} shows the parallel pipeline for assembling the stiffness matrix in \cref{equ:2dmodel}.  A good balance should be struck between data exchange and computation workload. There are opportunities for construction of the $\mathcal{H}$-matrix on the fly and in parallel given the patches; it will be left for future research.

 \begin{figure}[htpb]
\centering
\includegraphics[width=0.8\textwidth]{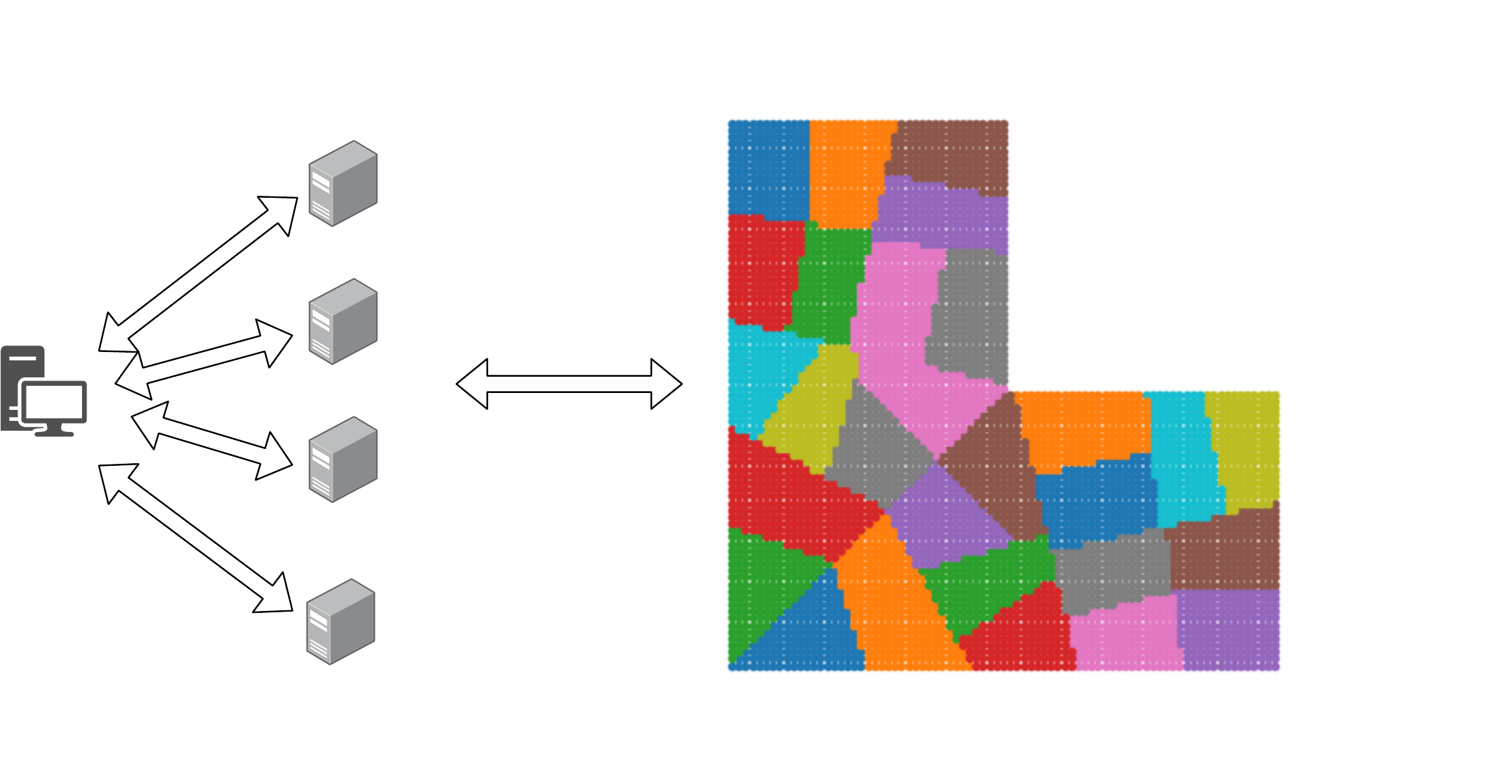}
\caption{The parallel pipeline for assembling the stiffness matrix in \cref{equ:2dmodel}.}
\label{fig:parallel}
\end{figure}

\section{Conclusion}

In this paper, we presented the $\mathcal{H}$-matrix solver for the convection diffusion equation driven by the L\'evy process. We consider both semi-heavy L\'evy measure $\nu(y)<\infty$ as well as the challenging case $\nu(y)\rightarrow 0$, $y\rightarrow 0$, and $\nu(y)$ decays only algebraically. Particularly, when $\nu(y) = \frac{c_{1,s}}{|y|^{1+2s}}$, we recover the so-called fractional Laplacian operator $(-\Delta)^s u(x) =\mathrm{p.v.} \int_{\RR} (u(x+y)-u(x))\nu(y)dy$. In the case $\nu(y)$ is smooth for large $y$, the corresponding coefficient matrices in the explicit or implicit scheme can be efficiently represented by $\mathcal{H}$-matrix. We implemented $\mathcal{H}$-LU and use it as a preconditioner or a direct solver for the convection diffusion equation. Numerical methods demonstrate that the $\mathcal{H}$-matrix is highly efficient compared to the dense matrices for these tasks. 

The algorithms proposed in this paper can also be easily generalized to higher dimensions. To demonstrate, we also present the two-dimensional cases in this paper, which also shows an advantage over direct methods, especially for large-scale problems. 

The convection-diffusion equation or other counterparts driven by the L\'evy process is challenging due to the non-locality of the jump diffusion. This will lead to dense coefficients matrices which makes computation prohibitive for large-scale problems. However, the main finding in this paper shows that by adopting the well-established $\mathcal{H}$-matrix technique, large-scale simulation becomes possible and efficient. Particularly, we have applied the proposed algorithm to solve a variable index fractional Poisson equation, which shows the accuracy and efficiency of the algorithm. 

The code for the paper is available from the authors upon request.

\begin{appendices}
\lib{L\'evy Process \label{sect:levy}}
In this section, we review the basics of the L\'evy process. For general treatment on this topic, refer to \cite{doney2007introduction,papapantoleon2008introduction,sato1999levy}

Consider a given probability space $(\Omega, \mathcal{F}, P)$. A L\'evy process $\{X_t\}_{t\geq 0}$ taking values in $\RR^d$ is defined as a stochastic process with stationary and independent increments. In addition, we assume $X_0=0$ with probability 1. 

By independent, we mean for any distinct time $0\leq t_1<t_2<\ldots<t_n$, we have $X_{t_1}, X_{t_2-t_1}, \ldots, X_{t_n}-X_{t_{n-1}}$ are all independent. 

By stationary,  for any $0\leq s < t <\infty$, the probability distribution of $X_t-X_s$ is the same as $X_{t-s}$.

One remarkable property of the L\'evy process is that any L\'evy process has a specific form of the characteristic function, called \textit{L\'evy-Khintchine formula}
\begin{equation}
    \mathbb{E}(e^{\ii (\xi, X_t)}) = e^{t\eta(\xi)}
\end{equation}
where
\begin{equation}\label{equ:eta}
    \eta(\xi) = \ii (b, \xi) - \frac{1}{2}(\xi, a\xi) + \int_{\RR^d\backslash \{0\}} \left[ e^{\ii (\xi, \by)}-1-\ii (\xi, \by)\mathbf{1}_{0<|\by|<1}(\by) \right]d\by
\end{equation}
here $b\in \RR^d$, $a$ is a positive definite symmetric matrix in $\RR^{d\times d}$, and $\nu$ is a L\'evy measure which satisfies
\begin{equation}
    \int_{\RR^d\backslash \{0\}} \min\{1, |\by|^2\}\nu(d\by)<\infty
\end{equation}

In the case $\nu\equiv 0$, we obtain the Gaussian process. In the case $\int_{\RR^d} \left[ e^{\ii (\xi, \by)}-1\right]dy$ is well defined, we can omit the term $\ii (\xi, \by)\mathbf{1}_{0<|\by|<1}(y)$. 

In the case $\nu<\infty$, the L\'evy process has the decomposition
\begin{equation}
    X_t = bt + \sqrt{a}B_t + \sum_{0\leq s\leq t} J_s 
\end{equation}
where $J_s$ is the jump at time $s$. To be precise, define 
\begin{equation}
    N(t, A) = \#\{0\leq s\leq t: J_s\in A\}
\end{equation}
if $t$ and $A$ is fixed, $N(t, A)$ is a random variable; if $t$ and $w\in \Omega$ is fixed, $N(t, \cdot)(w)$ is a measure; if $A$ is fixed, $N(\cdot, A)$ is a Poisson process with intensity $\nu(A)$. Therefore, we can also write
\begin{equation}
    \sum_{0\leq s\leq t} J_s = \int_{\RR^d-\{0\}} \bx N(t, d\bx)
\end{equation}

To end this section, we provide a third view of the L\'evy process. Consider the semigroup 
\begin{equation}
    (T_tf)(\bx) = \mathbb{E}(f(X_t+\bx))
\end{equation}
Then the infinitesimal generator will have the form
\begin{multline}\label{equ:Af}
    (Af)(\bx) =  b^i(\partial_i f)(\bx) + \frac{1}{2}a^{ij}(\partial_i \partial_j f)(\bx) +\\
    \int_{\RR^d\backslash\{0\}} [f(\bx+\by)-f(\bx)-\by\cdot(\nabla f)(\bx) \mathbf{1}_{0<|\by|<1}(\by)]\nu(d\by)
\end{multline}

\begin{remark}
    Another definition of the infinitesimal generator is through the Fourier transform 
    \begin{equation}
        (Af)(x) = \lim_{t\rightarrow 0+} \frac{P_t f-f}{t}
    \end{equation}
    where $P_tf = f\star p_t$ and $\mathcal{F}p_t(\xi) = e^{-t\eta(\xi)}$. 
    
    To see this, consider the case and without the adjustment term $-\by\cdot(\nabla f)(\bx) \mathbf{1}_{0<|\by|<1}(\by)$. By taking the Fourier transform of $(Af)(\bx)$, we have 
    \begin{multline}\label{equ:f}
        \mathcal{F}(Af)(\xi) = \ii\xi (b,\xi) f(\xi ) - \frac{1}{2}(a, a\xi)\hat f(\xi ) + \int_{\RR^d} {(\hat f(\xi ){e^{\ii \by\xi }} - \hat f(\xi ))\nu (\by)d\by} \\
        =\left( {\ii(b,\hat\xi) - \frac{1}{2}(\xi, a\xi) + \int_{\RR^d}{({e^{\ii (\by,\xi) }} - 1)\nu (\by)d\by} } \right)\hat f(\xi )
    \end{multline}
    this is exactly the expression we see in \cref{equ:eta}. 
    
    One the other hand, 
    \[\mathcal{F}\left[\mathop {\lim }\limits_{t \to 0 + } \frac{{{P_t}f - f}}{t}\right] = \mathop {\lim }\limits_{t \to 0 + } \frac{{{e^{\eta (\xi )t}}\hat f(\xi ) - \hat f(\xi )}}{t} = \eta (\xi )\]
    which coincides with \cref{equ:f}. 
    
\end{remark}

Let $T_t$ be the semigroup associated with the L\'evy process, and the associated infinitesimal generator is
\begin{multline}\label{equ:Af2}
    (Af)(\bx) =  c(\bx) f(\bx) + b^i(\partial_i f)(\bx) + \frac{1}{2}a^{ij}(\partial_i \partial_j f)(\bx) +\\
    \int_{\RR^d\backslash\{0\}} [f(\bx+\by)-f(\bx)-\by\cdot(\nabla f)(\bx) \mathbf{1}_{0<|\by|<1}(\by)]\nu(d\by)
\end{multline}

 we consider the transition measures $p_t(\bx)$ associated with $T_t$. Here $p_t$ is absolutely continuous with respect to Lebesgue measure. Define the adjoint operator $A^*$ of $A$, which satisfies
 \begin{equation}
     \int_{\RR^d} (Af)(\by) p_t(\by)d\by=\int_{\RR^d} f(\by)A^*p_t(\by)d\by
 \end{equation}
 for all $f\in C_c^\infty(\RR^d)$.

 In general, there is no nice form for $A^*$. However, in the case $c(\bx)$, $b(\bx)$ and $a(\bx)$ are all constant, we have
 \begin{multline}
     A^*p_t(\bx) = c (\partial_i p_t)(\bx) - b^i (\partial_i p_t)(\bx) + \frac{1}{2}a^{ij}\partial_i\partial_j p_t +\\
     \int_{\RR^d}\left[ p_t( \bx-\by) -p_t(\bx)+\by\cdot(\nabla p_t)(\bx) \mathbf{1}_{0<|\by|<1}(\by) \right]\nu(d\by)
 \end{multline}
 
 The Fokker-Planck equation, or Kolmogorov forward equation, is~\cite{sun2012fokker}
 \begin{equation}
     \frac{\partial p_t(\bx)}{\partial t} = A^*p_t(\bx)\quad p_0(\bx) = \delta(\bx)
 \end{equation}

\lib{$\mathcal{H}$ Matrix \label{sect:hmat}}

For completeness, we review the hierarchical matrix technique. For a comprehensive treatment of the $\mathcal{H}$-matrix, refer to \cite{borm2003introduction,bebendorf2008hierarchical,yang2008construction}. Especially we give a detailed description on the storage format, construction, fast matrix-vector multiplication routine, and LU decomposition. We later show how to construct the $\mathcal{H}$ matrix from kernels. 

The discretization of the jump-diffusion part $\int_\RR (u(x+y)-u(y))\nu(y)dy$ will usually lead to a dense matrix, which typically requires $\mathcal{O}(N^2)$ storage and has $\mathcal{O}(N^2)$ complexity for matrix-vector multiplication, $\mathcal{O}(N^3)$ for LU decomposition. Many techniques, such as the panel clustering methods and the fast multipole methods were developed. Later $\mathcal{H}$-matrix was considered by W.~Hackbusch, and many variations of hierarchical matrices have been intensively studied by researchers. $\mathcal{H}$-matrices can reduce the storage and arithmetics to nearly optimal complexity $\mathcal{O}(N)$ up to $\log N$ scaling. It relies on the fact that the kernel functions are smooth in the off-diagonal. 

\subsection{Construction and Storage}\label{equ:cons}

The construction of the $\mathcal{H}$ matrices can be best described in terms of matrix indices and the geometric points. Each entry $A_{ij}$ represents the interaction between two nodes $x_i$ and $x_j$. Let $I$, $J\subset \mathbb{N}$ be row and column index sets, then $A_{IJ}=(a_{ij})_{i\in I, j\in J}$ describes the interaction between a cluster $X_I = \{x_i\}_{i\in I}$ and another cluster $X_J = \{x_j\}_{j\in I}$. The interaction kernel function $k(x,y)$ is assumed to be smooth for sufficiently large $|x-y|$. 

Typically, it requires $\mathcal{O}(|I||J|)$ complexity to store the interaction data. However, if we assume that $I\subset J=\emptyset$ and geometrically the clusters $X_I$, $X_J$ are separate in the sense of admissibility, the cost can be reduced. 

\begin{definition}
 For two sets of indices $I$ and $J$ and the associated cluster $X_I$, $X_J$; assume that the kernel is asymptotically smooth, the admissibility condition is given by
    \begin{equation}\label{equ:adm}
        \min\{\mathrm{diam}(X_I),\mathrm{diam}(X_J) \}\leq \eta \mathrm{dist}(X_I,X_J)
    \end{equation}
    where $\mathrm{diam}(X_I) = \max_{x_i,x_j\in X}|x_i-x_j|$ AND $\mathrm{dist}(x_i,x_j)=\min_{x_i\in X_I, x_j\in X_J}|x_i-x_j|$. If the condition \cref{equ:adm} is not satisfied, we say $X_I$ and $X_J$ or $I$ and $J$ are inadmissible. 
\end{definition}

In our numerical examples, we use $\eta=1$, which indicates adjacent clusters are inadmissible since the distance is always zero. 

The admissible blocks usually have low rank structures. This is best illustrated by an example. Suppose $k(x,y)=\frac{1}{|x-y|^2}$, and further assume $x\in X_I$, $y\in X_J$. Assume $X_I$ and $Y_J$ are inadmissible, and $\bar x\in \mathcal{X}$, where $\mathcal{X}$ is the convex hull of $X_I$. Then we have
\[\frac{1}{{|x - y{|^2}}} = \frac{1}{{|x - \bar x - (y - \bar x){|^2}}} = \frac{1}{{|y - \bar x{|^2}{{\left| {\frac{{x - \bar x}}{{y - \bar x}} - 1} \right|}^2}}}\]

Since 
\[\left| {\frac{{x - \bar x}}{{y - \bar x}}} \right| < 1\]
we have 
\[\frac{1}{{|y - \bar x{|^2}{{\left| {\frac{{x - \bar x}}{{y - \bar x}} - 1} \right|}^2}}} = \frac{1}{{|y - \bar x{|^2}}}\left( {1 + \frac{{x - \bar x}}{{y - \bar x}} + {{\left( {\frac{{x - \bar x}}{{y - \bar x}}} \right)}^2} +  \ldots } \right)\]
which is in the form of 
\begin{equation}
    \frac{1}{{|x - y{|^2}}} = \sum_{n=0}^\infty \alpha_n(x-\bar x)\beta_n(y-\bar x)
\end{equation}

Then the series is convergent, and therefore the residual term will decay. It is possible to approximate $\frac{1}{{|x - y{|^2}}}$ with a few terms
\begin{equation}
    \frac{1}{{|x - y{|^2}}} \approx \sum_{n=0}^r \alpha_n(x-\bar x)\beta_n(y-\bar x)
\end{equation}
And therefore the interaction matrix for the cluster $A_{IJ}$ is 
\begin{equation}
    A_{IJ} = \left(\frac{1}{{|x_i - x_j{|^2}}}\right)_{i\in I, j\in J} =\left( \sum_{n=0}^r \alpha_n(x_i-\bar x)\beta_n(x_j-\bar x)\right)_{i\in I, j\in J} = UV'
\end{equation}
where
\begin{equation}
    U = \begin{bmatrix}
        \alpha_0(x_i-\bar x)& \alpha_1(x_i-\bar x)&\ldots & \alpha_r(x_i-\bar x)
    \end{bmatrix}
\end{equation}
\begin{equation}
    V = \begin{bmatrix}
        \beta_0(x_i-\bar x)& \beta_1(x_i-\bar x)&\ldots & \beta_r(x_i-\bar x)
    \end{bmatrix}
\end{equation}
If $r\ll |I|\wedge |J|$, we have achieved matrix compression using a low rank representation. 

The idea of the hierarchical matrix is then to classify each block $A_{IJ}$ into three types
\begin{itemize}
\item Full matrix. In this case, $A_{IJ}$ is represented using fully populated matrices.
\item Low-rank matrix. In the case $I$ and $J$ are admissible, we can store the block $A_{IJ}$ in the form of low-rank matrices. This will help us save storage and computational cost.
    \item $\mathcal{H}$-matrix. For the blocks that are neither low-rank matrix nor small enough to become a full matrix, it is further divided into sub-blocks~(for example, via quadtree structure). 
\end{itemize}

The $\mathcal{H}$-matrix will be stored in a hierarchical format, and there exist three kinds of sub-blocks
\begin{itemize}
    \item Full dense blocks. These blocks cannot be represented as the low-rank block and cannot be subdivided because its size is smaller than a pre-assigned threshold.
    \item Low-rank blocks. These blocks are stored using low-rank factorizations. Note low-rank blocks can also be dense blocks. We have abused the terminology here, but it can be easily figured out from context.
    \item Hierarchical blocks, or $\mathcal{H}$-blocks. These blocks do not have low-rank factorization, but their sizes are so large that they can be further subdivided into new blocks.
\end{itemize}
The hyper-parameters we need to decide on the $\mathcal{H}$-matrix construction are
\begin{itemize}
    \item The minimum block size $N_{\min}$. It defined the minimum block size we can have for the sub-blocks in the $\mathcal{H}$-block. This indicates that if a $N\times N$ matrix is not a low-rank matrix, where $N\leq N_{\min}$, we should store this sub-block in the format of a dense matrix.
    \item The maximum block size $N_{\max}$. It defines the maximum sub-block size we can have. For convenience, we define it in terms of $N_{\mathrm{block}}:=\left\lceil\frac{N}{N_{\max}}\right\rceil$, where $N$ is the matrix dimension.
\end{itemize}

Typically, $N_{\min}=64$ is a good choice to exploit the efficient dense linear algebra provided by LAPACK/BLAS. In addition, $N_{\mathrm{block}}=4$ or $8$ are good empirical choices.

\subsection{Matrix Vector Multiplication}

One advantage of the $\mathcal{H}$ matrix is that the matrix-vector multiplication is cheap. The matrix-vector multiplication of $\mathcal{H}$-matrix can be described through the rule of the operator for three different kinds of sub-blocks
\begin{itemize}
    \item Full matrix. In this case, the normal dense matrix-vector multiplication is used. 
    \item Low-rank matrix. The operator can be carried out quite efficiently via
    \begin{equation}
        (UV')x = U(V'x)
    \end{equation}
    note $V'x$ is a $r\times 1$ vector.
    \item $\mathcal{H}$-matrix. If the sub-block is 
    \begin{equation}
        B = \begin{bmatrix}
            B_{11}&B_{12}\\
            B_{21}&B_{22}
        \end{bmatrix}\quad x = \begin{bmatrix}
            x_1\\
            x_2
        \end{bmatrix}
    \end{equation}
    the matrix vector multiplication will be carried out recursively, i.e.
    \begin{equation}
        Bx = \begin{bmatrix}
            B_{11} x_1 + B_{12}x_2\\
            B_{21} x_1 + B_{22}x_2
        \end{bmatrix}
    \end{equation}
\end{itemize}

\subsection{LU Decomposition}

$\mathcal{H}$-LU can be done in $\mathcal{H}$-matrix format and recursively in computational cost $\mathcal{O}(N)$ up to a $\log N$ scaling compared to dense LU in $\mathcal{O}(N^3)$.

We need to define a triangular solver which solves $AX=B$ for lower triangular matrix or $XA = B$ or upper triangular matrix. The matrices are either $\mathcal{H}$-matrix or full matrix. We only need to consider the lower triangular cases since in the latter case by transposition $A'X'=B'$; we reduce the problem to the former. 

The triangular solver will work differently for different situations.
\begin{itemize}
    \item If $B$ is a full matrix, then $X$ is a full matrix and $X=A^{-1}B$. Here $A$ is converted to a full matrix.
    \item If $B$ is a low rank matrix, $B=B_1B_2'$, then $X$ is also a low rank matrix $X = (A^{-1}B_1)B_2'$.
    \item If $A$ and $B$ are both hierarchical matrices
    \[\left[ {\begin{array}{*{20}{c}}
{{A_{11}}}&{}\\
{{A_{21}}}&{{A_{22}}}
\end{array}} \right]\left[ {\begin{array}{*{20}{c}}
{{X_{11}}}&{{X_{12}}}\\
{{X_{21}}}&{{X_{22}}}
\end{array}} \right] = \left[ {\begin{array}{*{20}{c}}
{{B_{11}}}&{{B_{12}}}\\
{{B_{21}}}&{{B_{22}}}
\end{array}} \right]\]
Then we will first solve $A_{11}X_{11}=B_{11}$ and $A_{11}X_{12}=B_{12}$. Then we solve
\begin{equation}
    A_{22}X_{21} = B_{21}-A_{21}X_{11}\quad A_{22}X_{22} = B_{22}-A_{21}X_{12}
\end{equation}    
\end{itemize}

The LU decomposition also works differently for different types of matrices. Again only full matrices and $\mathcal{H}$ matrices are considered. 

For full matrices, the standard dense LU is adopted. For $\mathcal{H}$ matrices, 
\[\left[ {\begin{array}{*{20}{c}}
{{A_{11}}}&{{A_{12}}}\\
{{A_{21}}}&{{A_{22}}}
\end{array}} \right] = \left[ {\begin{array}{*{20}{c}}
{{L_{11}}}&\\
{{L_{21}}}&{{L_{22}}}
\end{array}} \right]\left[ {\begin{array}{*{20}{c}}
{{U_{11}}}&{{U_{12}}}\\
{}&{{U_{22}}}
\end{array}} \right]\]

The algorithm will work as follows
\begin{itemize}
    \item LU decomposition of $A_{11}=L_{11}U_{11}$
    \item Triangular solve $L_{11}U_{12}=A_{12}$~(lower triangular, $U_{12}$ is the unknown)
    \item Triangular solve $L_{21}U_{11} = A_{21}$~(upper triangular, $L_{21}$ is the unknown)
    \item LU decomposition of $A_{22}-L_{21}U_{12}=L_{22}U_{22}$
\end{itemize}    

The LU decomposition can also be performed in an in-place way, which will save storage.

\section{Proof of \Cref{lemma:glemma}}\label{sect:proof}

Note that 
    \begin{align}
        &|{e^{ - {\varepsilon ^2}{{(x - y)}^2}}} - \sum\limits_{n = 0}^r {{\alpha _n}} (x - \bar x){\beta _n}(y - \bar x)|\\
         =&  {e^{ - {\varepsilon ^2}{t^2} - {\varepsilon ^2}t_0^2}}\left( {\frac{{{{(2{\varepsilon ^2}{t_0}t)}^{n + 1}}}}{{(n + 1)!}} + \frac{{{{(2{\varepsilon ^2}{t_0}t)}^{n + 2}}}}{{(n + 2)!}} +  \ldots } \right)\\
        \le& {e^{ - {\varepsilon ^2}{t^2} - {\varepsilon ^2}t_0^2}}\frac{{{{(2{\varepsilon ^2}{t_0}t)}^{n + 1}}}}{{(n + 1)!}}{e^{2{\varepsilon ^2}{t_0}t}} \le {e^{2{\varepsilon ^2}{D^2}}}\frac{{{{(2{\varepsilon ^2}{D^2})}^{n + 1}}}}{{(n + 1)!}}
    \end{align}
    We invoke the basic estimate
    \begin{equation}
        n!>\left( \frac{n}{3} \right)^{n}
    \end{equation} 
    and obtain
    \begin{equation}
        |{e^{ - {\varepsilon ^2}{{(x - y)}^2}}} - \sum\limits_{n = 0}^r {{\alpha _n}} (x - \bar x){\beta _n}(y - \bar x)|\leq {e^{2{\varepsilon ^2}{D^2}}}\frac{{{{(2{\varepsilon ^2}{D^2})}^{n + 1}}}}{{{{\left( {\frac{{n + 1}}{3}} \right)}^{n + 1}}}} = {\left( {\frac{{6{\varepsilon ^2}{D^2}}}{{n + 1}}} \right)^{n + 1}}{e^{2{\varepsilon ^2}{D^2}}}
    \end{equation}
    
    Since we have \cref{equ:r}, which indicates 
    \begin{equation}
        {\frac{{6{\varepsilon ^2}{D^2}}}{{n + 1}}}<\frac{1}{2}
    \end{equation}
    and therefore
    \begin{equation}
        |{e^{ - {\varepsilon ^2}{{(x - y)}^2}}} - \sum\limits_{n = 0}^r {{\alpha _n}} (x - \bar x){\beta _n}(y - \bar x)| < {\left( {\frac{1}{2}} \right)^{n + 1}}{e^{2{\varepsilon ^2}{D^2}}} < \delta 
    \end{equation}
    the last equation is due to the assumption \cref{equ:r}.

\end{appendices}

\bibliographystyle{mybib}
\bibliography{levy.bib}
\end{document}